\documentclass[a4paper, twoside]{scrartcl}
\usepackage{etex}
\usepackage[a4paper, top=88pt, bottom=88pt, left=72pt, right=72pt, headsep=16pt, footskip=28pt]{geometry}
\usepackage{amsmath, amssymb, amsthm, xcolor, tabularx, graphics}

\usepackage[hyphens]{url}
\usepackage{hyperref}
\hypersetup{
	breaklinks=true,
	colorlinks=true,
	linkcolor=blue,
	citecolor=blue,
	urlcolor=blue,
}
\usepackage[nameinlink]{cleveref}

\usepackage{enumitem, moreenum}
\setlist{nosep}
\usepackage{mleftright} \mleftright
\renewcommand{\qedsymbol}{$\blacksquare$}
\renewenvironment{proof}[1][\proofname]{\noindent{\bfseries\sffamily #1.} }{\hfill\qedsymbol\medskip}
\usepackage[labelfont={sf,bf}]{caption}
\usepackage{scrlayer-scrpage, xhfill}
\automark[section]{section}
\setkomafont{pageheadfoot}{\normalcolor\sffamily}
\setkomafont{pagenumber}{\normalfont\normalsize\sffamily}
\clearpairofpagestyles
\let\oldtitle\title
\renewcommand{\title}[1]{\oldtitle{#1}\newcommand{\theshorttitle}{#1}}
\newcommand{\shorttitle}[1]{\renewcommand{\theshorttitle}{#1}}
\let\oldauthor\author
\renewcommand{\author}[1]{\oldauthor{#1}\newcommand{\theshortauthor}{#1}}
\newcommand{\shortauthor}[1]{\renewcommand{\theshortauthor}{#1}}
\cohead{\xrfill[0.525ex]{0.6pt}~\theshorttitle~\xrfill[0.525ex]{0.6pt}}
\cehead{\xrfill[0.525ex]{0.6pt}~\theshortauthor~\xrfill[0.525ex]{0.6pt}}
\cfoot*{\xrfill[0.525ex]{0.6pt}~\pagemark~\xrfill[0.525ex]{0.6pt}}
\newcommand{\theabstract}[1]{\par\bgroup\noindent\textbf{\textsf{Abstract.}} #1\egroup}
\newcommand{\thekeywords}[1]{\par\smallskip\bgroup\noindent\textbf{\textsf{Keywords.}}\newcommand{\and}{ $\bullet$ } #1\egroup}
\newcommand{\themsc}[1]{\par\smallskip\bgroup\noindent\textbf{\textsf{2020 Mathematics Subject Classification.}}\newcommand{\and}{ $\bullet$ } #1\egroup}

\newcommand*{\affilref}[1]{\ref{affiliation#1}}
\newcommand*{\affiliation}[3]{
	\footnotetext[#1]{\label{affiliation#2}#3}
}

\usepackage{natbib}
\setcitestyle{round}
\setlength{\bibsep}{0pt plus 0.3ex}

\newcommand*{\arXiv}[1]{\bgroup\color{blue}\href{https://arxiv.org/abs/#1}{arXiv:#1}\egroup}
\newcommand*{\doi}[1]{\bgroup\color{blue}\href{https://dx.doi.org/#1}{doi:#1}\egroup}
\newcommand*{\email}[1]{\bgroup\color{blue}\href{mailto:#1}{#1}\egroup}
\renewcommand*{\url}[1]{\bgroup\color{blue}\href{#1}{#1}\egroup}
\newcommand{\todo}[1]{\bgroup\color{red}\bfseries#1\egroup}

\usepackage{mathtools}
\newcommand*{\ACG}{\mathrm{ACG}}

\newcommand*{\Borel}[1]{\mathcal{B}(#1)}

\newcommand*{\defeq}{\coloneqq}
\newcommand*{\qefed}{\eqqcolon}

\newcommand*{\Hausdorff}{\mathcal{H}}

\newcommand*{\Naturals}{\mathbb{N}}
\newcommand*{\Normal}{\mathrm{N}}
\newcommand*{\Gam}{\mathrm{Gam}}
\usepackage{dsfont}
\newcommand*{\one}{\mathds{1}}
\newcommand*{\probson}[1]{\mathcal{P}(#1)}

\newcommand*{\PropKernel}{Q}
\newcommand*{\quark}{\setbox0\hbox{$x$}\hbox to\wd0{\hss$\cdot$\hss}}
\newcommand*{\rd}{\mathrm{d}}

\newcommand*{\RadProjToSphere}{\Pi^{\Sphere}}
\newcommand*{\Reals}{\mathbb{R}}
\newcommand*{\Sphere}{\mathbb{S}}

\newcommand*{\TV}{\mathrm{TV}}
\newcommand*{\Uniform}{\mathrm{U}}
\newcommand*{\XX}{\mathbb{X}}
\newcommand*{\YY}{\mathbb{Y}}
\newcommand*{\HH}{\mathbb{H}}
\newcommand*{\MM}{\mathbb{M}}

\newcommand*{\U}{\mathcal{U}}
\newcommand*{\V}{\mathcal{V}}

\renewcommand*{\geq}{\geqslant}

\renewcommand*{\leq}{\leqslant}

\newcommand*{\iidsim}{\mathrel{\stackrel{\textsc{iid}}{\sim}}}

\newcommand*{\Prob}{\mathbb{P}}
\DeclareMathOperator{\rg}{ran}

\newcommand{\widebar}{\bar}
\newcommand*{\xx}{\widebar{x}}
\newcommand*{\yy}{\widebar{y}}
\newcommand*{\KK}{\widebar{K}}
\newcommand*{\PPhi}{\widebar{\Phi}}
\newcommand*{\ev}[1]{\mathbb{E}\left[ #1 \right]}

\newcommand*{\e}{\mathrm{e}}
\DeclareMathOperator{\gap}{gap}

\newcommand*{\norm}[1]{\Vert #1 \Vert}

\newcommand*{\innerprod}[2]{\langle #1 , #2 \rangle}
\newcommand*{\absval}[1]{\vert #1 \vert}
\newcommand*{\set}[2]{\{ #1 \mid #2 \}}
\newcommand*{\bignorm}[1]{\bigl\Vert #1 \bigr\Vert}

\newcommand*{\Norm}[1]{\left\Vert #1 \right\Vert}

\newcommand*{\Absval}[1]{\left\vert #1 \right\vert}
\newcommand*{\Set}[2]{\left\{ #1 \,\middle\vert\, #2 \right\}}

\DeclareMathOperator*{\sspan}{span}
\newcommand*{\Span}[1]{\sspan \left\{ #1 \right\}}

\newtheorem{theorem}{\sffamily Theorem}[section]
\newtheorem{corollary}[theorem]{\sffamily Corollary}
\newtheorem{lemma}[theorem]{\sffamily Lemma}
\newtheorem{proposition}[theorem]{\sffamily Proposition}
\theoremstyle{definition}

\newtheorem{remark}[theorem]{\sffamily Remark}
\newtheorem{myAlgorithm}[theorem]{\sffamily Transition Mechanism}

\crefname{myAlgorithm}{Transition Mechanism}{Transition Mechanisms}
\Crefname{myAlgorithm}{Transition Mechanism}{Transition Mechanisms}

\numberwithin{equation}{section}
\numberwithin{figure}{section}
\numberwithin{table}{section}

\usepackage{acro}
\DeclareAcronym{ACG}{short=ACG, long=angular central Gaussian}
\DeclareAcronym{BIP}{short=BIP, long=Bayesian inverse problem}
\DeclareAcronym{DFG}{short=DFG, long=Deutsche For\-schungs\-gemein\-schaft}
\DeclareAcronym{BVP}{short=BVP, long=boundary value problem}
\DeclareAcronym{ESS}{short=ESS, long=elliptical slice sampler}
\DeclareAcronym{MCMC}{short=MCMC, long=Markov chain Monte Carlo}
\DeclareAcronym{MH}{short=MH, long=Metropolis--Hastings}
\DeclareAcronym{pCN}{short=pCN, long=preconditioned Crank--Nicolson}
\DeclareAcronym{PDE}{short=PDE, long=partial differential equation}

\newcommand*{\wrt}{with respect to\ }%{w.r.t.\ }

\usepackage{algorithm,algorithmic}

\begin{document}

\title{Dimension-independent\\Markov chain Monte Carlo on the sphere}
\shorttitle{Dimension-independent MCMC on the sphere}

\author{%
	H.\ C.\ Lie\textsuperscript{\affilref{Potsdam}}%
	\and
	D.\ Rudolf\textsuperscript{\affilref{Passau}}%
	\and
	B.\ Sprungk\textsuperscript{\affilref{Freiberg}}%
	\and
	T.\ J.\ Sullivan\textsuperscript{\affilref{Warwick},\affilref{Turing}}%
}
\shortauthor{H.~C.~Lie, D.~Rudolf, B.~Sprungk, and T.~J.~Sullivan}

\newcommand{\change}[1]{\bgroup\color{magenta}#1\egroup} %% Uncomment this line to highlight changes

\date{\today}

\maketitle

\affiliation{1}{Potsdam}{Institut f\"{u}r Mathematik, Universit\"{a}t Potsdam, Karl-Liebknecht-Stra{\ss}e 24--25, 14476 Potsdam OT Golm, \mbox{Germany} (\email{hanlie@uni-potsdam.de})}
\affiliation{2}{Passau}{Universit\"at Passau, Innstra{\ss}e 33, 94032 Passau, Germany (\email{daniel.rudolf@uni-passau.de})}
\affiliation{3}{Freiberg}{Technische Universit{\"a}t Bergakademie Freiberg,
09596 Freiberg, Germany (\email{bjoern.sprungk@math.tu-freiberg.de})}
\affiliation{4}{Warwick}{Mathematics Institute and School of Engineering, University of Warwick, Coventry, CV4 7AL, United Kingdom (\email{t.j.sullivan@warwick.ac.uk})}
\affiliation{5}{Turing}{Alan Turing Institute, 96 Euston Road, London, NW1 2DB, United Kingdom}

\begin{abstract}\small
	\theabstract{
	We consider Bayesian analysis on high-dimensional spheres with angular central Gaussian priors.
	These priors model antipodally symmetric directional data, are easily defined in Hilbert spaces and occur, for instance, in Bayesian binary classification and level set inversion.
	In this paper we derive efficient Markov chain Monte Carlo methods for approximate sampling of posteriors with respect to these priors.
	Our approaches rely on lifting the sampling problem to the ambient Hilbert space and exploit existing dimension-independent samplers in linear spaces.
    By a push-forward Markov kernel construction we then obtain Markov chains on the sphere which inherit reversibility and spectral gap properties from samplers in linear spaces.
    Moreover, our proposed algorithms show dimension-independent efficiency in numerical experiments.
	}

	\thekeywords{Markov chain Monte Carlo%
	\and%
	dimension independence%
	\and%
	directional statistics%
	\and%
	level set inversion%
	\and%
	high-dimensional manifolds}

	\themsc{
	{46T12}%Measure (Gaussian, cylindrical, etc.) and integrals (Feynman, path, Fresnel, etc.) on manifolds
	\and%
	{58C35}%Integration on manifolds; measures on manifolds
	\and%
	{60J22}%Computational methods in Markov chains
	\and%
	{62H11}%Directional data; spatial statistics
	\and%
	{65C40}%Computational Markov chains
	}
\end{abstract}

\section{Introduction}
\label{sec:introduction}
The \ac{MCMC} method is a standard tool for computational probability and recent years have seen increasing interest in \emph{dimension-independent} \ac{MCMC} schemes, i.e.\ those whose statistical efficiency and mixing rates do not degenerate to zero as the dimension of the sample space tends to infinity.
We mention here the \ac{pCN} scheme of \citet{CotterRobertsStuartWhite2013} --- see also \citep[Equation~(15)]{Neal1999} and \citep{Beskos_etal_2008} --- and the \ac{ESS} of \citet{Murray2010}, both of which rely on a Gaussian reference or prior measure.
Recently, the \ac{pCN} scheme has been combined with geometric \ac{MCMC} methods \citep{Beskos_etal_2017, RuSpru2018} and extended to classes of non-Gaussian priors \citep{Chen2018}, and for the \ac{ESS} geometric ergodicity was shown by \citet{NataElAl2021}.

In this work, we study whether these dimension-independent sampling schemes could also be modified for Bayesian analysis on high-dimensional manifolds.
As a starting point we focus on Bayesian inference on high-dimensional spheres \citep{Watson1983}.
We then consider the case of the unit sphere in a general Hilbert space.
We formulate some results in more generality, e.g.\ by replacing the ambient Hilbert space and the unit sphere with a pair of topological spaces that are related by a measurable mapping.
This allows us to extend some of our results to manifolds that are more general than the unit sphere.
Our choice of the sphere is further motivated by particular inverse problems on function spaces such as level set inversion --- more precisely, \emph{binary classification} --- where one is essentially interested only in recovering the pointwise \emph{sign} of a function $u \colon D \to \Reals$ on some domain $D$.
Thus, $u$ and $\alpha u$ for $\alpha > 0$ yield equivalent classifications and, hence, it is natural to consider the inverse problem just on some unit sphere of functions.

Previous works on \ac{MCMC} methods on manifolds --- such as those of \citet{BrubakerEtAl2012}, \citet{ByrneGirolami2013}, \citet{Diaconis_etal_2013}, \citet{MangoubiSmith2018}, and \citet{Zappa2018} --- derive algorithms which are based on the Hausdorff or surface measure as reference measure.
However, despite their use of geometric structure, the performance of such methods typically still degrades as the dimension of the sample space increases to infinity --- one reason being the degeneration of the target density \wrt the Hausdorff measure.

\subsection{Contribution}
\label{ssec:contribution}

In this paper, we aim to construct dimension-independent \ac{MCMC} methods in order to sample efficiently from target measures on high-dimensional spheres.
We identified the \ac{ACG} distribution as a suitable reference measure for this purpose.
The \ac{ACG} models antipodally symmetric directional data and is an alternative to the Bingham distribution \citep{Tyler1987}.
\ac{ACG} distributions and their mixtures have been applied in finite-dimensional directional-statistical problems such as geomagnetism \cite[Section~8]{Tyler1987}, imaging in neuroscience \citep{Tabelow2012}, and materials science \citep[Section~4]{Franke2016}.
\ac{ACG} distributions have been generalised to the projected normal distribution, for which the initial Gaussian distribution may have nonzero mean \citep{WangGelfand2013}.

The \ac{ACG} distribution is defined as the radial projection onto the sphere of a centred Gaussian measure on the ambient Hilbert space and thus yields a well-defined reference measure even in infinite-dimensional Hilbert spaces.
Moreover, the \ac{ACG} distribution can be applied in an acceptance-rejection method for sampling from several families of distributions on spheres and similar manifolds \citep{Kent2018}.
Thus, we anticipate that our proposed methods could also be exploited for dimension-independent \ac{MCMC} for posteriors with other priors, e.g.\ Bingham, Fisher--Bingham, or von Mises--Fisher priors.
However, we leave this question for future research, and focus on posteriors given \wrt the \ac{ACG} prior in this paper.

The particular structure of the \ac{ACG} prior allows us to lift the sampling problem to the ambient Hilbert space.
Thus, we can exploit existing dimension-independent \ac{MCMC} algorithms on linear spaces, e.g.\ the \ac{pCN} algorithm mentioned earlier.
In order to obtain Markov chains on the sphere, we use \emph{push-forward Markov kernels} as introduced by \citet{RuSpru2021}.
This approach then yields specific \ac{MCMC} algorithms that first draw from a suitable distribution a point on the ray defined by the current position on the sphere, and then take a step using a dimension-independent transition kernel.
The resulting state is finally ``reprojected'' to the sphere.

In summary, our contributions are as follows:
\begin{enumerate}[label=(\arabic*)]
	\item
	We propose two easily implementable \ac{MCMC} algorithms that generate reversible Markov chains on high-dimensional spheres, where the chains have as their invariant distribution a given posterior \wrt an ACG prior;

	\item
	We prove uniform ergodicity of the
	suggested Markov chains in finite-dimensional settings;

	\item
	We provide theoretical and numerical evidence for dimension-independent statistical efficiency of the proposed algorithms.
\end{enumerate}
Moreover, our numerical experiments show that some other existing \ac{MCMC} methods for sampling on manifolds --- see \Cref{ssec:related} for an overview --- exhibit decreasing statistical efficiency as the state space dimension increases.
Thus, we provide a first contribution to dimension-independent \ac{MCMC} on manifolds, and thereby demonstrate the feasibility of efficient Bayesian analysis on high-dimensional spheres.

\subsection{Outline}
\label{ssec:outline}

The remainder of this paper is structured as follows.
\Cref{ssec:related} overviews some related work in this area and \Cref{sec:preliminaries} sets out some basic notation.
In \Cref{sec:MCMC_HilbertSpace} we recall two basic \ac{MCMC} algorithms that are valid in infinite-dimensional Hilbert spaces.
Basic definitions and properties related to \ac{MCMC}, in particular the \ac{MH} and slice sampling paradigms, are provided in \Cref{sec:MCMC} for completeness.
In \Cref{sec:MCMC_on_Hilbert_sphere} we make our main theoretical contributions by developing a general framework for obtaining dimension-independent \ac{MCMC} methods on manifolds.
In particular, we derive and analyse two sampling methods on the sphere.
These methods are subjected to numerical tests, in the context of Bayesian binary classification and density estimation, in \Cref{sec:numerical_illustrations}.
Some closing remarks are given in \Cref{sec:closing}.
In the appendix we further recall some key facts about Gaussian and ACG measures (\Cref{sec:Gaussian_ACG}), describe related existing \ac{MCMC} algorithms on the sphere (\Cref{sec:GeodesicZappa}) and provide technical auxiliary results (\Cref{sec:image_of_MC_is_not_MC}).

\subsection{Overview of related work}
\label{ssec:related}

Classical references that treat statistical inference on the sphere include those of \citet{Watson1983}, who focusses exclusively on spheres in finite-dimensional Euclidean spaces, and \citet{MardiaJupp2000}, who focus on circular data but also treat spheres, Stiefel and Grassmann manifolds, and general manifolds.
Special manifolds such as the Stiefel and Grassmann manifolds have also been studied by \citet{Chikuse2003}.
A recent treatment that focuses on modern developments in directional statistics is given by \citet{LeyVerdebout2017}.
\citet{Srivastava2009} consider the infinite-dimensional unit sphere of diffeomorphisms in the context of computer vision, and then apply a Bayesian method for shape identification.
However, none of the cited works treat \ac{MCMC} sampling methods or Bayesian inference on high-dimensional manifolds.

Regarding sampling on embedded manifolds, Hamiltonian Monte Carlo methods are considered by \citet{BrubakerEtAl2012} and \citet{ByrneGirolami2013}, for instance, and \citet{Diaconis_etal_2013} propose a Gibbs sampler.
Moreover, \citet{MangoubiSmith2018} study the so-called ``geodesic walk'' algorithm and establish Wasserstein contraction under the assumption that the manifold has bounded, positive curvature.
The geodesic walk algorithm of \citet{MangoubiSmith2018} chooses a random element uniformly from the unit sphere in the tangent space and moves a fixed time or step size along the corresponding geodesic.
This could be used as a proposal in an \ac{MH} algorithm.
Similarly, \citet{Zappa2018} developed an \ac{MH} algorithm on manifolds where the proposed point is generated by a normally distributed tangential move into the ambient Euclidean space which is then suitably projected back to the manifold.
We will compare our algorithms particularly to that of \citet{Zappa2018} and the geodesic walk algorithm of \citet{MangoubiSmith2018}.
For the specific problem of designing \ac{MCMC} samplers on the sphere, \citet{LanZhouShahbaba2014} considered
Hamiltonian Monte Carlo for distributions that undergo several transformations in order to be defined on the unit sphere.
Their approach has been used by \citet{Holbrook2020} to perform Bayesian nonparametric density estimation based on the Bingham distribution as the prior.

We also mention the work of \citet{Yang2022}, which considers high-dimensional \ac{MCMC} methods for sampling from heavy-tailed distributions.
Their work uses stereographic projection to the sphere to prove desirable mixing properties for the resulting MCMC samplers as the dimension increases.
Two important differences between their work and our work are that they focus on sampling from heavy-tailed distributions on Euclidean spaces, while we consider sampling from the sphere in general Hilbert spaces and focus on the \ac{ACG} prior.

\subsection{Preliminaries and notation}
\label{sec:preliminaries}

Throughout, $(\Omega, \mathcal{A}, \Prob)$ will be a fixed probability space, which we assume to be rich enough to serve as a common domain of definition for all random variables under consideration.

Given a topological space $\XX$, $\probson{\XX}$ denotes the space of probability measures on the Borel $\sigma$-algebra $\Borel{\XX}$ of $\XX$.
Given another topological space $\YY$, $T_{\sharp} \mu \in \probson{\YY}$ denotes the \emph{push-forward} or \emph{image measure} of $\mu \in \probson{\XX}$ under a measurable map $T \colon \XX \to \YY$, i.e.
\begin{equation}
	\label{eq:push-forward}
	(T_{\sharp} \mu) (E) \defeq \mu ( T^{-1} (E) ) \equiv \mu ( \set{ x \in \XX }{ T(x) \in E } ) \quad \text{for each $E \in \Borel{\YY}$.}
\end{equation}
The range of a map $T$ is denoted $\rg(T)$.
Throughout this paper, we use `measurability' to refer to Borel measurability of a mapping between topological spaces or Borel measurability of a subset.

The absolute continuity of one measure $\mu \in \probson{\XX}$ with respect to another measure $\nu$ will be denoted by $\mu \ll \nu$.

We denote the $s$-dimensional Hausdorff measure by $\Hausdorff^{s}$.
If $E$ is an $s$-dimensional measurable set, then $\Hausdorff_{E}$ denotes the restriction of $\Hausdorff^{s}$ to $E$.

We denote the uniform distribution on a bounded subset $G\subset \Reals^d$ by $\Uniform[G]$ and the normal distribution with mean element $m$ and covariance operator $C$ by $\Normal(m,C)$.
For the convenience of the reader we provide a short overview of Gaussian measures on Hilbert spaces in \Cref{ssec:Gaussian}.

\section{MCMC in Hilbert spaces}\label{sec:MCMC_HilbertSpace}

We consider the case in which $\XX$ is a separable Hilbert space $\HH$ and the target or posterior distribution $\nu \in \probson{\HH}$ is determined by a density \wrt a mean-zero Gaussian reference or prior measure $\nu_{0} = \Normal(0,C)$ with covariance operator $C$ via
\begin{equation}
	\label{eq:post0}
	\frac{\rd \nu}{\rd \nu_{0}}(x)
	\propto \exp( - \Phi(x)),
	\quad
	\nu_{0} \text{-a.e.\ }x\in\HH,
\end{equation}
with measurable $\Phi\colon \HH \to \Reals$ satisfying
\[
	\int_{\HH} \exp(-\Phi(x)) \, \nu_{0}(\rd x) < \infty.
\]
In this setting we state two popular approaches for generating $\nu$-reversible Markov chains $(X_k)_{k\in\mathbb N}$.
The first is the \ac{pCN}-\ac{MH} algorithm \citep{Neal1999,CotterRobertsStuartWhite2013}.

\begin{algorithm}
	\caption{\ac{pCN}-\ac{MH} algorithm on $\HH$} \label{alg:pCN_on_H}
	\begin{algorithmic}[1]
		\STATE \textbf{Given:} prior $\nu_0 = \Normal(0,C)$ and target $\nu$ as in \eqref{eq:post0}
		\STATE \textbf{Initial:} step size $s\in(0,1]$ and state $x_0 \in \HH$
		\FOR{$k \in\Naturals_0$}
		\STATE Draw a sample $w_k$ of $\Normal(0, C)$ and set $y_{k+1} \defeq  \sqrt{1-s^2} x_k + s w_k  $
		\STATE Compute $\alpha \defeq  \min\{1, \exp\left(\Phi(x_{k}) - \Phi(y_{k+1})\right)\}$
		\STATE Draw a sample $u$ of $\Uniform[0,1]$
		\IF{$u\leq \alpha$}
		\STATE Set $x_{k+1} = y_{k+1}$
		\ELSE
		\STATE Set $x_{k+1} = x_{k}$
		\ENDIF
		\ENDFOR
	\end{algorithmic}
\end{algorithm}

Here a possible new state $y_{k+1}$ of the Markov chain given the current state $X_k = x_k$ is drawn according to the \ac{pCN}-\ac{MH} proposal kernel $Q(x_k,\cdot)$ where
\[
	\PropKernel(x, \rd y) \defeq \Normal \Bigl( \sqrt{1-s^2} x, s^2 C \Bigr),
\]
with $s\in (0,1]$ denoting a step size parameter.
The state $y_{k+1}$ is accepted as the new state $X_{k+1}$ only with probability $\alpha(x_k,y_{k+1})$, where the acceptance probability function $\alpha$ is given by
\[
	\alpha(x,y) \defeq \min\left\{ 1, \exp(\Phi(x) - \Phi(y))\right\} ;
\]
otherwise, the Markov chain remains at $X_{k+1} \defeq x_k$.
\Cref{alg:pCN_on_H} describes how to realise a Markov chain with \ac{pCN}-\ac{MH} transition kernel.

Next, we consider the \ac{ESS} algorithm suggested by \citet{Murray2010}.
In this reference it is stated in a finite-dimensional setting, but the \ac{ESS} algorithm can be lifted
also to infinite-dimensional settings.
Given $X_k=x$ we first choose a slice $\HH_t \defeq \{x\in\HH\colon \exp(-\Phi(x)) \geq t\}$ at random by drawing $t$ according to $t\sim \Uniform[0,\exp(-\Phi(x_k))]$.
We then sample a new state $X_{k+1} = y$ where $y \in \HH_t$ according to the restriction of $\nu_0=\Normal(0,C)$ to $\HH_t$.
In order to achieve the second step in an approximate way, the \ac{ESS} employs a certain transition mechanism using randomly drawn ellipses in $\HH$ and a shrinkage procedure.
We state this transition in \Cref{alg:ell_shrink}, which we call \emph{shrink-ellipse}$(x,t)$.
Thus, the \ac{ESS} sampler is a hybrid slice sampler.
Its algorithmic realisation is described in \Cref{alg:ell_slice_sampling}.

\begin{algorithm}
\caption{\ac{ESS} algorithm on $\HH$}
\label{alg:ell_slice_sampling}
\begin{algorithmic}[1]
	\STATE \textbf{Given:} prior $\nu_0 = \Normal(0,C)$ and target $\nu$ as in \eqref{eq:post0}
	\STATE \textbf{Initial:} state $x_0 \in \HH$
	\FOR{$k \in\Naturals_0$}
	\STATE Draw sample $t\sim \Uniform[0,\exp(-\Phi(x_k))]$
	\STATE Set $x_{k+1}=\text{shrink-ellipse}(x_k,t)$ \; (see \Cref{alg:ell_shrink})
	\ENDFOR
\end{algorithmic}
\end{algorithm}

\begin{algorithm}
	\caption{Elliptical shrinkage ($\text{shrink-ellipse}(x,t)$)} \label{alg:ell_shrink}
	\begin{algorithmic}[1]
		\STATE \textbf{Input:} state $x\in\HH$ and level $t\in(0,\infty)$
		\STATE \textbf{Output:} state $y$ in level set $\HH_t$
		\STATE Draw a sample $w\sim\Normal(0,C)$
		\STATE Draw a sample $\theta\sim \Uniform[0,2\pi]$
		\STATE Set $\theta_{\min}=\theta-2\pi$ and $\theta_{\max}=\theta$
		\WHILE{$\exp(\Phi(y))<t$}
		\IF{$\theta<0$}
		\STATE Set $\theta_{\min} = \theta$
		\ELSE
		\STATE Set $\theta_{\max} = \theta$
		\ENDIF
		\STATE Draw a sample $\theta\sim \Uniform[\theta_{\min},\theta_{\max}]$
		\STATE Set $y=\cos(\theta) x+\sin(\theta) w$
		\ENDWHILE
	\end{algorithmic}
\end{algorithm}

It can be shown that the transition kernel of the \ac{ESS} sampler has $\nu$ as its invariant distribution; see \citep{Murray2010} and \citep[Section~3]{NatPhD21} for further details.
For a more comprehensive introduction to \ac{MCMC} and the \ac{MH} and slice sampling approaches, see \Cref{sec:MCMC}.

\begin{remark}
	As noted by \citet{Murray2010}, both the \ac{ESS} algorithm and the \ac{pCN} algorithm draw proposal states from ellipses that are accepted or rejected.
	In the \ac{pCN} algorithm, the random proposal $X'$ satisfies $X'=\sqrt{1-s^2}x+ s Z$, where $Z\sim\Normal(0,C)$.
	For a fixed realisation $z$ of $Z$ and for varying $s\in (0,1)$, the set $\set{ \sqrt{1-s^2}x+sz }{ s\in(0,1) }$ is half of the ellipse passing through $x$ and $z$ centred at the origin, since $\sqrt{1-s^2}x+sz=\cos(\theta)x+\sin(\theta)z$ for $\theta=\arcsin(s)$.
	In the elliptical slice sampling algorithm, a full ellipse instead of a half ellipse is used, thus providing a larger set of potential proposal states.
	Moreover, one never remains at the current state.
	Intuitively, using a larger set of potential proposal states might lead to faster convergence, as measured by the number of Markov chain steps.
\end{remark}

\section{Markov chain Monte Carlo on the sphere}
\label{sec:MCMC_on_Hilbert_sphere}

In this section, we construct and analyse \ac{MCMC} algorithms for approximate sampling from a probability distribution $\mu$ on a high-dimensional unit sphere $\Sphere^{d-1} \subset \Reals^{d}$ where $\mu$ admits a density with respect to an \ac{ACG} reference or prior measure $\mu_0$.
The \ac{ACG} measure is given as follows.
Consider the unit sphere $\Sphere \defeq \set{ x \in \HH}{ \norm{ x } = 1}$ of a separable and possibly infinite-dimensional Hilbert space $\HH$ as well as a centred Gaussian measure $\Normal(0, C)$ on $\HH$.
Furthermore, let $\RadProjToSphere\colon \HH \to \Sphere$ denote the radial projection to the sphere
\begin{equation}
	\label{eq:radial_projection_map_T}
	\RadProjToSphere\colon \HH \to\Sphere,\quad \RadProjToSphere(x) \defeq \begin{cases}
		\frac{x}{\norm{ x }}, & \text{if $x \neq 0$,}
		\\
		\bar{z}, & \text{if $x = 0$,}
	\end{cases}
\end{equation}
with a fixed but arbitrary $\bar{z}\in\Sphere$.
Then we call the probability measure
\[
	\mu_0 \defeq \RadProjToSphere_{\sharp} \Normal (0, C)
\]
the \emph{angular central Gaussian measure} with parameter $C$ and denote it by $\mu_0 = \ACG (C)$.
In the case where $\HH = \Reals^{d}$ with the usual Euclidean norm and $C\in\Reals^{d \times d}$ being symmetric and positive definite, one can show that the density $\rho \colon \Sphere^{d - 1} \to [0, \infty)$ of $\mu_0=\ACG (C)$ \wrt the $(d - 1)$-dimensional Hausdorff measure on the sphere is
\begin{align*}
	\rho(\xx)
	& = \frac{ \Gamma(d / 2) }{ 2 \pi^{d / 2} \sqrt{ \det C } } \ \norm{ \xx }_{C}^{- d} ;
\end{align*}
see \Cref{ssec:ACG} for details, including the definition \eqref{eq:precision_norm} of $\norm{ \xx }_{C}$.
We shall write bars over symbols to distinguish elements of $\Sphere$ from elements of $\HH$.
Thus, $x \in \HH$, while $\xx \in \Sphere$.

Consider a given target or posterior measure $\mu \in \probson{\Sphere}$ which is  absolutely continuous \wrt an \ac{ACG} reference or prior measure $\mu_{0} \defeq \ACG (C)$, i.e.,
\begin{equation}
	\label{eq:push_post}
	\frac{\rd \mu}{\rd \mu_{0}}(\xx)
	\propto \exp( - \PPhi(\xx)),
	\quad
	\xx \in \Sphere
\end{equation}
where $\PPhi\colon \Sphere \to \Reals$ denotes a measurable function that satisfies
\[
	\int_\Sphere \exp(-\PPhi(\xx)) \, \mu_{0}(\rd \xx) < \infty.
\]
The \ac{ACG} prior allows us to define an equivalent sampling problem in the ambient Hilbert space.

\paragraph{Lifting to ambient Hilbert space.}
Define the measurable function $\Phi \colon \HH \to \Reals$ by
\begin{equation}
\label{eq:potential_lifted_to_ambient_Hilbert_space}
	\Phi(x)
	\defeq
	\PPhi(\RadProjToSphere(x)), \qquad x \in \HH,
\end{equation}
where $\RadProjToSphere\colon \HH \to \Sphere$ is the radial projection to the sphere from \eqref{eq:radial_projection_map_T}, and define a target measure $\nu \in \probson{\HH}$ via
\begin{equation}
	\label{eq:post}
	\frac{\rd \nu}{\rd \nu_{0}}(x)
	\propto \exp( - \Phi(x)),
	\quad
	\nu_{0}\text{-a.e.\ }x\in\HH,
\end{equation}
where $\nu_{0} = \Normal(0,C)$.
Using $\mu_{0} = \ACG (C) = \RadProjToSphere_{\sharp} \nu_{0}$ and using the construction of $\Phi$, we obtain $\mu = \RadProjToSphere_{\sharp}\nu$.
We show this result in a slightly more general form, i.e.\ for an arbitrary measurable map $T \colon \XX\to\YY$ between two arbitrary topological spaces $\XX$ and $\YY$.
In particular, one can apply \Cref{propo:proj_meas} to more general manifolds in Hilbert spaces, provided that these manifolds can be expressed as the images of a measurable mapping.

\begin{proposition}
	\label{propo:proj_meas}
	Let $\nu_{0} \in \probson{\XX}$, let $T\colon \XX \to \YY$ be measurable, and let $\PPhi \colon \YY \to \Reals$ be such that $Z = \int_{\YY} \exp(- \PPhi(T(x))) \, \nu_{0}(\rd x)<\infty$.
	Define $\nu$ by
	\[
		\frac{\rd \nu}{\rd \nu_{0}}(x)
		=
		\frac{1}{Z} \exp(- \PPhi(T(x))),
		\qquad
		\nu_{0} \text{-a.e.\ }x\in\XX.
	\]
	Then
	\[
		\frac{\rd T_{\sharp}\nu}{\rd T_{\sharp} \nu_{0}}(\xx)
		=
		\frac{1}{Z} \exp(- \PPhi(\xx)),
		\qquad
		T_{\sharp} \nu_{0} \text{-a.e.\ }\xx \in \YY.
	\]
\end{proposition}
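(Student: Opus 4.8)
The plan is to prove the density formula by evaluating the push-forward measure $T_{\sharp}\nu$ on an arbitrary Borel set and reducing everything to the abstract change-of-variables formula for image measures. First I would fix $E \in \Borel{\YY}$ and use the definition \eqref{eq:push-forward} of the push-forward together with the prescribed density of $\nu$ to write
\[
	(T_{\sharp}\nu)(E) = \nu\bigl(T^{-1}(E)\bigr) = \int_{\XX} \one_{T^{-1}(E)}(x)\,\frac{1}{Z}\exp\bigl(-\PPhi(T(x))\bigr)\,\nu_{0}(\rd x),
\]
where the integrand is a nonnegative (Borel-)measurable function of $x$ because $\PPhi$ is measurable, as in the surrounding text.

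Next I would invoke the standard change-of-variables identity for push-forward measures: for every nonnegative measurable $g\colon\YY\to[0,\infty]$ one has $\int_{\XX}(g\circ T)\,\rd\nu_{0} = \int_{\YY} g\,\rd(T_{\sharp}\nu_{0})$. This is proved in the usual three steps --- it is exactly the definition of $T_{\sharp}\nu_{0}$ when $g=\one_{F}$ for $F\in\Borel{\YY}$, it extends to nonnegative simple functions by linearity, and then to general $g$ by monotone convergence. Applying this with $g \defeq \frac{1}{Z}\exp(-\PPhi)\,\one_{E}$, and noting that $g\circ T = \frac{1}{Z}\exp(-\PPhi\circ T)\,\one_{T^{-1}(E)}$, the displayed expression becomes $\int_{E}\frac{1}{Z}\exp(-\PPhi(\xx))\,(T_{\sharp}\nu_{0})(\rd\xx)$.

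Putting the two steps together gives $(T_{\sharp}\nu)(E) = \int_{E}\frac{1}{Z}\exp(-\PPhi(\xx))\,(T_{\sharp}\nu_{0})(\rd\xx)$ for every $E\in\Borel{\YY}$. Since this holds for all Borel sets, it simultaneously shows that $T_{\sharp}\nu\ll T_{\sharp}\nu_{0}$ (the right-hand side vanishes whenever $(T_{\sharp}\nu_{0})(E)=0$) and that the nonnegative measurable function $\xx\mapsto\frac{1}{Z}\exp(-\PPhi(\xx))$ is a density of $T_{\sharp}\nu$ with respect to $T_{\sharp}\nu_{0}$; by essential uniqueness of the Radon--Nikodym derivative, $\rdfrac{T_{\sharp}\nu}{T_{\sharp}\nu_{0}}(\xx)=\frac{1}{Z}\exp(-\PPhi(\xx))$ for $T_{\sharp}\nu_{0}$-a.e.\ $\xx\in\YY$. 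I do not expect a genuine obstacle here: the only points needing care are the measurability of $\PPhi$ (so that $g$ is an admissible test function in the change-of-variables formula, which is implicit in the paper's conventions and in the hypothesis that $\exp(-\PPhi\circ T)$ is $\nu_{0}$-integrable) and the fact that one must run the set identity for \emph{all} Borel $E$, rather than computing a single integral, so that absolute continuity and the explicit density are obtained at the same time.
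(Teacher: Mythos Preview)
Your proof is correct and follows essentially the same approach as the paper: both fix an arbitrary Borel set $E\in\Borel{\YY}$ and verify the identity $(T_{\sharp}\nu)(E)=\int_{E}\frac{1}{Z}\exp(-\PPhi(\xx))\,(T_{\sharp}\nu_{0})(\rd\xx)$ via change of variables. The only cosmetic difference is that the paper routes the change of variables through the underlying probability space $(\Omega,\mathcal{A},\Prob)$ using random variables $X\sim\nu_{0}$ and $\widebar{X}=T(X)$, whereas you invoke the abstract push-forward integration formula $\int_{\XX}(g\circ T)\,\rd\nu_{0}=\int_{\YY}g\,\rd(T_{\sharp}\nu_{0})$ directly.
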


\begin{proof}
	Let $A \in \Borel{\YY}$.
	We shall show that
	\begin{equation}
		\label{eq:goal_of_propo_proj_meas}
		(T_{\sharp}\nu)(A)
		= \frac{1}{Z} \int_{A} \exp(-\PPhi(\xx)) \, T_{\sharp} \nu_{0}(\rd \xx).
	\end{equation}
	To this end, let $X\sim \nu_{0}$ and $\widebar{X}\defeq T(X)$, i.e.\ $\widebar{X} \sim T_{\sharp} \nu_{0}$, be random variables on the underlying probability space $(\Omega, \mathcal{A}, \Prob)$ that we fixed in \Cref{sec:preliminaries}.
	Then
	\begin{align*}
		(T_{\sharp}\nu) (A)
		& = \frac{1}{Z} \int_{T^{-1}(A)} \exp(-\PPhi(T(x))) \, \nu_{0}(\rd x) \\
		& = \frac{1}{Z} \int_{X^{-1}(T^{-1}(A))} \exp(-\PPhi(T(X(\omega)))) \, \Prob(\rd \omega) && \text{since $\nu_{0}=\Prob\circ X^{-1}$}\\
		& = \frac{1}{Z} \int_{\widebar{X}^{-1}(A)} \exp(-\PPhi(\widebar{X}(\omega))) \, \Prob(\rd \omega) && \text{since $\widebar{X} \defeq T(X)$}\\
		& = \frac{1}{Z} \int_{A} \exp(-\PPhi(\xx)) \, T_{\sharp} \nu_{0}(\rd \xx) && \text{since $\widebar{X}\sim T_{\sharp} \nu_{0}$,}
	\end{align*}
	which establishes \eqref{eq:goal_of_propo_proj_meas} and completes the proof.
\end{proof}

The idea of sampling the push-forward $\mu = T_{\sharp} \nu$ of a measure $\nu$ defined on the ambient Hilbert space $\HH$ is crucial for the construction of the following algorithms.
In particular, we shall exploit suitable transition kernels for sampling from $\nu \in \probson{\HH}$ in order to construct Markov chains on $\Sphere$ with invariant distribution $T_{\sharp}\nu$, where $T = \RadProjToSphere$.
To this end, we use the framework of push-forward transition kernels, which we describe in \Cref{sec:reprojection_method}.

\subsection{Related approaches and their shortcomings}
\label{ssec:alternative_projection_methods}

Given a Markov chain $(X_n)_{n\in\Naturals}$ with $\nu$-reversible transition kernel $K$, one can also consider $(T(X_n))_{n\in\Naturals}$ as a sequence of random variables on $\YY$.
In our prototypical setting where $\YY=\Sphere$, the stochastic process $(T(X_n))_{n\in\Naturals}$ is simply the projection of the Markov chain $(X_n)_{n\in\Naturals}$ onto $\Sphere$ via $T = \RadProjToSphere$.
Hence, one can think of this as a simple projection approach.
If the law $\Prob \circ (X_n)^{-1}$ of $X_n$ converges to $\nu$ in the total variation norm as $n \to \infty$, then the law $\Prob \circ (T(X_n))^{-1}$ of $T(X_n)$ will also converge to $T_{\sharp}\nu$ in the total variation norm, since
\[
	\norm{ \Prob \circ (T(X_n))^{-1} - T_{\sharp}\nu}_{\TV}
	\leq
	\norm{ \Prob \circ (X_n)^{-1} - \nu }_{\TV}
\]
due to
\[
	\norm{ T_{\sharp}\rho - T_{\sharp}\nu}_{\TV}
	=
	\sup_{A \in \Borel{\Sphere}} \absval{ \rho(T^{-1}(A)) - \nu(T^{-1}(A)) }
	=
	\sup_{A \in \sigma(T)} \absval{ \rho(A) - \nu(A) }
	\leq
	\norm{ \rho - \nu }_{\TV}
\]
with $\Prob \circ (T(X_n))^{-1} = T_{\sharp}\rho$ and $\Prob \circ (X_n)^{-1} = \rho$, where $\sigma(T)$ denotes the Borel $\sigma$-algebra generated by $T$.
However, the sequence $(T(X_n))_{n\in\Naturals}$ fails, in general, to be a Markov chain \citep[e.g.][]{GloverMitro1990}.
In particular, we provide an explicit counterexample in case of $T = \RadProjToSphere$ in \Cref{sec:image_of_MC_is_not_MC}.
More generally, \citet[Theorem~3]{Rosenblatt1966} considers general Markov processes $(X_i)_{i\in I}$, which may be discrete or continuous in time or space, and gives sufficient and necessary conditions on a measurable mapping $T$ such that $(T(X_i))_{i\in I}$ is again a Markov process.

Another related approach can be constructed as follows.
One could simply define the transition kernel
\begin{equation}
	\label{eq:naive}
	\bar{K} (\xx, A) \defeq K (\bar{x}, (\RadProjToSphere)^{-1}(A))
	\qquad
	\forall \xx \in \YY, A \in \Borel{\YY}.
\end{equation}
We shall refer to the transition kernel $\bar{K}$ in \eqref{eq:naive} as the `na\"{i}ve reprojection kernel'.
We call $\bar{K}$ `na\"{i}ve' because it does not perform averaging with respect to the regular conditional distribution $\nu_{\mid T}(\xx,\quark)$ of $X \sim \nu$ given $T(X)=\xx$.
In \eqref{eq:TM_gen} below, we describe a kernel --- the so-called `push-forward transition kernel' --- that does perform this averaging.
In the setting where the topological spaces $\XX$ and $\YY$ satisfy $\YY \subset \XX$, one realises $\yy$ \wrt $\bar{K} (\xx, \quark)$ by first choosing $y$ according to $K (\xx, \quark)$ and then setting $\yy \defeq T(y)$, as illustrated in \Cref{fig:naive-reproj}.
That is, one first transitions from $\xx \in \YY$ to a state $y$ in the ambient space $\XX$, and then ``reprojects'' this state $y$ into $\yy \in \YY$ using the mapping $T$.

\begin{figure}[t]
	\centering
	\includegraphics{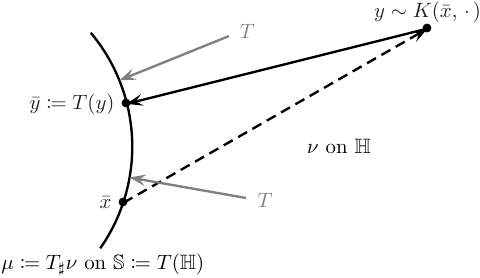}
	\caption{Illustration of the steps for drawing states using the na{\"\i}ve reprojection kernel $\bar{K}$ in \eqref{eq:naive} for $\XX=\HH$, $\YY=\Sphere$, and $T=\RadProjToSphere$ in \eqref{eq:radial_projection_map_T}.
	Starting from $\xx$, an intermediate state $y\in \HH$ is drawn from the $\nu$-reversible transition kernel $K(\xx,\quark)$ on $\HH$.
	The next state drawn from the na\"{i}ve reprojection kernel $\bar{K}(\xx,\quark)$ is then $\yy \defeq T(y)=\RadProjToSphere(y)$.
	Solid arrows indicate deterministic maps, whereas dashed arrows indicate randomised maps, i.e.\ draws from transition kernels.}
	\label{fig:naive-reproj}
\end{figure}
Unlike the projection approach described earlier, this method yields a Markov chain.
However, as numerical experiments show, the na\"{i}ve reprojection kernel $\bar{K}$ does not have $\mu$ as its stationary distribution, even if $K$ is $\nu$-invariant or $\nu$-reversible.
To see why, recall that $\mu=T_\sharp\nu$.
Hence, $\bar{K}$ is $\mu$-invariant if and only if
\begin{align*}
	\int_{\YY} \bar{K} (\xx, A) \, \mu(\rd \xx)
	& =
	\int_{\YY} K (\bar{x}, T^{-1}(A)) \, \mu(\rd \xx)
	=
	\int_{\XX} K (T(x), T^{-1}(A)) \, \nu(\rd x)
	=
	\nu(T^{-1}(A))
\end{align*}
for all $A\in\Borel{\YY}$.
If $K$ is $\nu$-invariant, then by definition $\nu(T^{-1}(A)) = \int_{\XX} K (x, T^{-1}(A)) \, \nu(\rd x)$.
This yields the following necessary and sufficient condition for the $\mu$-invariance of $\bar{K}$:
\begin{equation*}
	\int_{\XX} K (x, T^{-1}(A)) \, \nu(\rd x)
	=
	\int_{\XX} K (T(x), T^{-1}(A)) \, \nu(\rd x),
	\qquad
	\forall A\in\Borel{\YY}.
\end{equation*}
Based upon numerical experiments, we argue that this condition is not necessarily satisfied in the setting where $\XX=\HH$, $\YY=\Sphere$, and $T=\RadProjToSphere$.
Let $\HH = \Reals^3$, $\nu = \Normal(0,C)$ with covariance matrix $C\in\Reals^{3\times 3}$, and consider the $\nu$-reversible \ac{pCN} proposal kernel
\[
	K(x) = \Normal \Bigl( \sqrt{1-s^2}x, s^2 C \Bigr), \quad \text{ with } \quad s=0.7 \text{ and } \
	C = \begin{pmatrix}
	1.25 & 0.33 & -1.62\\
	0.33 & 0.42 & -0.09\\
      -1.62 & -0.09 & 2.85
	\end{pmatrix}.
\]
We now estimate and compare the probability density function of the marginals of $\mu = \RadProjToSphere_\sharp\nu$ and $\mu\bar{K}$ by
kernel density estimation based on $10^6$ independent samples of $\mu$ and $\mu\bar{K}$, respectively.\footnote{The samples were generated as follows: 1) Draw a sample $x$ from $\nu$ and set $\xx \defeq \RadProjToSphere(x)$, so that $\xx$ is a sample draw from $\mu$; 2) draw another sample $w$ from $\nu$ and set $y \defeq \sqrt{1-s^2} \xx + s w$, so that $\yy = \RadProjToSphere(y)$ is a sample draw from $\mu\bar{K}$.}
The results are displayed in \Cref{fig:CounterExam}.
The important observation is that the marginals of $\mu$ (dashed yellow line) and $\mu\bar{K}$ (dotted blue line) differ.
Hence, $\bar{K}$ is not $\mu$-invariant in this case.
Note that the marginals of $\mu$ (dashed yellow line) coincide with the marginals of $\mu(\RadProjToSphere_{\sharp} K)$ (solid red line), where $\RadProjToSphere_{\sharp} K$ is the $\mu$-reversible transition kernel of the reprojected Markov chain using the push-forward transition kernel $T_{\sharp}K$ in \eqref{eq:TM_gen_0}.
\begin{figure}[t]
	\includegraphics[width=0.32\textwidth]{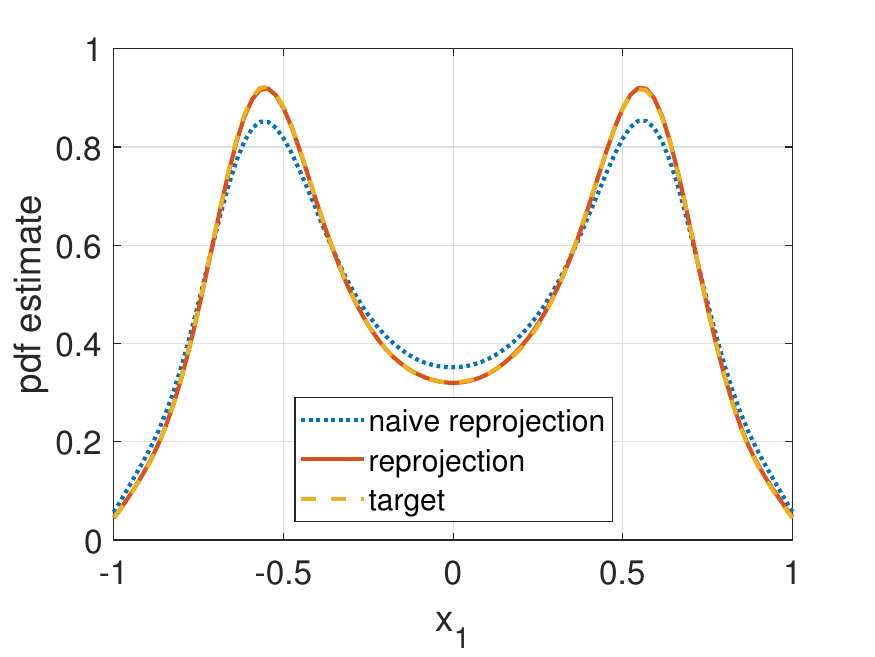}
	\includegraphics[width=0.32\textwidth]{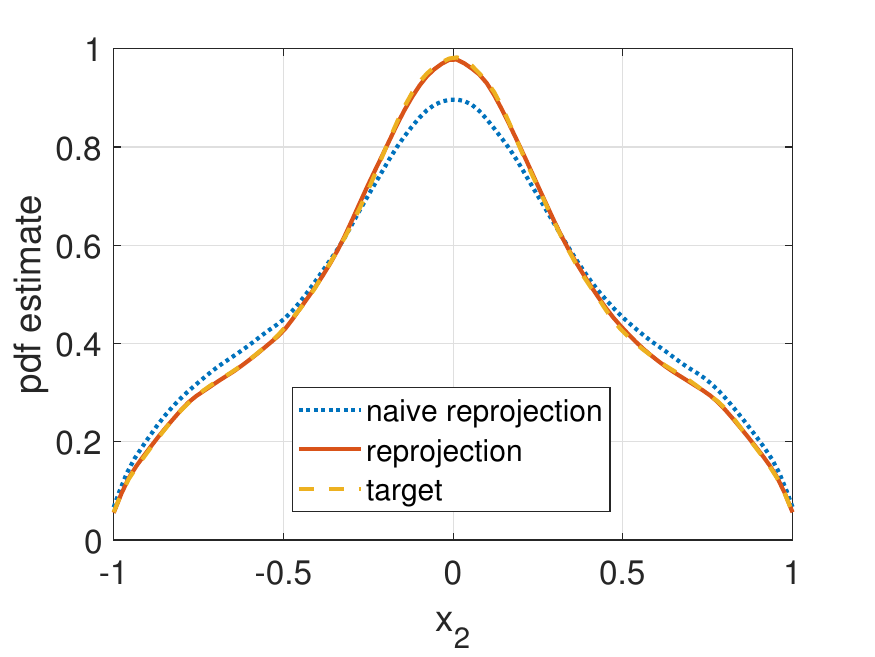}
	\includegraphics[width=0.32\textwidth]{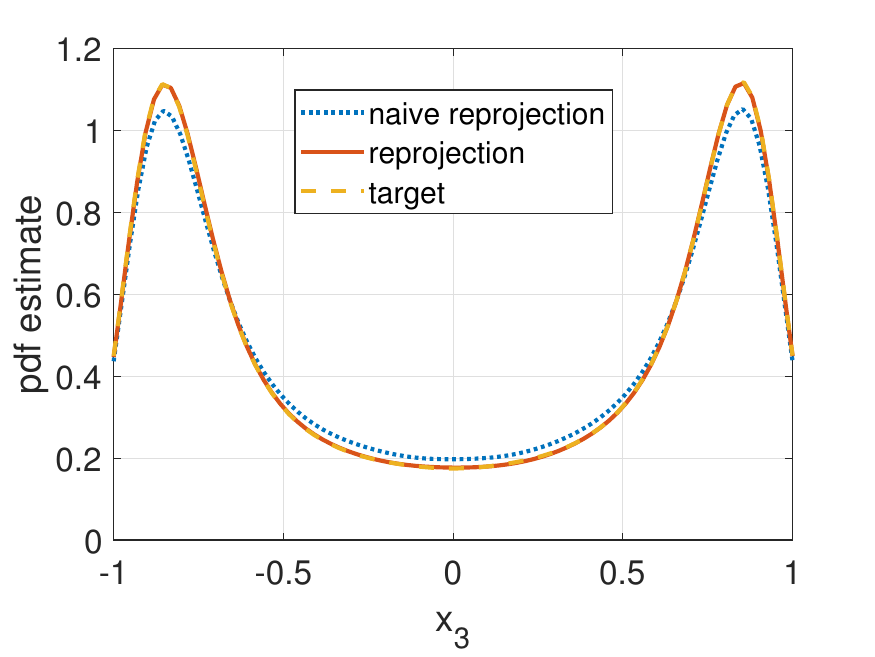}
	\caption{Comparison of marginals of $\mu = \RadProjToSphere_\sharp\nu$ (target), $\mu\bar{K}$ (na\"{i}ve reprojection) with $\bar{K}$ as in \eqref{eq:naive}, and $\mu(\RadProjToSphere_{\sharp} K)$ (reprojection) with $\RadProjToSphere_{\sharp} K$ as in \eqref{eq:TM_gen} with \ac{pCN}-kernel $K$ and $T=\RadProjToSphere$.}
	\label{fig:CounterExam}
\end{figure}

\subsection{The reprojection method}
\label{sec:reprojection_method}
We present now a simple method for defining a $\mu$-reversible Markov chain $(\widebar{X}_n)_{n\in\Naturals}$ on $\Sphere$, by using a $\nu$-reversible transition kernel $K$ on $\HH$.
The method employs the concept of \emph{push-forward transition kernels} which we explain for the general setting of Polish spaces $\XX,\YY$ connected via a measurable mapping $T\colon \XX\to\YY$.
For more details of this approach we refer to \citep{RuSpru2021}.
For the particular algorithms that we consider later, we focus on the specific case of $\YY = \Sphere$, $\XX = \HH$, and $T$ the radial projection map $T=\RadProjToSphere$ given in \eqref{eq:radial_projection_map_T}.

Given a $\nu$-invariant transition kernel $K$ on $\XX$ and a measurable map $T\colon \XX \to \YY$, we define the \emph{push-forward transition kernel} $T_{\sharp}K$ on $\YY$ as follows:
\begin{equation}
\label{eq:TM_gen_0}
	T_{\sharp}K(\xx, A)
	\defeq
	\ev{K(X, T^{-1}(A))\mid T(X) = \xx},
	\qquad
	X\sim\nu,
\end{equation}
where $\xx \in \YY$ and $A \in\Borel{ \YY}$.
If $T$ is bijective, then
\[
	T_{\sharp}K(\xx, A)
	=
	K(T^{-1}(\xx), T^{-1}(A)).
\]
In the following, we also use the shorter notation $\KK = T_{\sharp}K$.
Below, we summarise some important properties of push-forward transition kernels that are inherited from the original transition kernel.

\begin{lemma}[{\citealp{RuSpru2021}}]
	\label{lem:RuSpru}
	Let $\XX$ and $\YY$ be Polish spaces, $T \colon \XX \to \YY$ be a measurable mapping, $K$ be a $\nu$-invariant transition kernel on $\XX$, and $\mu \defeq  T_{\sharp}\nu$.
	\begin{enumerate}[label=(\alph*)]
		\item
		\label{item:RuSpru_!}
		If $K$ is reversible \wrt $\nu$, then $T_{\sharp} K$ is reversible \wrt $\mu$.

		\item
		\label{item:RuSpru_2}
		If $K$ has an $L^2_\nu$-spectral gap, then $T_{\sharp} K$ has an $L^2_\mu$-spectral gap and $$\gap_\mu(T_{\sharp} K) \geq \gap_\nu(K).$$

		\item
		\label{item:RuSpru_3}
		If $K$ is an \ac{MH} kernel with proposal kernel $Q\colon \XX \times \Borel{\XX}\to[0,1]$ and acceptance probability $\alpha \colon \XX \times \XX \to[0,1]$ such that
		\[
			\alpha( x, y ) = \widebar\alpha(T(x),T(y)) \qquad \forall x,y\in\XX
		\]
		for a measurable $\widebar{\alpha} \colon \YY \times \YY \to [0,1]$, then $T_{\sharp} K$ is an \ac{MH} kernel with acceptance probability $\widebar{\alpha}$ and proposal kernel $\widebar Q$ given by
		\[
			\widebar Q(\xx, A)
			\defeq
			\int_{\XX} Q(x , T^{-1}(A)) \, \nu_{|T}(\xx, \rd x),\qquad \forall \xx \in \YY,\ A\in\Borel{\YY},
		\]
		where $\nu_{|T}(\xx,\quark)$ denotes the regular conditional distribution of $X \sim \nu$ given $T(X)=\xx$.
	\end{enumerate}
\end{lemma}

See \Cref{sec:MCMC_general} for the definition of the spectral gap of a transition kernel $K$.

The last item in the above lemma also shows that one can simulate push-forward transition kernels by exploiting the regular conditional distribution $\nu_{\mid T} \colon \YY \times \Borel{\XX} \to [0,1]$ of $X \sim \nu$ given $T(X)=\xx$.
We recall that $\nu_{\mid T}$ possesses the properties of a transition kernel and satisfies
\begin{equation}
	\label{eq:regular_conditional_distribution_nu_given_T}
	\nu_{|T}(T(X), A) = \Prob\left(X \in A\ \middle|\ T(X) \right)
	\qquad
	\Prob\text{-almost surely},
\end{equation}
for any $A \in \Borel{\XX}$.
Given this regular conditional distribution, the disintegration theorem yields the representation
\begin{equation}
\label{eq:TM_gen}
	T_{\sharp}K(\xx, A)
	=
	\int_{\XX}
	K(x , T^{-1}(A)) \, \nu_{|T}(\xx, \rd x),
\end{equation}
for general transition kernels $K$.
Thus, the push-forward transition kernel $T_{\sharp}K$ can be realised by the following mechanism.
\begin{myAlgorithm}
	Given the current state $\xx \in \YY$ one obtains the next state $\yy\in\YY$ as follows:
	\begin{enumerate}[label=(\arabic*)]
		\item Draw $X\sim \nu_{|T}(\xx, \quark)$ and call the realisation $x\in \XX$;
		\item Draw $Y\sim K(x, \quark)$, call the realisation $y \in \XX$ and return $\yy \defeq T(y)\in \YY$.
	\end{enumerate}
\end{myAlgorithm}

We now consider the specific case of $\XX = \HH$ being a Hilbert space, $\YY = \Sphere$ its unit sphere and $T = \RadProjToSphere$ being the radial projection defined in \eqref{eq:radial_projection_map_T}.
In order to obtain a $\mu$-reversible Markov chain on $\Sphere$, we can consider the push-forward transition kernels $\widebar{K} = T_{\sharp}K$ of $\nu$-reversible transition kernels $K$ on the ambient Hilbert space $\HH $ --- such as the \ac{pCN}-\ac{MH} kernel or the \ac{ESS} kernel --- provided that we can also simulate the regular conditional distribution $\nu_{\mid T}$ for the lifted target $\nu$ in \eqref{eq:post}.
The resulting algorithm is illustrated in \Cref{fig:reproj}.
In particular, by going randomly from $\xx$ to $x$ in the ambient space, performing a transition from $x$ to $y$ by using $K$, and then by ``reprojecting'' deterministically from $y$ to $\yy = \RadProjToSphere(y)$, we end up on the sphere $\Sphere$.
Since this is performed at each iteration of the Markov chain, we name this the \emph{reprojection method}.

\begin{figure}
	\centering
	\includegraphics{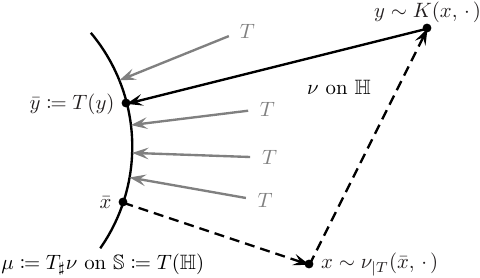}
	\caption{Illustration of the steps in the reprojection method from \Cref{sec:reprojection_method}.
	The reprojection method defines a $\mu$-reversible transition kernel on $\Sphere$ in terms of a $\nu$-reversible transition kernel $K$ on the ambient space $\HH$, where $\Sphere \defeq T(\HH)$, $\mu \defeq T_{\sharp} \nu$, $\nu_{|T} (\xx, \quark)$ is the regular conditional distribution of $X \sim \nu$ given $T(X) = \xx$, and $T=\RadProjToSphere$.
	Solid arrows indicate deterministic maps, whereas dashed arrows indicate randomised maps, i.e.\ draws from transition kernels.}
	\label{fig:reproj}
\end{figure}

Next, we derive the regular conditional distribution $\nu_{|\RadProjToSphere}$ for Gaussian measures $\nu = \Normal(0,C)$ on $\HH = \Reals^d$, and state the resulting reprojected \ac{pCN}-\ac{MH} algorithm as well as the reprojected \ac{ESS} algorithm.

\paragraph{Simulating the conditional distribution $\nu_{|\RadProjToSphere}$.}
We first prove a proposition about the regular conditional distributions $\nu_{|T}$ in the more general setting of Polish spaces $\XX,\YY$.
We then apply this proposition to derive an explicit description for $\nu_{|\RadProjToSphere}$.
One can modify this procedure for manifolds in Hilbert spaces that are more general than the unit sphere $\Sphere$, e.g.\ manifolds that can be described by a measurable mapping $T \colon \HH \to \HH$, by replacing $\RadProjToSphere$ with $T$.

\begin{proposition}
	\label{propo:equality_of_RCDs}
	Let $\XX$ and $\YY$ be Polish spaces equipped with a measurable mapping $T \colon \XX \to \YY$ and $\nu_{0}\in\probson{\XX}$.
	Let $\PPhi \colon \YY \to \Reals$ be measurable with $Z \defeq \int_{\YY} \exp(- \PPhi(T(x))) \, \nu_{0}(\rd x)<\infty$, define $\Phi \colon \XX \to \Reals$ by $\Phi(x) \defeq \PPhi(T(x))$ for $ x \in \XX$, and let $\nu\in\probson{\XX}$ be given by
	\begin{equation*}
	 \frac{\rd \nu}{\rd \nu_{0}}(x)
	 = \frac{1}{Z} \exp( - \Phi(x)),
	\quad
	\nu_{0}\text{-a.e.\ }x\in\XX.
	\end{equation*}
    Furthermore, let $\nu_{0|T}$ be the regular conditional distribution of $X_{0}\sim \nu_{0}$ given $T(X_{0})$, and let $\nu_{|T}$ be the regular conditional distribution of $X\sim \nu$ given $T(X)$.
	Then
	\[
		\nu_{|T}(T(x),\quark) = \nu_{0|T}(T(x),\quark)
	\]
	for $\nu_{0}$-a.e.\ $x\in\XX$.
\end{proposition}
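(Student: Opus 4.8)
The plan is to use the characterising (defining) property of regular conditional distributions together with the fact that the Radon--Nikodym derivative $\frac{\rd\nu}{\rd\nu_{0}}$ factors through $T$. Recall that $\nu_{0|T}$ is, up to a $T_{\sharp}\nu_{0}$-null set, the unique transition kernel $\YY\times\Borel{\XX}\to[0,1]$ with
\[
	\nu_{0}\bigl(A\cap T^{-1}(B)\bigr)=\int_{B}\nu_{0|T}(\yy,A)\,(T_{\sharp}\nu_{0})(\rd\yy)\qquad\forall A\in\Borel{\XX},\ B\in\Borel{\YY},
\]
and likewise $\nu_{|T}$ is characterised by the same identity with $\nu$ and $T_{\sharp}\nu$ in place of $\nu_{0}$ and $T_{\sharp}\nu_{0}$. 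It therefore suffices to show that $\nu_{0|T}$ itself satisfies the characterising identity for $\nu$; essential uniqueness then forces $\nu_{|T}(\quark,\cdot)=\nu_{0|T}(\quark,\cdot)$ off a $T_{\sharp}\nu$-null set.

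First I would invoke \Cref{propo:proj_meas}, applied to the present $T$, $\nu_{0}$ and $\PPhi$, to record that $\frac{\rd T_{\sharp}\nu}{\rd T_{\sharp}\nu_{0}}(\yy)=\frac{1}{Z}\exp(-\PPhi(\yy))$ for $T_{\sharp}\nu_{0}$-a.e.\ $\yy$. Then, for $A\in\Borel{\XX}$ and $B\in\Borel{\YY}$, I would compute
\[
	\nu\bigl(A\cap T^{-1}(B)\bigr)=\frac{1}{Z}\int_{\XX}\one_{A}(x)\,\one_{B}(T(x))\exp(-\PPhi(T(x)))\,\nu_{0}(\rd x),
\]
and observe that $x\mapsto\one_{B}(T(x))\exp(-\PPhi(T(x)))$ is again a function of $T(x)$, namely $g\circ T$ with $g\defeq\one_{B}\exp(-\PPhi)$. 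This is the crucial point: the ``nuisance'' density is itself measurable with respect to $T$, so it can be absorbed into the $\YY$-side of the defining identity without ever manipulating $\nu_{0|T}(\yy,\cdot)$ on the fibres of $T$. Applying the defining property of $\nu_{0|T}$ with test pair $(\one_{A},g)$ — which is legitimate since $\int g(T(x))\,\nu_{0}(\rd x)\le Z<\infty$, after the standard extension of that property from bounded to $\nu_{0}$-integrable test functions by monotone convergence — and then substituting $\frac{1}{Z}\exp(-\PPhi(\yy))\,(T_{\sharp}\nu_{0})(\rd\yy)=(T_{\sharp}\nu)(\rd\yy)$, I obtain
\[
	\nu\bigl(A\cap T^{-1}(B)\bigr)=\int_{B}\nu_{0|T}(\yy,A)\,(T_{\sharp}\nu)(\rd\yy),
\]
i.e.\ $\nu_{0|T}$ is a version of $\nu_{|T}$.

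It then remains to translate the resulting ``$T_{\sharp}\nu$-almost everywhere'' equality into the claimed ``$\nu_{0}$-almost every $x$'' statement. Since $\frac{\rd\nu}{\rd\nu_{0}}=\frac{1}{Z}\exp(-\PPhi\circ T)$ is strictly positive, $\nu$ and $\nu_{0}$ are mutually absolutely continuous, hence so are $T_{\sharp}\nu$ and $T_{\sharp}\nu_{0}$; thus the $T_{\sharp}\nu$-null set $M\subseteq\YY$ off which $\nu_{|T}(\yy,\cdot)=\nu_{0|T}(\yy,\cdot)$ is also $T_{\sharp}\nu_{0}$-null, and $T^{-1}(M)$ is $\nu_{0}$-null, which is exactly the assertion. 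I expect the only genuinely delicate points to be citing the existence and essential uniqueness of regular conditional distributions on Polish spaces correctly, and the (routine) monotone-class/monotone-convergence extension needed because $\exp(-\PPhi)$ is only $\nu_{0}$-integrable rather than bounded; everything else is the short computation above.
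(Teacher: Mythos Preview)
Your proposal is correct and follows essentially the same strategy as the paper: both verify that $\nu_{0|T}$ satisfies the defining identity of a regular conditional distribution for $\nu$ given $T$, exploiting that the density $\tfrac{1}{Z}\exp(-\PPhi\circ T)$ factors through $T$. The only cosmetic differences are that the paper phrases the computation via the tower property $\Expect[\,\cdot\,]=\Expect[\Expect[\,\cdot\mid T(X_{0})]]$ and integrates over $\XX$ against $\nu$, whereas you invoke \Cref{propo:proj_meas} directly and integrate over $\YY$ against $T_{\sharp}\nu$; you also spell out the essential-uniqueness and mutual-absolute-continuity step that the paper leaves implicit.
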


The hypotheses of the proposition differ from the hypotheses of \Cref{propo:proj_meas} only in the additional assumption that $\XX$ and $\YY$ are Polish spaces.
This assumption ensures that we may apply the disintegration theorem to obtain regular conditional distributions.
We can weaken the assumption by requiring that $\XX$ and $\YY$ be merely Radon spaces.

\smallskip

\begin{proof}[Proof of \Cref{propo:equality_of_RCDs}]
	The regular conditional distribution $\nu_{0|T}\colon \YY \times \Borel{\XX} \to [0,1]$ of $X_0$ given $T(X_0)$ is defined as a Markov kernel satisfying for every $A\in\Borel{\XX}$ and $B\in\Borel{\YY}$,
	\[
		\Prob(X_0\in A,\ T(X_0)\in B)
		=
		\int_B \nu_{0|T}(y, A) \, T_{\sharp} \nu_{0}(\rd y)
		=
		\int_{T^{-1}(B)} \nu_{0|T}(T (x), A)\, \nu_{0}(\rd x).
	\]
	Moreover, given the regular conditional distribution  $\nu_{0|T}$ we can express the conditional expectation of $g(X_0)$ given $T(X_0)$ for any measurable $g\colon \XX\to\Reals$ as
	\begin{equation}
		\label{eq:CE_RCD}
		\ev{g(X_0) \mid T(X_0)} = \int_{\XX} g(x) \ \nu_{0|T}(T(X_0), \rd  x)
		\qquad
		\Prob\text{-almost surely}.
	\end{equation}
	We ask now for the regular conditional distribution $\nu_{\vert T}$ of $X\sim \nu$ given $T(X)$.
	Analogously, this is a Markov kernel $\nu_{\vert T}\colon \YY \times \Borel{\XX} \to [0,1]$ satisfying
	\begin{align*}
		\Prob(X\in A,\ T(X)\in B)
		=
		\int_B \nu_{|T}(y, A) \, T_{\sharp}\nu(\rd y)
		=
		\int_{T^{-1}(B)} \nu_{|T}(T (x), A) \, \nu(\rd x)
	\end{align*}
	for every $A\in\Borel{\XX}$ and $B\in\Borel{\YY}$.
	Since $\Prob(X\in A,\ T(X)\in B) = \nu(A\cap T^{-1}(B))$, the statement follows if
	\begin{equation}
		\label{eq:remark_on_RCD_nu_identity}
		\nu(A\cap T^{-1}(B))
		=
		\int_{T^{-1}(B)} \nu_{0|T}(T (x), A)\, \nu(\rd x)
		\qquad
		\forall A\in\Borel{\XX} \  \forall B\in\Borel{\YY},
	\end{equation}
	because $\nu_{0|T}$ is then a valid regular conditional distribution of $X\sim \nu$ given $T(X)$.
	For $A\in\Borel{\XX}$ and $B\in\Borel{\YY}$,
	\begin{align*}
		\nu(A\cap T^{-1}(B)) & = \int_{\XX} \one_A(x) \ \one_B(T(x)) \ \frac 1Z \e^{-\Phi(x)} \ \nu_0(\rd x)
		= \ev{  \frac 1Z \e^{-\Phi(X_0)} \  \one_A(X_0) \  \one_B(T(X_0))},
	\end{align*}
	where $X_0\sim \nu_0$.
	Using the law of total expectation and the hypothesis that $\Phi(x) = \PPhi(T(x))$ for every $x\in \XX$, we obtain
	\begin{align*}
		\nu(A\cap T^{-1}(B)) & = \ev{ \ev{  \frac 1Z \e^{-\Phi(X_0)} \  \one_A(X_0) \  \one_B(T(X_0))
				\,\Big|\, T(X_0)} }\\
		& = \ev{ \frac 1Z \e^{-\Phi(X_0)} \ \one_B(T(X_0)) \ \ev{  \one_A(X_0) \mid T(X_0)} }.
	\end{align*}
	Applying now \eqref{eq:CE_RCD} to $g(x) = \one_A(x)$ yields $\ev{ \one_A(X_0) \mid T(X_0)} = \nu_{0|T}(T(X_0), A)$ and, thus,
	\begin{align*}
		\nu(A\cap T^{-1}(B)) & = \ev{ \frac 1Z \e^{-\Phi(X_0)} \ \one_B(T(X_0)) \ \nu_{0|T}(T(X_0), A) }\\
		& = 	\int_{T^{-1}(B)} \frac 1Z \e^{-\Phi(x)} \ \nu_{0|T}(T(x), A) \ \nu_0(\rd x)\\
		& =  \int_{T^{-1}(B) } \nu_{0|T}(T(X_0), A) \ \nu(\rd x)
	\end{align*}
	which shows \eqref{eq:remark_on_RCD_nu_identity}.
\end{proof}

We now turn to the setting of a finite dimensional sphere, where $\XX=\Reals^d$, $\YY=\Sphere^{d-1}$, and $T=\RadProjToSphere$.
In order to implement the reprojection method, it suffices by \Cref{propo:equality_of_RCDs} to simulate the regular conditional distribution of $X_{0}\sim \Normal(0,C)$ given $\RadProjToSphere(X_{0}) = \xx$.
In the following result $\Gam(a,b)$ denotes the Gamma distribution with shape parameter $a>0$ and inverse scale parameter $b>0$.

\begin{proposition}
	\label{propo:nu_T}
	Let $\nu_0 = \Normal(0,C)$ be given on $\Reals^d$ and let $\nu_{0 \mid \RadProjToSphere}(\xx, \quark)$ denote the conditional distribution of $X_{0} \sim \nu_{0}$ given $\RadProjToSphere(X_{0}) = \xx$.
	Then, for a non-negative real-valued random variable $R$ satisfying $R^2 \sim \Gam\left(\frac{d}{2}, \frac{1}{2} \xx^\top C^{-1} \xx\right)$,
	\[
		R \xx \sim \nu_{0 \mid \RadProjToSphere}(\xx, \quark).
	\]
\end{proposition}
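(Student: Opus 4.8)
The plan is to pass to polar coordinates on $\Reals^d$ and read off the conditional law of the radial part. Since $C$ is positive definite, $\nu_0 = \Normal(0,C)$ admits the Lebesgue density $p(x) = (2\pi)^{-d/2}(\det C)^{-1/2}\exp(-\tfrac12 x^\top C^{-1}x)$; in particular $\nu_0(\{0\}) = 0$, so the distinguished point $\bar z$ in \eqref{eq:radial_projection_map_T} is irrelevant and $\RadProjToSphere(X_0) = X_0/\norm{X_0}$ holds $\nu_0$-almost surely. Writing $x = ru$ with $r = \norm{x} > 0$ and $u = \RadProjToSphere(x) \in \Sphere^{d-1}$, the polar decomposition of Lebesgue measure reads $\rd x = r^{d-1}\,\rd r\,\Hausdorff_{\Sphere^{d-1}}(\rd u)$. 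Consequently, the joint law of $(R,U) \defeq (\norm{X_0},\RadProjToSphere(X_0))$ on $(0,\infty)\times\Sphere^{d-1}$ has density $g(r,u) \defeq p(ru)\,r^{d-1}$ with respect to the product of Lebesgue measure on $(0,\infty)$ and $\Hausdorff_{\Sphere^{d-1}}$.

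Next I would identify the marginal and conditional densities obtained from $g$. The marginal density of $U$ is $\rho(u) = \int_0^\infty g(r,u)\,\rd r$, which is exactly the \ac{ACG} density recalled in \Cref{ssec:ACG}; since $C$ is positive definite, $\rho(u) \in (0,\infty)$ for every $u \in \Sphere^{d-1}$. Hence the conditional density of $R$ given $U = u$ is $r \mapsto g(r,u)/\rho(u)$, which is proportional to $r^{d-1}\exp(-\tfrac12 r^2 u^\top C^{-1}u)$ on $(0,\infty)$. The substitution $t = r^2$ turns this into a density proportional to $t^{d/2-1}\exp(-\tfrac12 t\,u^\top C^{-1}u)$, i.e.\ the density of $\Gam(\tfrac d2,\tfrac12 u^\top C^{-1}u)$; equivalently, if $R$ is any non-negative variable with $R^2 \sim \Gam(\tfrac d2,\tfrac12 u^\top C^{-1}u)$, then the density of $R$ is $\tfrac{2b^{d/2}}{\Gamma(d/2)}r^{d-1}\e^{-br^2}$ with $b = \tfrac12 u^\top C^{-1}u$, which matches the conditional density above after checking normalisation via $\int_0^\infty r^{d-1}\e^{-br^2}\,\rd r = \tfrac{\Gamma(d/2)}{2b^{d/2}}$. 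This is the only place where a (routine) computation is needed.

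Finally I would assemble these facts into the statement about the regular conditional distribution. Define the Markov kernel $\kappa$ on $\Sphere^{d-1}\times\Borel{\Reals^d}$ by letting $\kappa(\xx,\quark)$ be the distribution of $R\xx$, with $R$ as in the proposition; this is measurable in $\xx$ because the Gamma parameters depend continuously on $\xx$. Using the polar decomposition once more, for every $A \in \Borel{\Reals^d}$ and $B \in \Borel{\Sphere^{d-1}}$,
\[
	\nu_0\bigl(A \cap \RadProjToSphere^{-1}(B)\bigr)
	= \int_B \int_0^\infty \one_A(ru)\,g(r,u)\,\rd r\,\Hausdorff_{\Sphere^{d-1}}(\rd u)
	= \int_B \kappa(u,A)\,\rho(u)\,\Hausdorff_{\Sphere^{d-1}}(\rd u),
\]
where the second equality writes $g(r,u) = \rho(u)\cdot\bigl(g(r,u)/\rho(u)\bigr)$ and uses the identification of $g(\quark,u)/\rho(u)$ with the density of $R$, hence of $\kappa(u,\quark)$ along the ray $\{ru : r>0\}$. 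Since $\RadProjToSphere_\sharp\nu_0 = \ACG(C)$ has density $\rho$ with respect to $\Hausdorff_{\Sphere^{d-1}}$, the right-hand side equals $\int_B \kappa(u,A)\,(\RadProjToSphere_\sharp\nu_0)(\rd u)$, which is precisely the defining identity of the regular conditional distribution of $X_0$ given $\RadProjToSphere(X_0)$. Thus $\kappa$ is a version of $\nu_{0\mid\RadProjToSphere}$, and by essential uniqueness of disintegrations on Polish spaces any version agrees with it $\ACG(C)$-almost everywhere; in particular $R\xx \sim \nu_{0\mid\RadProjToSphere}(\xx,\quark)$, as claimed. The main (and fairly minor) obstacle is the bookkeeping in the polar change of variables and the resulting identification of $g(\quark,u)/\rho(u)$ as the radial conditional density.
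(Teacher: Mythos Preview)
Your proof is correct and follows essentially the same route as the paper: pass to polar coordinates, identify the conditional density of the radius as proportional to $r^{d-1}\exp(-\tfrac12 r^2\,\xx^\top C^{-1}\xx)$, and change variables $r\mapsto r^2$ to recognise the Gamma law. You are simply more careful than the paper in explicitly verifying the disintegration identity and the measurability of the resulting kernel, whereas the paper argues directly at the level of densities.
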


\begin{proof}
We can write $X_{0}\sim \nu=\Normal (0,C)$ as $X_{0} = R\widebar{X}_{0}$, $\widebar{X}_{0} \defeq \RadProjToSphere(X_{0})$, $R \defeq \norm{ X_{0} }$.
Thus, the condition of $\RadProjToSphere(X_{0}) = \xx$ yields the following conditional density of $R$:
\[
	f_{R|\RadProjToSphere(X_{0}) = \xx}(r)
	\ \propto \
	r^{d-1} \exp\left(-\frac{1}{2} ( \xx^\top C^{-1} \xx) r^2 \right)
	=
	r^{d-1} \left( \exp\left(-\frac{1}{2} \xx^\top C^{-1} \xx\right)\right)^{r^2}.
\]
By the change of variables $r \mapsto r^2 \qefed r_2$ we obtain the following probability density for $R^2$ given $\RadProjToSphere(X_{0}) = \xx$:
\[
	f_{R^2|\RadProjToSphere(X_{0}) = \xx}(r_2)
	\ \propto \
	\frac{r_2^{(d-1)/2}}{2r_2^{1/2}} \left( \e^{-\frac{1}{2} \xx^\top C^{-1} \xx}\right)^{r_2}
	\ \propto \
	r_2^{d/2-1} \e^{- \left(\frac{1}{2} \xx^\top C^{-1}\xx \right) {r_2}}.
\]
Thus, $R^2$ conditioned on $\RadProjToSphere(X_{0})=\xx$ follows the $\Gam(\frac{d}{2}, \tfrac{1}{2} \xx^\top C^{-1} \xx)$, as desired.
\end{proof}

\paragraph{Resulting algorithms}
We now provide two explicit algorithms for approximate sampling of target measures $\mu$ on $\Sphere^{d-1}$ as given in \eqref{eq:push_post}.
\Cref{alg:pCN_Sphere,alg:ESS_Sphere} result from applying the push-forward transition kernel approach to the \ac{pCN}-\ac{MH} algorithm and the \ac{ESS} algorithm on $\HH$, respectively.
The \ac{pCN}-\ac{MH} algorithm and the \ac{ESS} algorithm on $\HH$ were stated in \Cref{alg:pCN_on_H,alg:ell_slice_sampling}.
\begin{algorithm}
  \caption{Reprojected \ac{pCN}-\ac{MH} algorithm on $\Sphere^{d-1}$} \label{alg:pCN_Sphere}
 \begin{algorithmic}[1]
  \STATE \textbf{Given:} ACG prior $\mu_0 = \ACG (C)$ and target $\mu$ as in \eqref{eq:push_post}
  \STATE \textbf{Initial:} step size $s\in(0,1]$ and state $\xx_0 \in \Sphere^{d-1}$
  \FOR{$k \in\Naturals_0$}
	\STATE Draw a sample $r_k^2$ of $ \Gam\left(d/2, \frac{1}{2} \xx_k^\top C^{-1} \xx_k\right)$ and set $x_k = r_k \xx_k$
	\STATE Draw a sample $w_k$ of $\Normal(0, C)$ and set $y_{k+1} \defeq  \sqrt{1-s^2} x_k + s w_k  $
	\STATE Set $\yy_{k+1} \defeq  y_{k+1} / \norm{ y_{k+1} }$
	\STATE Compute $a \defeq  \min\{1, \exp\left(\bar\Phi(\xx_{k}) - \bar\Phi(\yy_{k+1}) \right)\}$
	\STATE Draw a sample $u$ of $\Uniform[0,1]$
	\IF{$u\leq a$}
		\STATE Set $\xx_{k+1} = \yy_{k+1}$
	\ELSE
		\STATE Set $\xx_{k+1} = \xx_{k}$
	\ENDIF
  \ENDFOR
    \end{algorithmic}
\end{algorithm}
\begin{algorithm}
  \caption{Reprojected \ac{ESS} algorithm on $\Sphere^{d-1}$} \label{alg:ESS_Sphere}
 \begin{algorithmic}[1]
  \STATE \textbf{Given:} ACG prior $\mu_0 = \ACG (C)$ and target $\mu$ as in \eqref{eq:push_post}
  \STATE \textbf{Initial:} state $\xx_0 \in \Sphere^{d-1}$
  \FOR{$k \in\Naturals_0$}
	\STATE Draw a sample $t\sim \Uniform[0,\exp(-\bar\Phi(\xx_k))]$
	\STATE Draw a sample $r_k^2$ of $ \Gam\left(d/2, \frac{1}{2} \xx_k^\top C^{-1} \xx_k\right)$ and set $x_k = r_k \xx_k$
	\STATE Set $x_{k+1}=\text{shrink-ellipse}(x_k,t)$ \; (see \Cref{alg:ell_shrink})
	\STATE Set $\xx_{k+1}=x_{k+1}/\norm{ x_{k+1} }$
\ENDFOR
    \end{algorithmic}
\end{algorithm}
According to \Cref{lem:RuSpru}\ref{item:RuSpru_3}, \Cref{alg:pCN_Sphere} yields an \ac{MH} algorithm on $\Sphere^{d-1}$.
Its acceptance probability is simply $\widebar\alpha(\xx,\yy) =  \min\{1, \exp\left(\bar\Phi(\yy )- \bar\Phi(\xx) \right)\}$, for $\xx,\yy \in \Sphere^{d-1}$, and its proposal kernel $\widebar Q\colon \Sphere^{d-1} \times \Borel{\Sphere^{d-1}} \to [0, 1]$ admits a proposal density $\widebar q\colon \Sphere^{d-1} \times \Sphere^{d-1} \to (0,\infty)$ \wrt the Hausdorff measure on $\Sphere^{d-1}$ given by
\begin{equation}\label{eq:qbar}
	\widebar q(\xx,\yy)
	=
	\int_0^\infty
	\int_0^\infty
	q(r\xx, r' \yy)
	\
	f_{\xx}(r) \, (r')^{d-1} \, \rd r \, \rd r' > 0,
	\qquad
	\xx,\yy \in \Sphere^{d-1},
\end{equation}
where $q$ denotes the proposal density of the \ac{pCN} proposal kernel $Q(x,\quark) = \Normal(\sqrt{1-s^2}x,s^2 C)$, $x\in\Reals^d$, and $f_{\xx}(r)$ denotes the conditional density of $R = \norm{ X }$ for $X\sim \Normal(0,C)$ given $\RadProjToSphere(X) = \xx$.
According to the proof of \Cref{propo:nu_T}, the density $f_{\xx}(r)$ takes the form
\begin{equation}\label{eq:fxx}
	f_{\xx}(r)
	\defeq
	\frac 1{c_{\xx}} r^{d-1} \exp\left(-\frac{r^2}{2} \xx^\top C^{-1} \xx\right),
	\qquad
	c_{\xx} \defeq  \int_0^\infty r^{d-1} \exp\left(-\frac{r^2}{2} \xx^\top C^{-1} \xx\right) \rd r.
\end{equation}

\begin{remark}
Due to the generality of the pushforward Markov kernel approach, it is also possible to combine the reprojection methodology that we proposed with other common MH algorithms, such as the dimension-independent Hamiltonian Monte Carlo (HMC) algorithm of \cite{Beskos_etal_2011}.
However, an extensive investigation of a reprojected HMC algorithm that shows advantages and disadvantages, e.g.\ in comparison to the work of \citet{LanZhouShahbaba2014}, is beyond the scope of this paper.
\end{remark}

\subsection{Uniform and geometric ergodicity}

We investigate the exponential convergence behaviour of the transition kernels that correspond to the Markov chains that are realised either by \Cref{alg:pCN_Sphere} or \Cref{alg:ESS_Sphere}.
Since the underlying state space $\Sphere^{d-1}$ is compact, we aim for uniform ergodicity.
The associated transition kernel $\widebar{K}\colon \Sphere^{d-1} \times \Borel{\Sphere^{d-1}} \to [0, 1]$ is said to be \emph{uniformly
	ergodic}, if there are $\kappa \in [0,1)$ and $c<\infty$ such that
\begin{equation}\label{eq:uniform_ergodic}
	\norm{ \widebar{K}^{n}(\xx) - \mu }_{\TV}
	\leq c \, \kappa^{n}
	\qquad
	\forall \xx \in \Sphere^{d-1}.
\end{equation}
It is well known
\citep[e.g.][Theorem~16.0.2]{MeTw09}
that uniform ergodicity of a Markov chain is equivalent to the \emph{smallness} of the whole state space.
A set $B \in\Borel{ \Sphere^{d-1}}$ is called \emph{small} \wrt a transition kernel $\widebar{K}$ if there exists some $m\in\Naturals$ and a nonzero measure $\phi$ on $(\Sphere^{d-1},\Borel{ \Sphere^{d-1}})$ such that
\begin{equation}\label{eq:small}
	\widebar{K}^m(\xx, A) \geq \phi(A) \qquad \forall A \in \Borel{\Sphere^{d-1}},\ \xx \in B.
\end{equation}
In particular, if \eqref{eq:small} holds for $B=\Sphere^{d-1}$, then \citep[Theorem~16.2.4]{MeTw09} yields that
\begin{equation}
\label{eq:smallness_unif_erg}
	\norm{ \widebar{K}^{n}(\xx) - \mu }_{\TV}
	\leq (1-\phi(\Sphere^{d-1}))^{n/m-1}
\end{equation}
By exploiting the particular structure \eqref{eq:TM_gen} of push-forward transition kernels, we obtain the following result.

\begin{theorem}
\label{theo:Uniform_ergodic}
Let $\PPhi\colon \Sphere^{d-1} \to \Reals$ be uniformly bounded.
Then the transition kernels corresponding to the Markov chains realised by \Cref{alg:pCN_Sphere,alg:ESS_Sphere} are uniformly ergodic.
\end{theorem}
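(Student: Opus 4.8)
The plan is to establish uniform ergodicity by showing that the entire state space $\Sphere^{d-1}$ is small for each of the two transition kernels, using the characterisation recalled just before the theorem. Since $\Sphere^{d-1}$ is compact and $\PPhi$ is uniformly bounded, one expects a one-step minorisation, i.e.\ $m = 1$, with a minorising measure that can be taken to be (a constant multiple of) the Hausdorff measure $\Hausdorff_{\Sphere^{d-1}}$ on the sphere. Concretely, I would aim to show that there is a constant $c_0 > 0$ with $\widebar{K}(\xx, A) \geq c_0 \, \Hausdorff_{\Sphere^{d-1}}(A)$ for all $\xx \in \Sphere^{d-1}$ and all $A \in \Borel{\Sphere^{d-1}}$.

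First I would treat the reprojected \ac{pCN}-\ac{MH} algorithm (\Cref{alg:pCN_Sphere}). By the third item of \Cref{lem:RuSpru}, its transition kernel is a \ac{MH} kernel with acceptance probability $\widebar\alpha(\xx,\yy) = \min\{1, \exp(\bar\Phi(\xx) - \bar\Phi(\yy))\}$ and proposal density $\widebar q(\xx, \yy)$ \wrt $\Hausdorff_{\Sphere^{d-1}}$ given explicitly in \eqref{eq:qbar}. The key observations are: (i) since $\PPhi$ is uniformly bounded, say $\absval{\PPhi} \leq M$, we have $\widebar\alpha(\xx, \yy) \geq \e^{-2M}$ for all $\xx, \yy$; and (ii) the proposal density $\widebar q$ is bounded below by a positive constant uniformly in $\xx, \yy \in \Sphere^{d-1}$. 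For (ii) one inspects \eqref{eq:qbar}: the Gaussian \ac{pCN} density $q(r\xx, r'\yy)$ is continuous and strictly positive, $f_{\xx}(r)$ is a bona fide probability density in $r$ for each $\xx$, and by compactness of $\Sphere^{d-1}$ and continuity in $\xx$ (the map $\xx \mapsto \xx^\top C^{-1} \xx$ is continuous and bounded away from $0$ and $\infty$ on the sphere, since $C$ is positive definite), the double integral is continuous and strictly positive on the compact set $\Sphere^{d-1} \times \Sphere^{d-1}$, hence attains a positive minimum $q_{\min} > 0$. Combining, for the non-rejection part of the \ac{MH} kernel \eqref{eq:MH_transition_kernel} we get $\widebar{K}(\xx, A) \geq \int_A \widebar\alpha(\xx,\yy)\, \widebar q(\xx,\yy)\, \Hausdorff_{\Sphere^{d-1}}(\rd\yy) \geq \e^{-2M} q_{\min}\, \Hausdorff_{\Sphere^{d-1}}(A)$, which is the desired minorisation with $c_0 = \e^{-2M} q_{\min}$ and $\phi = c_0 \Hausdorff_{\Sphere^{d-1}}$.

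For the reprojected \ac{ESS} algorithm (\Cref{alg:ESS_Sphere}) the argument is structurally the same but one must work with the hybrid slice-sampling transition rather than a \ac{MH} kernel. Here I would exploit the representation \eqref{eq:TM_gen} of the push-forward kernel: one draws $x = r\xx$ via the Gamma conditional of \Cref{propo:nu_T}, then applies one shrink-ellipse step, then reprojects. Because $\PPhi$ (hence $\Phi$ on $\HH$) is uniformly bounded by $M$, the slice level $t$ drawn uniformly from $(0, \e^{-\PPhi(\xx_k)})$ satisfies $t \leq \e^{M}$ always, so with probability at least $\e^{-2M}$ one has $t \leq \e^{-M} \leq \exp(-\PPhi(\yy))$ for \emph{every} $\yy \in \Sphere^{d-1}$; conditionally on such a $t$, the super-level set is the whole sphere and the shrink-ellipse procedure accepts the very first proposal $\cos(\theta) x + \sin(\theta) w$ with $\theta \sim \Uniform[0, 2\pi]$, $w \sim \Normal(0,C)$. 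Reprojecting this proposal to the sphere yields a density \wrt $\Hausdorff_{\Sphere^{d-1}}$ that, after an analogous compactness-and-positivity argument (continuity and strict positivity of the resulting density in the current point and the target point on the compact product sphere), is bounded below by a positive constant uniformly. This again produces a one-step minorisation $\widebar{K}(\xx, \cdot) \geq \phi$ with $\phi$ a positive multiple of $\Hausdorff_{\Sphere^{d-1}}$, and uniform ergodicity follows from \cite[Theorem~15.3.1]{DoucEtAl2018}.

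The main obstacle I anticipate is the clean verification of the uniform positive lower bound on the relevant densities in step (ii) and its \ac{ESS} analogue --- specifically, making rigorous that the integrals defining $\widebar q$ (and the reprojected shrink-ellipse transition density) are continuous as functions of the base point $\xx$, so that compactness of $\Sphere^{d-1}$ can be invoked to extract a strictly positive minimum. This requires a dominated-convergence argument over the radial integrations, using that $\xx^\top C^{-1}\xx$ stays in a compact subinterval of $(0,\infty)$ as $\xx$ ranges over the sphere, so the Gaussian tails provide an integrable dominating function uniformly in $\xx$. Once continuity is in hand, strict positivity is immediate since all integrands are strictly positive, and the rest of the argument is routine bookkeeping with the \ac{MH} kernel formula \eqref{eq:MH_transition_kernel} and the definition \eqref{eq:small} of smallness.
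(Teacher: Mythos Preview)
Your treatment of the reprojected \ac{pCN}-\ac{MH} case is correct and essentially identical to the paper's: bound the acceptance probability below via $|\PPhi|\leq M$, bound the proposal density $\widebar q$ below using compactness of $\Sphere^{d-1}$ and boundedness of $\xx\mapsto\xx^\top C^{-1}\xx$, and conclude smallness of the whole sphere with minorising measure a multiple of $\Hausdorff_{\Sphere^{d-1}}$.

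For the reprojected \ac{ESS} case your approach is valid but genuinely different from the paper's. The paper does \emph{not} argue directly with the shrink-ellipse mechanism. Instead it invokes a black-box result (\cite[Lemma~3.4]{NataElAl2021}) stating that, for bounded $\Phi$, every compact set in $\Reals^d$ is small for the \ac{ESS} kernel $K$ with minorising measure a multiple of Lebesgue measure restricted to that set. It then fixes $B=B_1(0)$, uses the integral representation \eqref{eq:TM_gen} to write $\widebar K(\xx,A)=\int_0^\infty K(r\xx,T^{-1}(A))\,f_{\xx}(r)\,\rd r$, restricts the radial integral to $[0,1]$ (so that $r\xx\in B$), inserts the minorisation for $K$, and computes $\lambda_B(T^{-1}(A))$ in polar coordinates to recover a multiple of $\Hausdorff_{\Sphere^{d-1}}(A)$. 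Your route --- observing that with probability at least $\e^{-2M}$ the slice level satisfies $t\leq \e^{-M}$, so that the very first ellipse proposal is accepted, and then bounding the reprojected density of $\cos(\theta)\,r\xx+\sin(\theta)\,w$ from below --- is more self-contained (no external lemma needed) but requires you to carry out the density-lower-bound argument for that mixed Gaussian/Gamma/uniform proposal explicitly. The paper's route is shorter and more modular because it offloads the hard part to the cited smallness result for \ac{ESS} on $\Reals^d$; yours has the advantage of not depending on that reference.
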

\begin{proof}
The idea of the proof is to show that, in both cases, the state space is small.

We first consider the reprojected \ac{pCN}-\ac{MH} kernel $\widebar{K}$.
The boundedness of $\PPhi$, i.e.\ $\underline{c} \leq \PPhi(\xx) \leq \overline{c}$, yields the following lower bound on the acceptance probability:
\[
	\widebar{\alpha}(\xx,\yy) = \min\left\{1, \exp\left(\PPhi(\yy) - \PPhi(\xx) \right) \right\}
	\geq
	\exp(\underline{c} - \overline{c}) > 0.
\]
Hence, by the corresponding \ac{MH} form of the reprojected \ac{pCN}-\ac{MH} kernel $\widebar{K}$ stated in \Cref{lem:RuSpru}, for any $A \in\Borel{\Sphere^{d-1}}$ and any $\xx \in \Sphere^{d-1}$,
\[
	\widebar{K}(\xx, A)
	\geq
	\int_A
	\widebar{\alpha}(\xx,\yy)
	\
	\widebar Q(\xx, \rd \yy)
	\geq
	\exp(\underline{c} - \overline{c})
	\
	\widebar Q(\xx, A).
\]
Recall that $\widebar Q$ possesses the density $\widebar q$ given in \eqref{eq:qbar}.
Note that $\xx \mapsto \xx^\top C^{-1}\xx$ is bounded on $\Sphere^{d-1}$, such that there exists for every $r>0$ a lower bound $\underline{f}(r)>0$ satisfying $f_{\xx}(r) \geq \underline{f}(r) >0$ for every $\xx\in \Sphere^{d-1}$.
Moreover, the density $q(\quark, y)$ of the \ac{pCN} proposal kernel $Q(x,\quark) = \Normal(\sqrt{1-s^2}x,s^2C)$ is continuous.
Therefore, $q(x,y)$ is uniformly bounded away from zero for any $x,y \in \Reals^{d}$ with $\norm{ x },\norm{ y } \leq 1$.
Hence, there exists some $\epsilon>0$ such that, for every $\xx,\yy \in \Sphere^{d-1}$, $\widebar q(\xx,\yy)$ in \eqref{eq:qbar} satisfies $\widebar q(\xx,\yy)\geq \epsilon$.
This implies that $\widebar Q(\xx, A) \geq \epsilon \Hausdorff_{\Sphere^{d-1}}(A)$, for the Hausdorff measure $\Hausdorff_{\Sphere^{d-1}}$ on $\Sphere^{d-1}$.
Thus, $\Sphere^{d-1}$ is small \wrt $\widebar{K}$ with $\phi(A) \defeq  \epsilon \exp(\underline{c} - \overline{c})  \Hausdorff_{\Sphere^{d-1}}(A)$.

Next, we consider the reprojected \ac{ESS} kernel.
Since $\Phi$ on $\HH=\Reals^{d}$ is constructed from $\PPhi$ on $\Sphere$ by \eqref{eq:potential_lifted_to_ambient_Hilbert_space}, the boundedness of $\PPhi$ implies the boundedness of $\Phi$.
Hence, any compact set $B \subset \Reals^{d}$ is small \wrt the \ac{ESS} transition kernel $K$.
The measure \wrt which the smallness property holds is $\phi = \epsilon_B \lambda_B$, where $\epsilon_B>0$ denotes a constant and $\lambda_B$ is the Lebesgue measure restricted to a compact set $B$ with positive $d$-dimensional Lebesgue measure; see \cite[Lemma~3.4]{NataElAl2021}.
We now use this fact in order to show the smallness of $\Sphere^{d-1}$ \wrt the reprojected \ac{ESS} transition kernel $\widebar{K}$ for the measure $\phi = \epsilon \Hausdorff_{\Sphere^{d-1}}$ for appropriately chosen $\epsilon>0$, see below.
To this end, we apply the representation \eqref{eq:TM_gen} with $\XX=\Reals^{d}$, $\YY=\Sphere^{d-1}$ and $T = \RadProjToSphere$:
\[
	\widebar{K}(\xx, A)
	=
	\int_{0}^\infty
	K(r\xx , T^{-1}(A)) f_{\xx}(r) \, \rd r
\]
with $f_{\xx}$ as in \eqref{eq:fxx}.
Again, since $\xx \mapsto \xx^\top C^{-1}\xx$ is bounded on $\Sphere^{d-1}$, there exists for every $r>0$ a lower bound $\underline{f}(r)>0$ such that, for every $\xx\in \Sphere^{d-1}$, $f_{\xx}(r) \geq \underline{f}(r) >0$.
Now fix $B \defeq  B_1(0) = \set{ x\in\Reals^d }{ \norm{ x }\leq 1 }$ and note that $B$ is small \wrt $K$.
Thus
\begin{align*}
	\widebar{K}(\xx, A)
	& \geq
	\int_{0}^{1}
	K(r\xx , T^{-1}(A)) \underline{f}(r) \, \rd r
	\geq
	\epsilon_B \lambda_B(T^{-1}(A)) \, \int_{0}^{1}  \underline{f}(r) \, \rd r
	= \epsilon \Hausdorff_{\Sphere^{d-1}}(A)
\end{align*}
where $\epsilon \defeq  \epsilon_B \int_{0}^{1}  \underline{f}(r) \, \rd r \, \int_{0}^{1}  u^{d-1} \, \rd u$, since $\lambda_B(T^{-1}(A)) = \int_A \int_{0}^{1} u^{d-1} \rd u \, \Hausdorff_{\Sphere^{d-1}}(\rd \xx)$.
\end{proof}

The boundedness assumption on $\PPhi$ in \Cref{theo:Uniform_ergodic} is rather mild.
It is satisfied if $\PPhi\colon \Sphere^{d-1} \to \Reals$ is continuous.
For example, in the Bayesian level set inversion and Bayesian density estimation problems considered in \Cref{sec:numerical_illustrations}, the corresponding $\PPhi$ is bounded.

\Cref{theo:Uniform_ergodic} yields uniform ergodicity in finite dimension.
In the last paragraph of the proof of \Cref{theo:Uniform_ergodic}, we considered the measure $\phi \defeq  \epsilon\exp(\underline{c} - \overline{c})   \Hausdorff_{\Sphere^{d-1}}$ for the reprojected \ac{pCN}-\ac{MH} kernel $\widebar{K}$ and the measure $\phi\defeq \epsilon \Hausdorff_{\Sphere^{d-1}}$ for the reprojected \ac{ESS} kernel.
Supposing that the pre-factors $\epsilon$ and $\epsilon\exp(\underline{c} - \overline{c})$ do not grow in $d$ and substituting these choices of $\phi$ in \eqref{eq:smallness_unif_erg} we observe that the corresponding $\kappa=(1-\phi(\Sphere^{d-1}))^{1/ m}$ in \eqref{eq:uniform_ergodic}, with $c=(1-\phi(\Sphere^{d-1}))^{-1}$, converges exponentially quickly to 1 as $d\to\infty $.
This is because the $(d-1)$-dimensional Hausdorff measure of $\Sphere^{d-1}$ is given by $\tfrac{2\pi^{d/2}}{\Gamma(d/2)}$, where $\Gamma(\quark)$ is the Gamma function, and because of the asymptotic behaviour of $\Gamma(\quark)$.

In the subsequent paragraph, we present a dimension-independent convergence behaviour, but in the context of geometric ergodicity as in \eqref{eq:geometric_ergodic}, and not in the context of uniform ergodicity as in \eqref{eq:uniform_ergodic}.

\paragraph{Dimension-independent geometric ergodicity.}
In order to study the geometric ergodicity of Markov chains generated by the reprojected \ac{pCN}-\ac{MH} and reprojected \ac{ESS} algorithms, we can exploit \Cref{lem:RuSpru}.
This lemma states that the spectral gaps of the reprojected transition kernels $\KK$ of \Cref{alg:pCN_Sphere,alg:ESS_Sphere} are at least as large as the spectral gaps of the transition kernels $K$ of \Cref{alg:pCN_on_H,alg:ell_slice_sampling} respectively.
In order to describe a dimension-independent spectral gap, we introduce the following notation: Given $\mu_0 = \ACG(C)$ with non-degenerate, trace-class covariance operator $C\colon \HH\to\HH$ on an infinite-dimensional separable Hilbert space $\HH$, let $\set{ e_j }{ j\in\Naturals }$ be a complete orthonormal system in $\HH$ consisting of the eigenvectors of $C$.
We now construct finite-dimensional approximations to the infinite-dimensional setting as follows: For $d\in\Naturals$, let $\mu^{(d)}_{0} = \ACG(C_d)$ denote the ACG measure on $\Sphere^{d-1}$ resulting from the marginal of $\Normal(0,C)$ on $\Span{e_1,\ldots,e_d}$ and consider the target measure $\mu^{(d)}$ on $\Sphere^{d-1}$ given by
\begin{equation}
	\label{eq:push_post_finite}
	\frac{\rd \mu^{(d)}}{\rd \mu^{(d)}_{0}}(\xx)
	\propto \exp( - \PPhi(\xx)),
	\quad
	\xx \in \Sphere^{d-1}.
\end{equation}
In order to apply $\PPhi$ to $\xx \in \Sphere^{d-1}$, we view $\Sphere^{d-1}$ as the ``equatorial'' subsphere $\{\xx_1 e_1 + \ldots + \xx_d e_d \in \HH \colon \xx = (\xx_1,\ldots,\xx_d) \in \Sphere^{d-1} \}$ of $\Sphere\subset\HH$.
Let
\begin{equation}
	\label{eq:post_finite}
	\frac{\rd \nu^{(d)}}{\rd \nu^{(d)}_{0}}(x)
	\propto \exp( - \Phi(x)),
	\quad
	x \in \Reals^{d},
\end{equation}
where $\nu^{(d)}_{0} = \Normal(0,C_d)$ and $\Phi(x) \defeq \PPhi(\RadProjToSphere(x))$ for $x\in\HH$, as in \eqref{eq:potential_lifted_to_ambient_Hilbert_space}.
In order to apply $\Phi$ to $x \in \Reals^{d}$, we view $\Reals^{d}$ as the subspace $\{x_1 e_1 + \ldots + x_d e_d \in \HH \colon x = (x_1,\ldots,x_d) \in \Reals^{d} \}$ of $\HH$.

For a reminder of the definition of $\gap_{\mu}(K)$ for a given measure $\mu$ and transition kernel $K$ we refer to \eqref{eq:gap}.
By \Cref{lem:RuSpru} we obtain the following result.

\begin{proposition}\label{propo:Comp}
	Let $\mu^{(d)}$ and $\nu^{(d)}$ be as in \eqref{eq:push_post_finite} and \eqref{eq:post_finite} respectively.
	Let $K^{(d)}$ denote the \ac{pCN}-\ac{MH} transition kernel targeting $\nu^{(d)}$ using a step size $s\in(0,1]$ in the proposal.
	If there exists a $\beta > 0$ such that
	\begin{equation}\label{eq:pCN_gap}
		\inf_{d\in\Naturals} \gap_{\nu^{(d)}}\left( K^{(d)} \right) \geq \beta
	\end{equation}
	then the reprojected \ac{pCN}-\ac{MH} transition kernel $\widebar{K}^{(d)} \defeq (\RadProjToSphere)_\sharp K^{(d)}$ targeting $\mu^{(d)}$ on $\Sphere^{d-1}$ satisfies
	\begin{equation}\label{eq:repro_pCN_gap}
		\inf_{d\in\Naturals} \gap_{\mu^{(d)}}\left( \widebar{K}^{(d)} \right) \geq \beta.
	\end{equation}
    The same statement holds for the reprojected \ac{ESS} transition kernel $\widebar{K}^{(d)} \defeq (\RadProjToSphere)_\sharp K^{(d)}$.
\end{proposition}

Dimension independence of the spectral gap of the \ac{ESS} transition kernel has been demonstrated in the literature by numerical experiments \citep{NataElAl2021}.
However, to the best of our knowledge, no theoretical proof is available.
Therefore, we focus on the \ac{pCN}-\ac{MH} algorithm, for which \eqref{eq:pCN_gap} was shown by \citet{Hairer2014} under certain assumptions on $\Phi$.
For convenience, we summarise their result:

\begin{theorem}
\label{theo:vollmer}
Let $K^{(d)}$ denote the \ac{pCN}-\ac{MH} transition kernel targeting $\nu^{(d)}$ using the step size $s\in(0,1]$ in the proposal.
Suppose the following conditions hold:
\begin{enumerate}[label=(\alph*)]
\item \label{it:Hai_1}
There exist some $R>0$ and $\underline{\alpha} \in \Reals$ such that, for all $x\in\HH$ with $\norm{ x } > R$,
\[
	\Phi(y) < \Phi(x) - \underline{\alpha} \qquad
	\text{for all $y\in\HH$ with $\bignorm{ y - \sqrt{1-s^2}x } \leq \frac12 \bigl( 1-\sqrt{1-s^2} \bigr) \norm{ x }$.}
\]

\item \label{it:Hai_2}
The function $\e^{-\Phi}$ is integrable \wrt $\nu_0 = \Normal(0,C)$.

\item \label{it:Hai_3}
For every $\gamma>0$ there exists some $C_\gamma<\infty$ such that
\[
	|\Phi(x) - \Phi(y)| \leq C_\gamma \e^{\gamma r} \qquad \text{for all $x,y\in\HH$ with $\norm{ x }, \norm{ y } \leq r$.}
\]
\end{enumerate}
Then there exists a $\beta >0$ such that \eqref{eq:pCN_gap} holds.
\end{theorem}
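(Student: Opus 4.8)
The statement is the dimension-independent spectral gap theorem of \citet{Hairer2014}, and the plan is to follow their route: verify, with all constants uniform in $d$, the hypotheses of a weak Harris (Wasserstein-contraction) theorem for $K_s^{(d)}$, and then transfer the resulting Wasserstein gap to the $L^2(\nu^{(d)})$-gap in \eqref{eq:pCN_gap} using reversibility. Concretely there are four steps: (i) establish a geometric drift condition $(K_s^{(d)}V)(x)\le\lambda V(x)+b$ for all $x$ outside a bounded set, for some $\lambda<1$, $b<\infty$ and a Lyapunov function $V$; (ii) establish a one-step contraction on sublevel sets of $V$ by coupling two copies of the chain; (iii) invoke the weak Harris theorem to obtain a contraction rate $\rho<1$ for a suitable Wasserstein-type metric; and (iv) pass from this Wasserstein contraction to the $L^2$-spectral gap. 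Dimension-independence is built in: the \ac{pCN} proposal kernel $Q(x,\cdot)=\Normal(\sqrt{1-s^2}\,x,s^2C)$ and the hypotheses \ref{it:Hai_1}--\ref{it:Hai_3} on $\Phi$ make no reference to the ambient dimension, assumption \ref{it:Hai_2} guarantees that each $\nu^{(d)}$ is a well-defined probability measure, and the constants $\lambda,b,\rho$ produced below depend only on $s$, on $C$ through $\mathrm{tr}\,C$ and its leading eigenvalue, and on $R$, $\underline\alpha$, $C_\gamma$ --- none of which changes with $d$.

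For (i) I would take $V(x)\defeq 1+\norm{x}^2$. When $\norm{x}>R$, a \ac{pCN} proposal $y=\sqrt{1-s^2}\,x+sw$ lies in the ball of radius $\tfrac12(1-\sqrt{1-s^2})\norm{x}$ about $\sqrt{1-s^2}\,x$ with probability tending to $1$ as $\norm{x}\to\infty$ (Gaussian concentration, since that radius grows linearly in $\norm{x}$ while $\norm{sw}$ is $O(1)$), and on that event assumption \ref{it:Hai_1} forces the acceptance ratio $\e^{\Phi(x)-\Phi(y)}$ to exceed $\e^{\underline\alpha}$, so the proposal is accepted with probability bounded below and, when accepted, $\norm{X_1}\le\tfrac12(1+\sqrt{1-s^2})\norm{x}$. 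Since then $V(X_1)-V(x)=\norm{X_1}^2-\norm{x}^2\le -c\norm{x}^2$ while the rare complementary event contributes only a Gaussian tail, one obtains $\Expect[V(X_1)\mid X_0=x]\le\lambda\,V(x)$ for $\norm{x}$ large, hence a geometric drift with $\lambda<1$, $b<\infty$ and drift radius independent of $d$; integrating the drift against the invariant $\nu^{(d)}$ also yields the uniform moment bound $\sup_d\nu^{(d)}(V)\le b/(1-\lambda)$, which is needed later.

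For (ii), on a sublevel set $\{V\le M\}$ (so $\norm{x},\norm{x'}\le r_M$) I would couple the two chains by using the \emph{same} Gaussian increment $w$, so that the proposals satisfy $\norm{y-y'}=\sqrt{1-s^2}\,\norm{x-x'}$, and then couple the accept/reject decisions by a single uniform. Restricting to the event that $\norm{sw}$ is below a fixed threshold (which has probability bounded below), assumption \ref{it:Hai_3} bounds $|\Phi(x)-\Phi(y)|$ and $|\Phi(x')-\Phi(y')|$ uniformly, so both acceptance probabilities are bounded below by some $p_M>0$; under the shared-uniform coupling both chains then accept with probability at least $p_M$ times that of the threshold event, in which case $\norm{X_1-X_1'}=\sqrt{1-s^2}\,\norm{x-x'}$, a strict contraction, while on the complement $\norm{X_1-X_1'}\le 2r_M$. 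These drift and contraction estimates are exactly the hypotheses of the weak Harris theorem (see \citealp{Hairer2014} and the references therein), which yields a contraction $W_\varrho(\delta_xK_s^{(d)},\delta_{x'}K_s^{(d)})\le\rho\,\varrho(x,x')$, with $\rho<1$ uniform in $d$, for the distance $\varrho(x,y)\defeq(1\wedge\beta_0^{-1}\norm{x-y})^{1/2}(1+V(x)+V(y))^{1/2}$ and a sufficiently small $\beta_0>0$.

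Finally, for (iv), since $K_s^{(d)}$ is reversible \wrt $\nu^{(d)}$ and $\sup_d\nu^{(d)}(V)<\infty$, the $\varrho$-Wasserstein contraction --- which is a $V$-weighted total-variation contraction --- upgrades to a spectral gap in $L^2(\nu^{(d)})$ whose size is a function of $\rho$ and $\nu^{(d)}(V)$ only; this is the transfer argument of \citet{Hairer2014}, and it delivers $\beta>0$ satisfying \eqref{eq:pCN_gap}. I expect the two delicate points to be: the Wasserstein-to-$L^2$ transfer in step (iv), since a coupling/Wasserstein contraction is a priori weaker than an $L^2$-gap and the implication genuinely uses reversibility together with the moment bound; and the bookkeeping that keeps every constant in (i)--(iii) strictly dimension-free, for which it is essential to carry out the whole argument on the common Hilbert space $\HH$ and to invoke only $d$-independent quantities ($\mathrm{tr}\,C$, the leading eigenvalue of $C$, $R$, $\underline\alpha$, $C_\gamma$) rather than anything tied to $\Reals^d$.
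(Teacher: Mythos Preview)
The paper does not prove this theorem. It is stated explicitly as a summary of an external result: the sentence immediately preceding the theorem reads ``for which \eqref{eq:pCN_gap} was shown by \citet{Hairer2014} under certain assumptions on $\Phi$. For convenience, we summarise their result.'' There is therefore no proof in the paper to compare your proposal against.

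Your sketch is a faithful high-level outline of the \citet{Hairer2014} argument itself --- Lyapunov drift from \ref{it:Hai_1}, local Wasserstein contraction via a synchronous coupling and \ref{it:Hai_3}, the weak Harris theorem, and the reversibility-based transfer to $L^2$ --- so in that sense you are reproducing exactly what the paper is citing rather than proving.
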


The conditions of \Cref{theo:vollmer} are satisfied for constant $\Phi$.
Thus, the \ac{pCN}-\ac{MH} transition kernel exhibits a dimension-independent spectral gap when targeting the prior $\nu=\nu_0 =  \Normal(0,C)$.
By means of results by \cite{Vollmer2015} and \cite{RuSpru2018}, this can then be lifted to bounded perturbations of the prior measure, such as $\mu$ as in \eqref{eq:potential_lifted_to_ambient_Hilbert_space} for bounded $\PPhi$.
This yields our final result.

\begin{theorem}
\label{theo:main}
Let $\PPhi\colon \Sphere^{d-1} \to \Reals$ be uniformly bounded.
Then the transition kernel corresponding to the Markov chain realised by \Cref{alg:pCN_Sphere} has a dimension-independent spectral gap in the sense of \eqref{eq:repro_pCN_gap}.
\end{theorem}
\begin{proof}
Let $K^{(d)}$ denote the \ac{pCN}-\ac{MH} transition kernel in $\Reals^d$ for an arbitrary step size $s\in(0,1]$ and dimension $d\in\mathbb N$ with target $\nu^{(d)}$, and let %$\bar K_s^{(d)}$ denote the related reprojected \ac{pCN}-\ac{MH} transition kernel on $\Sphere^{d-1}$ targeting $\mu^{(d)}$. Analogously, let
$K_0^{(d)}$ denote the \ac{pCN}-\ac{MH} transition kernel that targets the prior $\nu_0^{(d)}$.
Note that $K_0^{(d)}$ coincides with the corresponding proposal kernel.
By \Cref{theo:vollmer} we know that there exists a $\beta>0$ such that
\[
		\inf_{d\in\Naturals} \gap_{\nu_0^{(d)}}\left( K_0^{(d)} \right) \geq \beta.
\]
By \cite[Theorem~11]{RuSpru2018} the transition operator $K^{(d)}$ is positive, and hence,
\[
	\gap_{\nu_0^{(d)}}\left( K_0^{(d)} \right)
	=
	1 - \Lambda_{\nu_0^{(d)}}\left(  K_0^{(d)} \right),
	\qquad
	\gap_{\nu^{(d)}}\left( K^{(d)} \right)
	=
	1 - \Lambda_{\nu^{(d)}}\left(  K^{(d)} \right),
\]
where $\Lambda_\mu(K)$ denotes the supremum of the spectrum of the restriction of a $\mu$-invariant transition operator $K$ to $L^2_{0,\mu}$, where
$L^2_{0,\mu}\defeq \set{f\in L^2(\mu)}{\int f(u)\mu(\rd u)=0}$, and $L^2(\mu)\defeq \set{f:\HH\to\Reals}{\norm{f}_{L^2_\mu}\defeq \int \absval{ f }^2\mu(\rd u)<\infty}$.
In the reversible case, $\Lambda_\mu(K) = \sup_{f \in L^2_{0,\mu}} \innerprod{ Kf}{f}_{L^2_{\mu}} / \norm{ f }_{L^2_{\mu}}$.
Now, if $\PPhi$ is bounded, then so is $\Phi$.
A comparison result \cite[Theorem~3.3]{Vollmer2015} states that
\[
	1 - \Lambda_{\nu^{(d)}}\left(  K^{(d)} \right)
	\geq
	\exp\left( 4 \inf \PPhi - 4 \sup \PPhi \right)
	\left( 1 - \Lambda_{\nu_0^{(d)}}\left(  K_0^{(d)} \right) \right).
\]
Applying this comparison result yields
\[
		\inf_{d\in\Naturals} \gap_{\nu^{(d)}}\left( K^{(d)} \right) \geq \exp\left(4 \inf \PPhi - 4 \sup \PPhi\right) \beta \ > \ 0
\]
which by \Cref{propo:Comp} yields the statement.
\end{proof}

\section{Numerical illustrations}
\label{sec:numerical_illustrations}

We now demonstrate the dimension-independent performance of the reprojected \ac{pCN}-\ac{MH} algorithm and the reprojected \ac{ESS} algorithm on two applications. 
\Cref{ssec:level_set_inversion}, rooted in inverse problems, treats an application to Bayesian binary classification or level set inversion. 
\Cref{ssec:numerical_density_estimation} has a more statistical flavour and  considers an application to Bayesian density estimation.
Readers whose main interest is in the second application may proceed directly to \Cref{ssec:numerical_density_estimation} but may wish to recall the definition \eqref{eq:RMSJD} of the root mean square jump distance with respect to the Riemannian metric on the sphere.
We will use these applications to illustrate the dimension-dependent performance of the geodesic random walk-\ac{MH} algorithm of \citet{MangoubiSmith2018} and the \ac{MH} algorithm of \citet{Zappa2018}.

\subsection{Bayesian binary level set inversion}
\label{ssec:level_set_inversion}
For the convenience of the reader, we give a self-contained description of (Bayesian) binary level set inversion, following the presentation of \citet{IglesiasLuStuart2016}.
Readers who are interested in technical details concerning the random fields perspective of level set inversion may consult Appendix 2 of that paper. For simplicity, we focus on the single-phase Darcy flow model or groundwater flow problem on a bounded domain $D\subset \Reals^k$, $k=1,2,3$, with closure $\overline{D}$ and boundary $\partial D$.
This problem is described by the elliptic \ac{PDE}
\begin{subequations}
\label{eq:PDE}
\begin{align}
	-\nabla \cdot (\e^{u} \nabla p) & = f\text{ in }D, \\
	p & = \kappa\text{ on }\partial D,
\end{align}
\end{subequations}
where $p$ denotes a fluid pressure field, $f$ describes sources and sinks, and $u$ is the log-permeability parameter.
Let $\U \defeq L^{\infty}(D)$ and $\V\defeq \set{ \tilde{p}\in H^{1}(D) }{ \tilde{p}=\kappa \text{ on } \partial D }$, where $H^{1}(D)$ is the Sobolev space of functions in $L^2(D)$ whose first-order weak derivatives have finite $L^2(D)$ norm.
If $u\in \U$, then $\e^{u}\in L^\infty(D)$, and given a suitable boundary condition $\kappa$ and source term $f \in L^2(D)$, a unique weak solution $p \in \V$ of \eqref{eq:PDE} exists.
Denote the solution map that maps the log-permeability $u$ to the corresponding solution $p$ by $\Upsilon \colon \U \to \V$.
Then $\Upsilon$ is locally Lipschitz continuous, e.g., \citep{BonitoEtAl2017}.

We assume now that the domain $D$ is divided into two disjoint regions $D_0, D_1\subset D$, i.e.\ $\overline{D} = D_0\cup D_1$.
The subdomains $D_i$ describe the location of different materials, e.g., background and abnormal material, with different constant log-permeabilities $u_0, u_1 \in \Reals$.
In particular, in binary level set inversion we assume that $u$ takes the form
\begin{equation}\label{eq:BLI_u}
	u(t) = u_0 \one_{D_0}(t) + u_1 \one_{D_1}(t), \qquad t \in D,
\end{equation}
where $\one_{D_i}$ denotes the indicator function of $D_i$ and where the values $u_0,u_1$ are known a priori.

The goal is then to infer the location of $D_1$ and, hence, $D_0 = \overline{D}\setminus D_1$ based on noisy observations of $p$, i.e.,
\begin{equation}\label{eq:data}
	y=O\circ \Upsilon (u)+\eta,
\end{equation}
where $O \colon \V \to \Reals^{J}$ denotes for some $J\in\Naturals$ a bounded linear observation operator and $\eta$ describes observational noise.
We assume that $\eta \sim \Normal(0,\Sigma)$ with known covariance $\Sigma\in\Reals^{J\times J}$.

In the level set approach we then introduce a so-called \emph{level set function} $g\in \Sphere(L^2(D)) \cap C(\overline{D})$ as well as the \emph{level set map} $\mathcal{L} \colon C(\overline{D}) \to \U$ defined by
\begin{equation}\label{eq:requirement_on_level_set_function}
	g\mapsto \mathcal{L}  (g)\defeq u_0 \one_{D_0(g)} + u_1 \one_{D_1(g)},
	\qquad
	D_1(g) \defeq \set{ t\in D }{ g(t) \geq 0},
\end{equation}
and $D_0(g)  = \set{ t\in D }{ g(t) < 0}$, respectively.
We can formulate the level set inverse problem as the problem of inferring the data-generating level set function $g$, given the model that the observations $y=O\circ \Upsilon \circ \mathcal{L}(g) + \eta$ are generated by a unique $g \in \Sphere(L^2(D)) \cap C(\overline{D})$.
We argue that the unit sphere in the function space $L^2(D)$ or $C(\overline D)$ is an advantageous setting for the level set inverse problem:
if we considered the problem in the ambient space $g \in C(\overline{D})$, then $g$ becomes non-identifiable, because $\mathcal{L}(\alpha g) =  \mathcal{L}(g)$ for any $\alpha >0$. 
For computational convenience, below we choose to work with the unit sphere in the Hilbert space $L^2(D)$ instead of the unit sphere in the Banach space $C(\overline D)\subset L^2(D)$.

For the Bayesian approach to level set inversion, we use a prior for the level set function $g$ in the form of a series expansion 
\begin{equation}
	\label{eq:KLE}
	g= g(\widebar{X}) = \sum_{i=1}^\infty \widebar{X}_i \phi_i
\end{equation}
where the $\phi_i \in C(\overline{D})$ are given and form an orthonormal system in $L^2(D)$, and the $\widebar{X}_i$ are random coefficients such that $\widebar{X} = (\widebar{X}_i)_{i\in\Naturals} \in \Sphere(\ell^2)$ almost surely; see e.g. \citep{IglesiasLuStuart2016, DunlopEtAl2017}. 
Then we have $g(\widebar{X}) \in \Sphere(L^2(D))$ almost surely.
A common choice for the system $\set{\phi_i}{ i \in \Naturals}$ are the eigenfunctions of a covariance operator $C\colon L^2(D) \to L^2(D)$ given by $C\phi(t) = \int_D c(s,t) \, \phi(s) \, \rd s$ and a continuous covariance function $c\in C(\overline{D} \times \overline{D})$.
The latter then yields $\phi_i \in C(\overline{D})$ for every $i\in\Naturals$, and by Mercer's theorem we have $g(\widebar{X}) \in C(\overline{D})$ almost surely.
In summary, we obtain a reformulation of the original level set inverse problem, as the problem of inferring the sequence $\xx\in \Sphere(\ell^2)$ that corresponds to the data-generating level set function $g(\xx)$, given the noisy data $y=O\circ \Upsilon \circ \mathcal{L}(g(\xx)) + \eta$.
For the prior $\mu_0$ on $\Sphere(\ell^2)$ in \eqref{eq:push_post} we consider the \ac{ACG} measure $\mu_0 = \ACG(\Lambda)$ where $\Lambda = \mathrm{diag}(\lambda_i\colon i \in \Naturals)$ involves the eigenvalues $(\lambda_i)_{i\in\Naturals}$ of the covariance operator $C$, i.e., $C\phi_i = \lambda_i\phi_i$. The associated distribution of $g(\widebar{X})$, $\widebar{X}\sim \ACG(\Lambda)$, on $\Sphere(L^2(D))$ is then $\ACG(C)$.

\begin{remark}[Boundedness of $\PPhi$]
Given that $\eta \sim \Normal(0,\Sigma)$, the negative log-likelihood $\PPhi$ for Bayesian level set inversion on $\Sphere$ takes the form
\[
	\PPhi(\xx) \defeq \frac{1}{2} \Norm{\Sigma^{-1/2} \left(y - O\circ \Upsilon \circ \mathcal{L}(g(\xx))  \right) }^2.
\]
Since the range of $\mathcal{L}$ is bounded in $L^\infty(D)$, i.e.\ $\norm{\mathcal{L}(g)}_{L^\infty(D)} \leq \max_{i} |u_i|$, and since $\Upsilon$ and $O$ are locally Lipschitz continuous and bounded respectively, we obtain that also $\PPhi$ is bounded on $\Sphere(\ell^2)$.
Thus, the setting of Bayesian level set inversion satisfies the assumptions of \Cref{theo:Uniform_ergodic,theo:main}.
In particular, for finite-dimensional approximations of the Bayesian level set inversion problem that are obtained by truncating the expansion \eqref{eq:KLE} after $d$ terms, \Cref{alg:pCN_Sphere,alg:ESS_Sphere,alg:Zappa_MCMC} yield uniformly ergodic Markov chains on $\Sphere^{d-1}$ targeting the corresponding posterior $\mu^{(d)}$.
Here, the corresponding posterior $\mu^{(d)}$ on $\Sphere^{d-1}$ may be obtained according to \eqref{eq:push_post_finite} or \eqref{eq:post_finite}.
\end{remark}

\paragraph{Problem setting.}
We consider the elliptic problem \eqref{eq:PDE} on $D = [0,1]$ with Dirichlet boundary conditions $p(0) = 0$ and $p(1) = 2$.
For $u$ we assume a form as in \eqref{eq:BLI_u} based on a decomposition of $D$ into two subregions $D_0,D_1$ with corresponding values $u_0 = -2$ and $u_1 = 2$.

For the level set function $g$ such that $u = \mathcal L(g)$ we assume a series expansion \eqref{eq:KLE} based on the eigensystem of the 
covariance function
\begin{equation}\label{eq:cov_func_exam}
	c(s,t)
	=
	\left(1 + \sqrt 3 \frac{|t-s|}{0.1} \right) \, \exp\left(- \sqrt3 \frac{|t-s|}{0.1} \right),
\end{equation}
i.e.\ a Whittle--Mat\'ern covariance with variance $\sigma^2 = 1$, correlation length $\rho = 0.1$ and smoothness $\nu = 1.5$.
For computational reasons we truncate the representation \eqref{eq:KLE} after $d$ terms and then infer the $d$ coefficients $\bar x_1,\ldots, \bar x_d$ %in the representation \eqref{eq:KLE} of $g$ 
given noisy observations of $p(0.2)$, $p(0.4)$, $p(0.6)$, and $p(0.8)$.

The assumed noise model is $\eta \sim \Normal (0, \Sigma)$ where $\Sigma = \mathrm{diag}(\sigma^2_1,\ldots,\sigma^2_4)$ and $\sigma^2_i = p^\dagger(0.2i)/10$ where $p^\dagger$ denotes the ``true'' solution resulting from the ``true'' coefficient vector $x^\dagger = (1,2,3,4,5,1,1,1,0,\ldots, 0)$ in the Karhunen--Lo\`{e}ve expansion \eqref{eq:KLE} of $g^\dagger$.
For an illustration of $g^\dagger$, $u^\dagger$ and $p^\dagger$, see \Cref{fig:truth}.

\begin{figure}
	\centering \includegraphics[trim = 20mm 0mm 20mm 0mm, clip,width=0.8\textwidth]{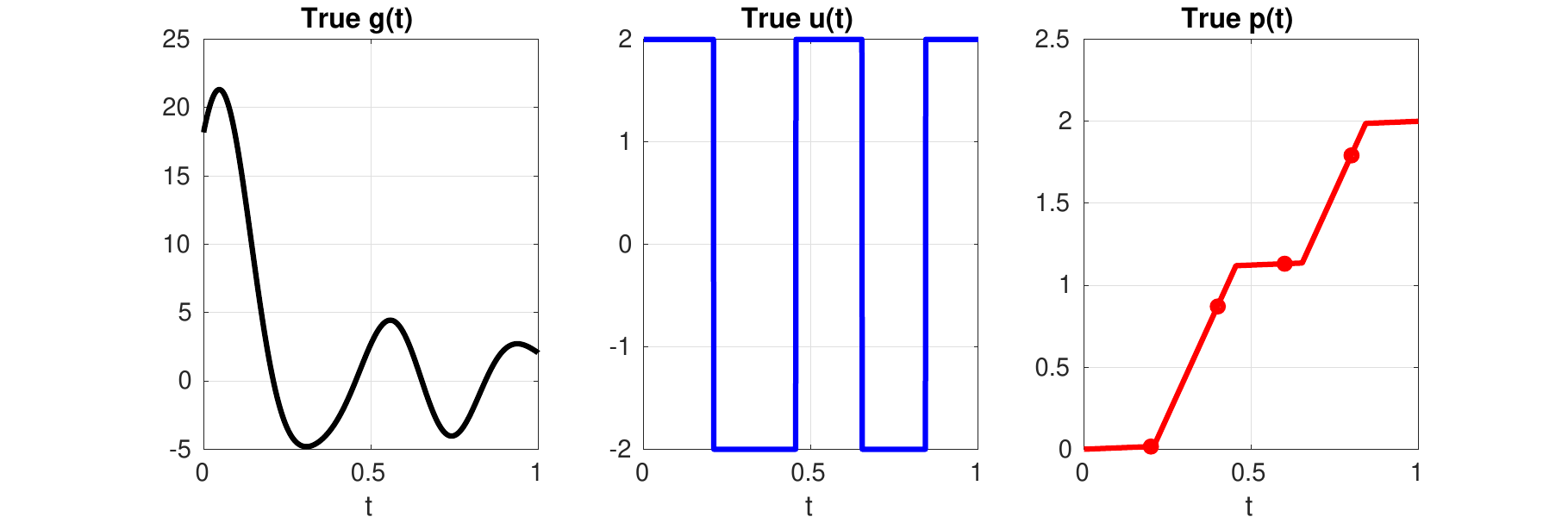} \\
	\caption{True $g^\dagger$, $u^\dagger$, $p^\dagger$, and true observations $o^\dagger = (p^\dagger(0.2),\ldots,p^\dagger(0.8))^\top$.}
	\label{fig:truth}
\end{figure}

Given $\eta \sim \Normal (0, \Sigma)$ the negative log-likelihood for observed data $y \in \Reals^4$ is then
\[
	\Phi(x)
	\defeq
	\frac{1}{2} \sum_{j=1}^4 \sigma_j^{-2} \left|y_j - F_j(x) \right|^2,
	\qquad
	x \in \Reals^d,
\]
where $F_j$ denotes the forward mapping $x=(x_i)_{i=1}^{d} \mapsto  g \mapsto u \mapsto p \mapsto p(0.2j)$.
As described in the previous subsection, $F_j(\alpha x)=F_j(x)$ for every $x \in\Reals^{d}$ and $\alpha >0$.
Thus, $\Phi$ is invariant under the radial projection map $\RadProjToSphere$, i.e.\ $\Phi\circ \RadProjToSphere=\Phi$, and we can consider Bayesian level set inversion on the sphere $\Sphere^{d-1}$ with corresponding prior
\[
	\mu_{0}
	=
	\ACG(\Lambda_d),
	\qquad
	\Lambda_d = \mathrm{diag}(\lambda_1,\ldots,\lambda_d).
\]
The posterior $\mu$ then takes the form \eqref{eq:push_post} with $\PPhi(\xx) = \Phi(\xx)$.
\Cref{fig:3D} illustrates the prior, the likelihood and the resulting posterior for dimension $d=3$.

\begin{figure}
	\includegraphics[width=0.32\textwidth]{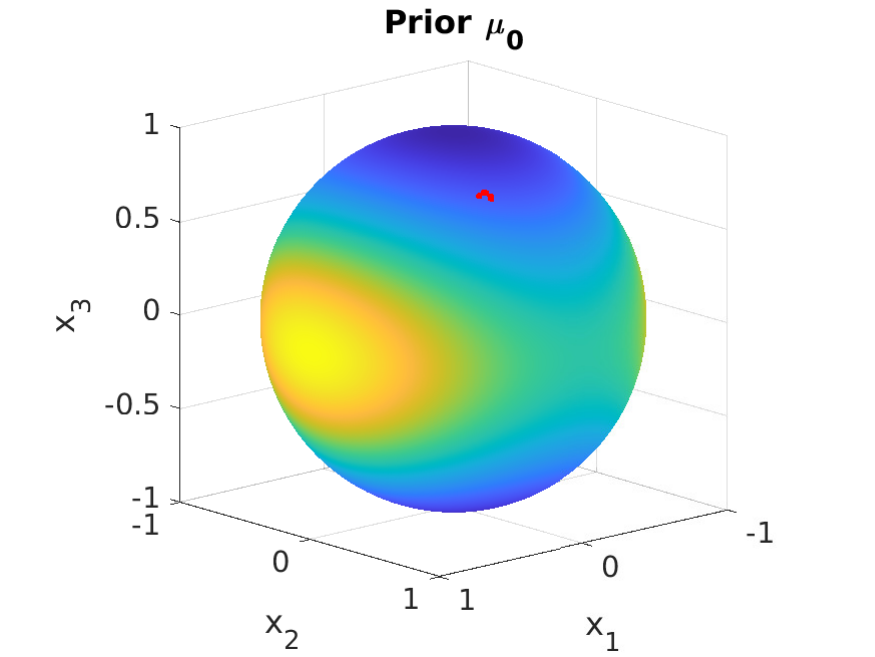}
	\includegraphics[width=0.32\textwidth]{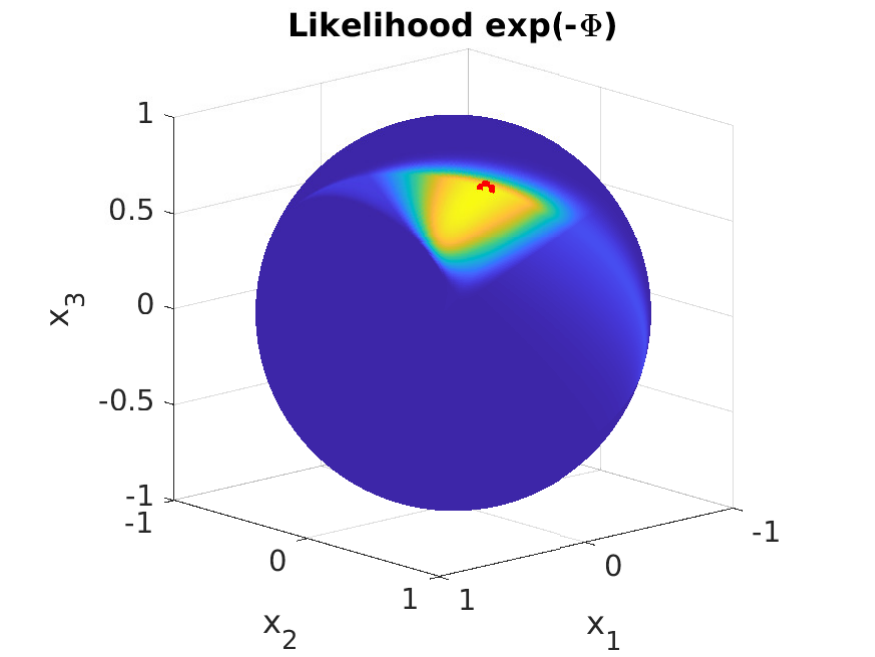}
	\includegraphics[width=0.32\textwidth]{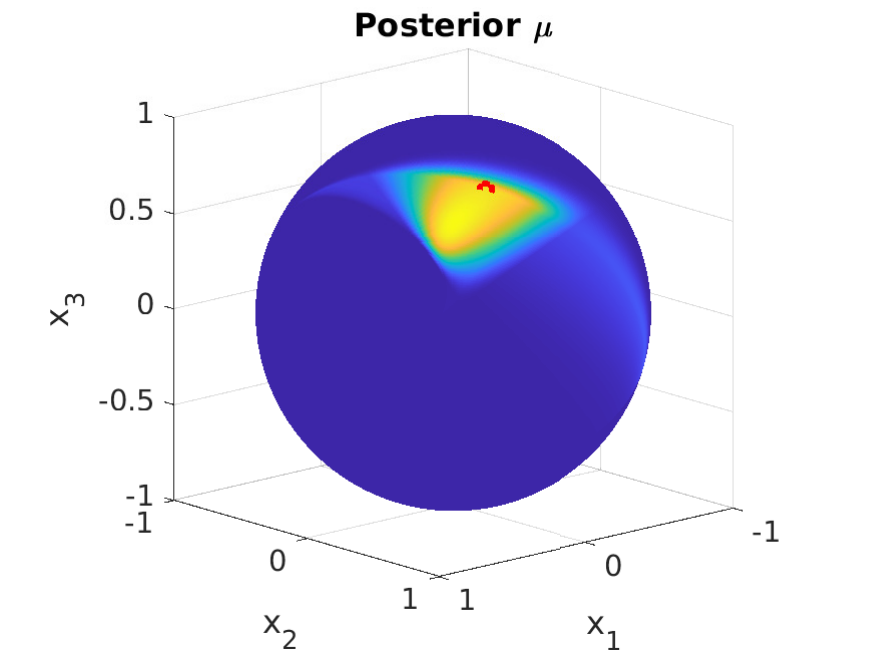}
	\caption{Prior, likelihood and posterior for dimension $d=3$; the red spot indicates the (projected) truth $\xx^\dagger$.}
	\label{fig:3D}
\end{figure}

\begin{remark}
The eigenpairs $(\lambda_j,\phi_j)$ of the covariance operator $C$ associated to $c$ in \eqref{eq:cov_func_exam} are computed numerically via a discretisation of $D$ and $C$ using a grid size of length $\delta t = 10^{-3}$.
The elliptic problem \eqref{eq:PDE} is solved numerically using the same discretisation.
Note that the solution $p$ of \eqref{eq:PDE} on $D=[0,1]$ is given by $\Upsilon(u)(t) = p(t) = 2 S_t(\e^{-u})/S_1(\e^{-u})$ with $S_t(f) = \int_0^t f(s) \rd s$.
We evaluate $S_t$ using the trapezoidal rule on the given grid.
\end{remark}

\paragraph{\ac{MCMC} on the sphere.}
We now apply the four \ac{MCMC} algorithms described in \Cref{sec:MCMC_on_Hilbert_sphere} in order to sample approximately from the posterior $\mu$ in various dimensions $d$.
In particular, we aim to compute the posterior expectation of the following quantity of interest:
\begin{align}\label{eq:BLSI_QoI}
	f(\xx)
	& =
	\left( \int_D \exp( - u(t,\xx)) \, \rd t \right)^{-1}
\end{align}
where $u(\quark, \xx) = \mathcal{L}(g(\xx))$.
We may interpret $f(\xx)$ as the effective homogenised permeability field over the one-dimensional domain $D$; see e.g.\ \citep[Section~2]{Alexanderian2015}.

First, we show in \Cref{fig:3D_samp} the thinned realisations $\xx_{100k}$, $k=1,\ldots,100$ of the Markov chains generated by the reprojected \ac{pCN}-\ac{MH} algorithm, the geodesic random walk-\ac{MH} algorithm based on \citep{MangoubiSmith2018}, and the \ac{MH} algorithm of \citep{Zappa2018} for $d=3$, subsampled every 100 steps.
\begin{figure}
	\includegraphics[trim = 5mm 0mm 10mm 0mm, clip, width=0.32\textwidth]{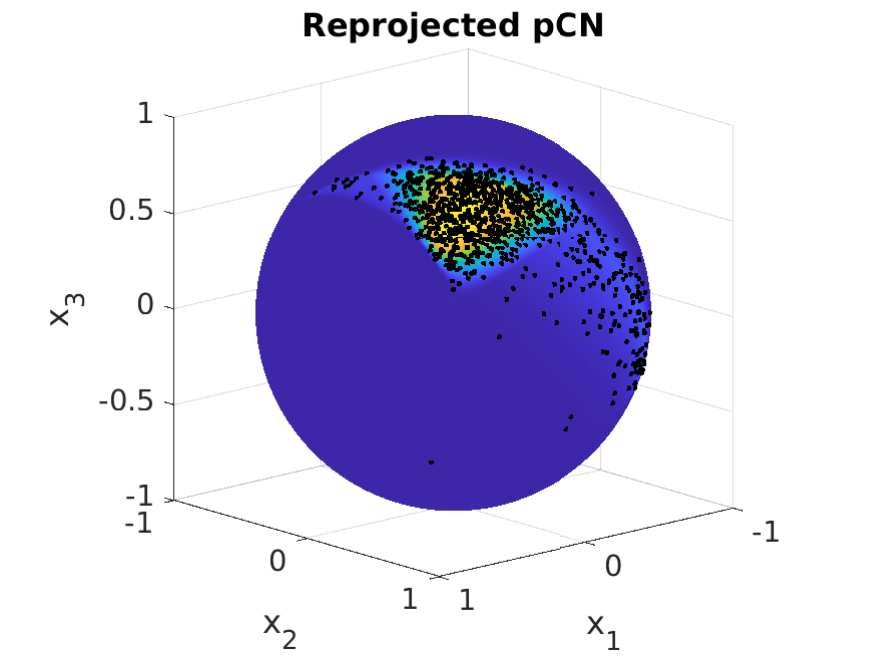}
	\includegraphics[trim = 5mm 0mm 10mm 0mm, clip, width=0.32\textwidth]{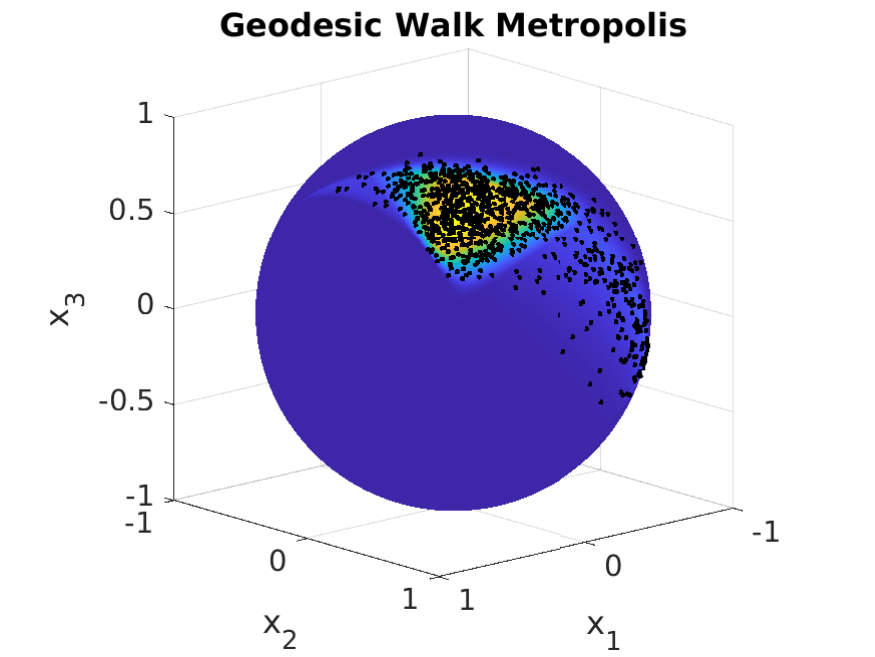}
	\includegraphics[trim = 5mm 0mm 10mm 0mm, clip, width=0.32\textwidth]{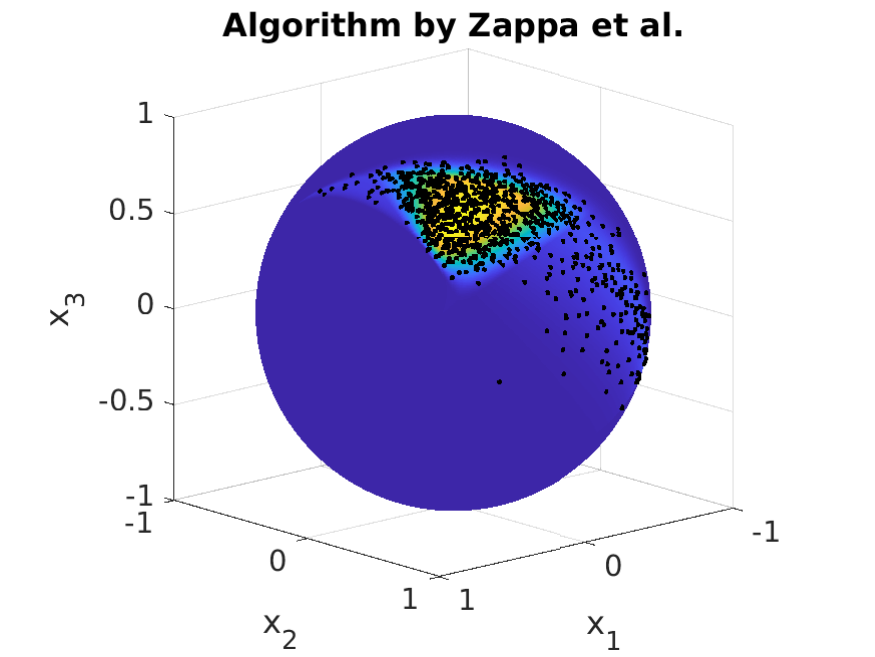}
    \caption{Thinned realisations $\xx_{100k}$, $k=1,\ldots,100$ of the Markov chains generated by \Cref{alg:pCN_Sphere,alg:geodesic_MCMC,alg:Zappa_MCMC} (left, centre, and right, respectively) for $d=3$, subsampled every 100 steps.}
	\label{fig:3D_samp}
\end{figure}

All three \ac{MH} algorithms were tuned to an average acceptance rate of roughly $23\%$.
We ran the algorithms for another $10^6$ iterations after a burn-in period of $5\cdot 10^4$ iterations.
All three runs yielded similar estimates for the posterior expectation of $f$.
We provide the corresponding estimate plus/minus the half-length of an $95\%$ confidence interval based on asymptotic variance estimates via the empirical autocorrelation functions of $(f(\xx_k))_{k}$:
\begin{align*}
\text{reprojected \ac{pCN}-\ac{MH}:} & \quad 0.420 \pm 1.299 \cdot 10^{-3}\\
\text{geodesic random walk-\ac{MH}:} & \quad 0.419 \pm 1.271 \cdot 10^{-3}\\
\text{MH by Zappa et al.:} & \quad 0.420 \pm 1.478 \cdot 10^{-3}.
\end{align*}
All three estimates exhibit similar accuracies.
Recall the root mean squared jump distance \wrt the Riemannian metric on the sphere given by
\begin{equation}\label{eq:RMSJD}
	\mathrm{RMSJD} \defeq  \sqrt{\frac 1{n-1} \sum_{k=1}^{n-1} d^2_{\Sphere}(\xx_k, \xx_{k+1}) }.
\end{equation}
In dimension $d=3$, the three \ac{MH} algorithms yielded similar estimates for the root mean squared jump distance:
\begin{align*}
\text{reprojected \ac{pCN}-\ac{MH}:} & \quad 0.202\\
\text{geodesic random walk-\ac{MH}:} & \quad 0.234\\
\text{\ac{MH} by Zappa et al.:} & \quad 0.185.
\end{align*}

Next, we tested all four algorithms, including now the reprojected \ac{ESS} algorithm, for increasing dimensions $d=10,20,40,80,160,320,640$.
In particular, we display the following quantities in \Cref{fig:comp1}:
\begin{enumerate}[label=(\roman*)]
\item
the estimated posterior expectation of $f$ given by the arithmetic mean of $(f(\xx_k))_{k}$;

\item
the estimated integrated autocorrelation time of the (approximately stationary) time series $(f(\xx_k))_{k}$ as a measure for the \ac{MCMC} error for computing the posterior expectation of $f$;
 \item the root mean squared jump distance as a measure of how well the Markov chain explores the sphere.
\end{enumerate}
An important observation from \Cref{fig:comp1} is that the two reprojected \ac{MCMC} methods show dimension-independent efficiency in terms of integrated autocorrelation time and root mean squared jump distance.
In contrast, the two \ac{MH} algorithms relying on the surface measure as reference measure --- namely, the geodesic random walk-\ac{MH} algorithm based on \citep{MangoubiSmith2018} and the \ac{MH} algorithm of \citep{Zappa2018} --- show a clear decrease in efficiency as the dimension $d$ of the state space increases.
In particular, these methods lead to less accurate estimates of the posterior mean; see the left plot in \Cref{fig:comp1}.
While it seems that the \ac{ESS} algorithm yields a higher efficiency, the higher efficiency comes at an increased cost: on average, the \ac{ESS} algorithm required $\approx 3.8$ tries until it hit the level set.
Thus, the computational cost of the \ac{ESS} algorithm was roughly four times higher than the computational cost of the \ac{pCN} algorithm.

\begin{figure}
	\includegraphics[width=0.32\textwidth]{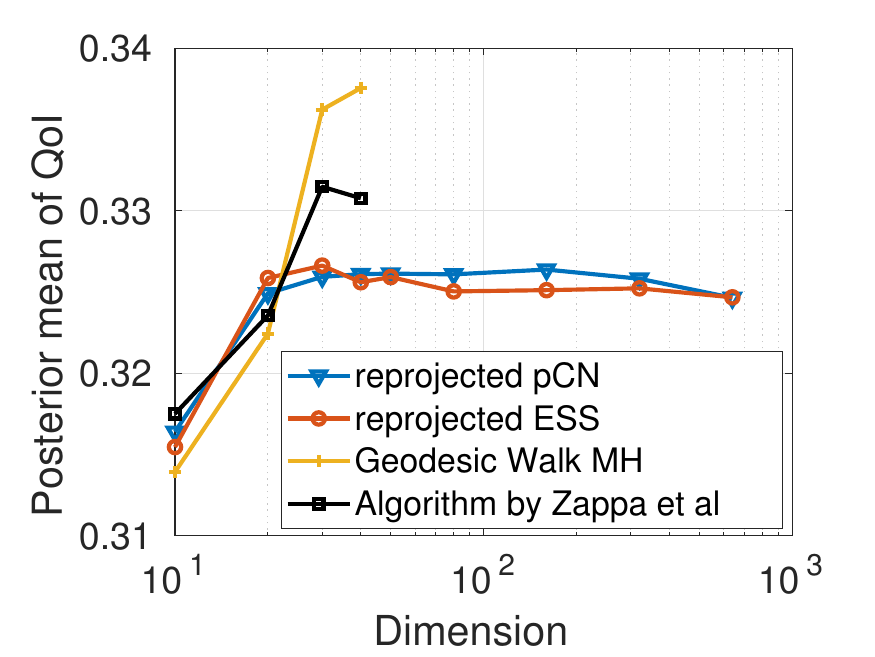}
	\hfill
	\includegraphics[width=0.32\textwidth]{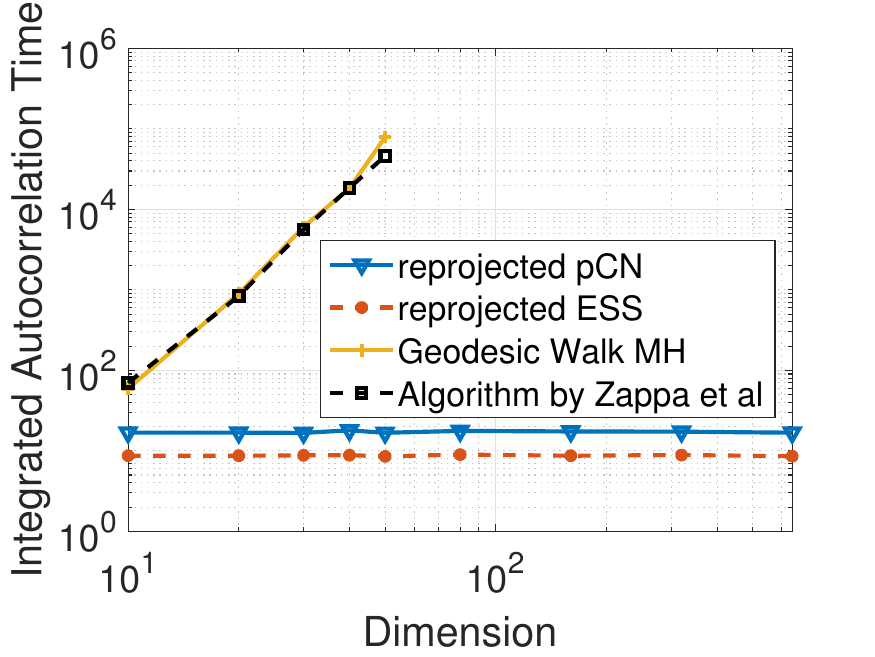}
	\hfill
	\includegraphics[width=0.32\textwidth]{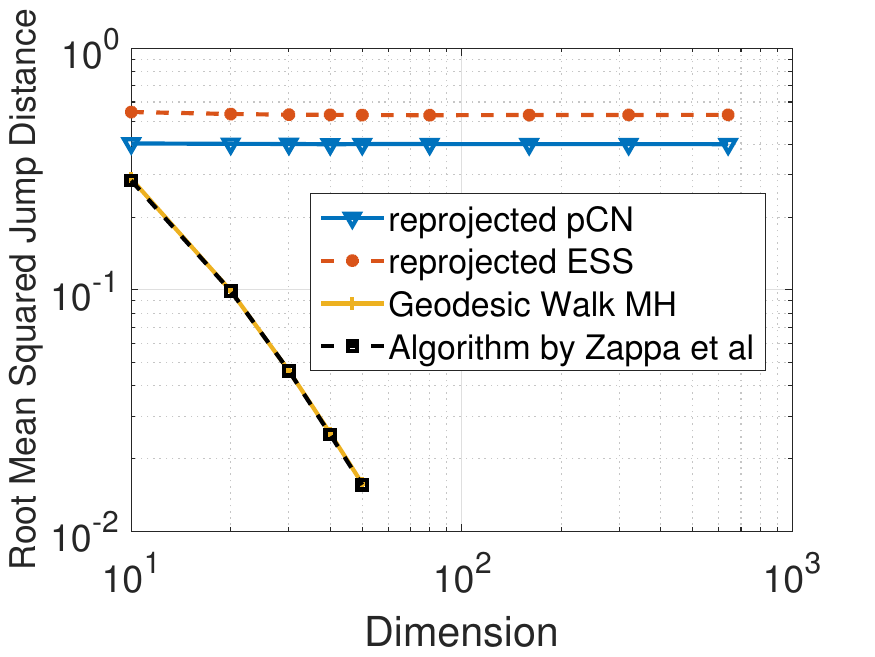}
	\caption{Estimated posterior mean (left), integrated autocorrelation time (middle) for $f$ as in \eqref{eq:BLSI_QoI} and root mean squared jump distance (right) for \Cref{alg:pCN_Sphere,alg:ESS_Sphere,alg:geodesic_MCMC,alg:Zappa_MCMC} as applied to the Bayesian level set inverse problem.}
	\label{fig:comp1}
\end{figure}

\subsection{Bayesian density estimation}
\label{ssec:numerical_density_estimation}
We now consider a second application of Bayesian inference on high-dimensional spheres.
We follow the approach of \citet{Holbrook2020} to nonparametric Bayesian density estimation: Given data $y_1,\ldots,y_n \in D$, for $D$ a bounded smooth domain in $\Reals^k$, we infer the Lebesgue probability density function $p$ of the data, where $p$ belongs to
\[
	\mathrm{P} \defeq \Set{ p\colon D \to [0,\infty) }{ \int_D p(y) \, \rd y =1 }.
\]
The set $\mathrm{P}$ is the unit simplex in the Banach space $L^1(D)$.
Instead of inferring $p$ directly, we instead infer the \emph{square root $g = \sqrt{p}$} of $p$, where $g$ belongs to
\[
	\mathrm{Q} \defeq \Set{ g\colon D \to \Reals}{ \int_D g^2(y) \, \rd y =1 }.
\]
The set $\mathrm{Q}$ coincides with the unit sphere of the function space $L^2(D)$.

\paragraph{Data.}
We use the British coal mine disaster data set that was studied in \citet{Holbrook2020}.
This data set consists of the dates of $191$ disasters recorded between March 1851 and March 1962 which can be found in \citet[Data set 204]{HandEtAl1994}.
We aim to estimate the underlying probability density between the years 1850 and 1965.
However, for computations and simplicity, we scale the data to lie within $D=[0,1]$ by a suitable affine transformation.
Thus, we would like to infer a square root density $g \in \Sphere=\mathrm{Q}$, where $\Sphere$ denotes the unit sphere of $L^2([0,1])$.

\paragraph{Prior and posterior.}
For numerical discretisation and constructing a prior model for $g$, we expand $g$ with respect to a suitable orthonormal system $\set{\phi_i}{ i \in \Naturals}$ in $L^2(D)$, $D = [0,1]$.
Here, we choose the same system as in \citep{Holbrook2020} which is based on a mean-zero Gaussian process model for $g$ with a Whittle--Mat\'ern covariance (cf.\ \eqref{eq:cov_func_exam}):
\begin{equation}
	\label{eq:KLE_BDE}
	g
	=
	g(\bar X)
	=
	\sum_{i=1}^\infty \bar X_i\, \phi_i,
	\qquad
	\phi_1 \equiv 1, \;\;
	\phi_i(y) = \sqrt{2} \cos(\pi\,(i-1)\,y),\; i > 1,
\end{equation}
where $\bar X = (\bar X_i)_{i\in\Naturals}$ are suitable random coefficients almost surely belonging to the unit sphere in $\ell^2$, such that the resulting $g = g(\bar X)$ belongs to the unit sphere in $L^2([0,1])$.
Here, we assume as prior for $\bar X$ that $\bar X \sim \ACG(\Lambda)$ where $\Lambda = \mathrm{diag}(\lambda_i\colon i \in \Naturals)$ with $\lambda_i = \sigma^2 (\kappa + \pi^2 (i-1)^2)^{-r}$, $i\geq1$.
We choose $\sigma = 0.5$, $\kappa = 0.1$ and $r = 1$ for our experiments.
We chose these parameter values because of their similarity to the values used by \citet{Holbrook2020}.
The resulting prior for $g$ is then $\ACG(C)$ where $C\colon L^2(D)\to L^2(D)$ denotes the covariance operator $C v(y) = \int_D c(y,y') v(y') \, \rd y'$ using the corresponding covariance function
\[
	c(y,y')
	=
	\sum_{i=0}^\infty \lambda_i\, \phi_i(y) \, \phi_i(y').
\]
Given $g \in \mathrm{Q}$, the likelihood $L(y; g)$ for observing the data  $y = (y_1,\ldots,y_n) \in D^n$ is
\[
	L(y; g)
	=
	\prod_{i=1}^n g^2(y_i)
	=
	\exp\left(\sum_{j=1}^n \log(g^2(y_j) ) \right)
	=
	\exp\left(2 \sum_{j=1}^n \log(\absval{g(y_j)}) \right).
\]
Thus, using the series representation in \eqref{eq:KLE_BDE}, we obtain as likelihood for $y$ given coefficients $\xx $ in the unit sphere of $\ell^2$,
\[
	L(y; \xx)
	=
	\exp\left(- \PPhi(\xx)\right),
	\qquad
	\PPhi(\xx)
	\defeq
	- 2  \sum_{j=1}^n \log\left( \Absval{ \sum_{i=0}^\infty \xx_i \, \phi_i(y_j) } \right).
\]
Note that $\PPhi\colon \ell^2 \to \Reals$ is bounded.
Given the data $y$, the resulting posterior for the coefficients $\widebar{X}$ follows the form \eqref{eq:push_post} with $\mu_0 = \ACG(\Lambda)$, and the assumptions of \Cref{theo:Uniform_ergodic} are satisfied.

A quantity of interest for this problem is the posterior expectation for the probability mass of $p$ between $0.435$ and $0.574$.
This quantity of interest is the probability of a coal mine disaster between the years 1900 and 1916.
It can be written as
\begin{align}\label{eq:BDE_QoI}
	f(\xx)
	& \defeq
	\int_{0.435}^{0.574} g^2(y, \xx) \, \rd y
	=
	\left( \sum_{i,k=1}^\infty w_{i,k} \xx_i \, \xx_k \right)^2
\end{align}
where $w_{i,k} \defeq \int_{0.435}^{0.574} \phi_i(y)\, \phi_k(y)\, \rd y$, i.e., $f$ is a quadratic function of $\xx$.

\paragraph{\ac{MCMC} simulations.}
We truncated the expansion in \eqref{eq:KLE_BDE} after $d$ terms for $d = 10$, $20$, $30$, $40$, $50$, $100$, $200$, $400$, and $800$ and sampled approximately from the resulting truncated posterior $\mu^{(d)}$ on $\Sphere^{d-1}$ for the coefficients $\xx^{(d)} = (\xx_1,\ldots,\xx_d)$ using the prior $\mu^{(d)}_0 = \ACG(\Lambda_d)$ with $\Lambda_d \defeq \mathrm{diag}(\lambda_1, \ldots, \lambda_d)$.
To this end, we applied the four \ac{MCMC} algorithms described in \Cref{sec:MCMC_on_Hilbert_sphere}, and used them to compute the expectation of the quantity $f$ in \eqref{eq:BDE_QoI} with respect to the truncated posterior $\mu^{(d)}$.
After a burn-in period of $10^5$ iterations, we ran the algorithms for $10^6$ iterations and compared their efficiency.
We quantified their efficiency in terms of the estimated integrated autocorrelation time for $f$ and the root mean squared jump distance.
We display the results in \Cref{fig:comp2}.
As in \Cref{fig:comp1}, the results exhibit dimension-independent efficiency of the two reprojected \ac{MCMC} methods, whereas the geodesic random walk-\ac{MH} algorithm
and the \ac{MH} algorithm of \citet{Zappa2018} show a clear and drastic deterioration of efficiency as $d$ increases.
\begin{figure}
	\includegraphics[width=0.32\textwidth]{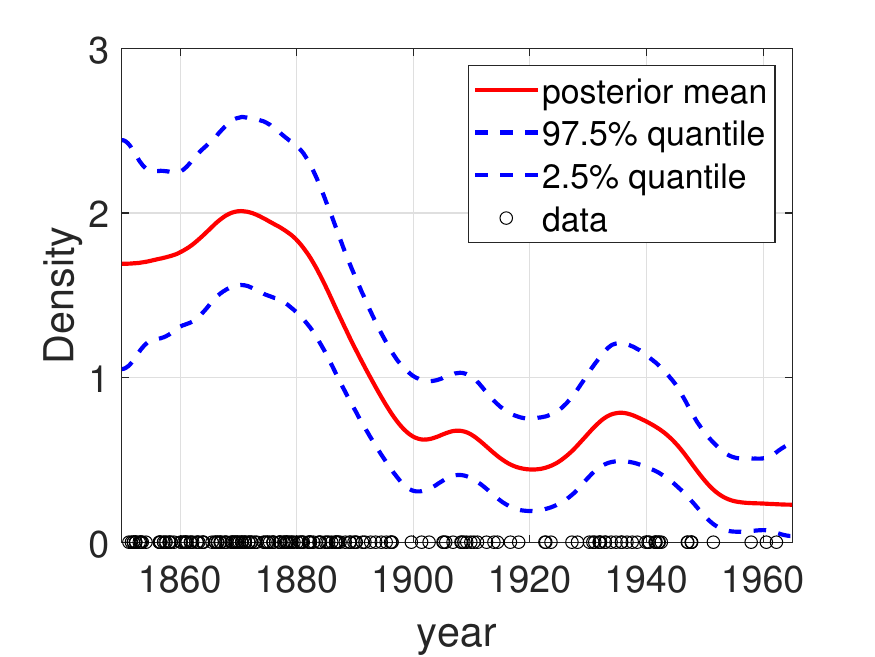}
	\hfill
	\includegraphics[width=0.32\textwidth]{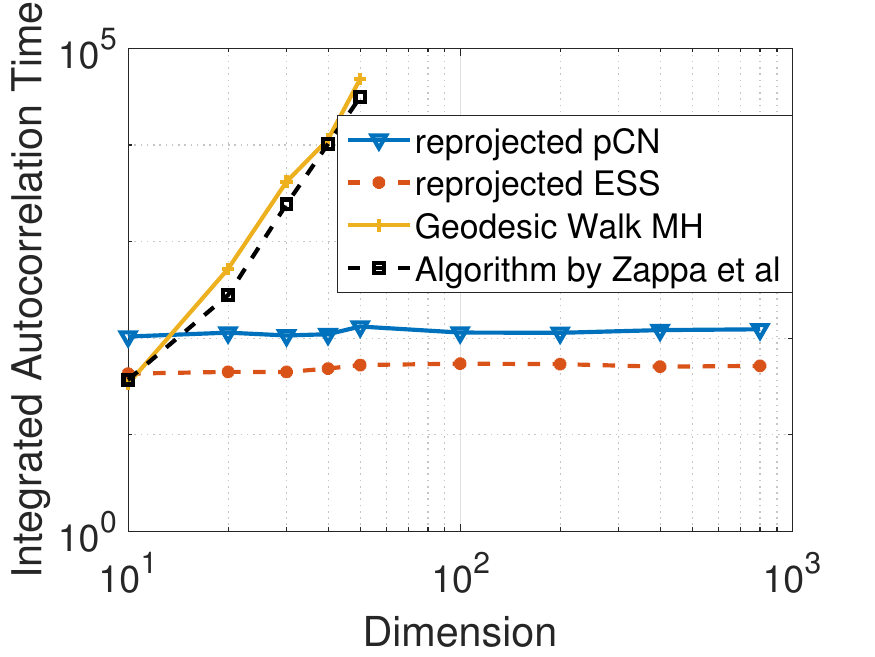}
	\hfill
	\includegraphics[width=0.32\textwidth]{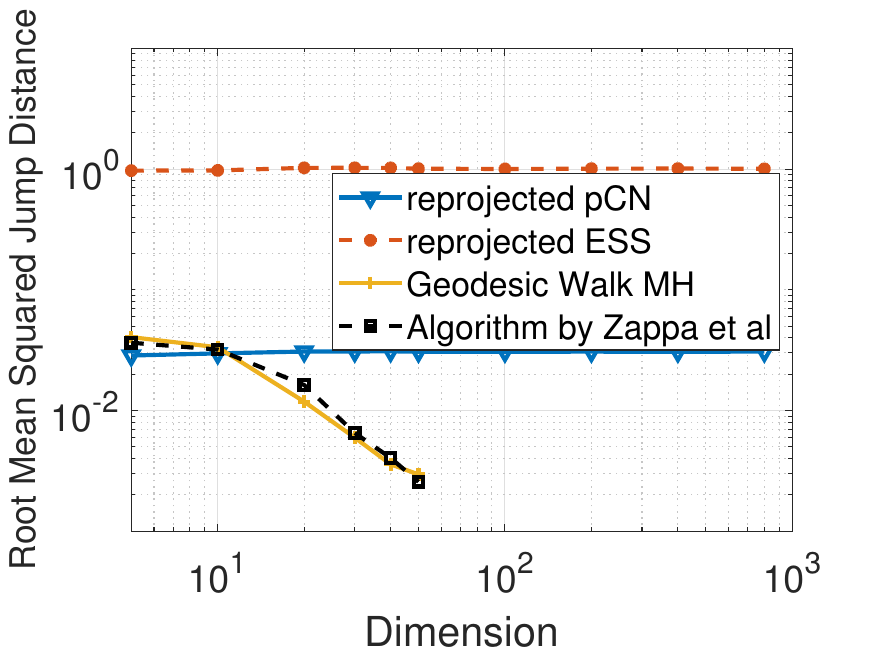}
	\caption{Estimated posterior mean and pointwise posterior quantiles for the density $p$ based on truncation of \eqref{eq:KLE_BDE} to the first $d=30$ summands (left), estimated integrated autocorrelation time for quantity of interest $f$ as in \eqref{eq:BDE_QoI} (middle), and root mean squared jump distance (right) for \Cref{alg:pCN_Sphere,alg:ESS_Sphere,alg:geodesic_MCMC,alg:Zappa_MCMC} as applied to the Bayesian density estimation problem.}
	\label{fig:comp2}
\end{figure}

\section{Closing remarks}
\label{sec:closing}

In this paper, we proposed efficient \ac{MCMC} algorithms for sampling target measures on high-dimensional spheres that are absolutely continuous \wrt an ACG prior.
Our algorithms exploit the structure of the \ac{ACG} prior by lifting the sampling problem on the sphere to a sampling problem in the ambient Hilbert space.
This allows us to apply existing \ac{MCMC} algorithms on linear spaces --- such as the \ac{pCN}-\ac{MH} algorithm --- for which there are theoretical results concerning dimension-independent efficiency.

Using the technique of push-forward Markov kernels, we then obtained transition kernels on the sphere that inherit many properties of the transition kernels in the ambient Hilbert space, e.g.\ reversibility \wrt the desired target measure.
Under fairly mild conditions, we showed the uniform ergodicity of Markov chains generated by our algorithms, and provided theoretical arguments for the dimension independence of their spectral gaps.

Using binary classification and Bayesian density estimation as test problems, we compared the performance of our methods to that of \ac{MH} algorithms based on the geodesic random walk proposal of \citet{MangoubiSmith2018} and the approach of \citet{Zappa2018}.
Our results illustrated the robustness of our algorithms as the dimension of the state space increased.
In comparison, the statistical efficiency of the two other existing algorithms decreased as the dimension of the state space increased.

Based on our work, several interesting questions for future research remain.
A theoretical analysis of the dimension independence of the (reprojected) \ac{ESS} transition kernel remains an open issue.
Here Markov chain comparison techniques --- as have been developed by, for example, \citet{Peskun1973}, \citet{Andrieu2016}, and \citet{RuSpru2018} --- may be useful for establishing the inheritance of a spectral gap from the \ac{pCN}-\ac{MH} transition kernel to the \ac{ESS} transition kernel.
Additionally, it seems promising to modify our algorithms so that they can be applied to sample from target measures \wrt other common priors such as Bingham distributions, by using the acceptance-rejection approach of \citet{Kent2018}, for example.

\section*{Acknowledgements}
\addcontentsline{toc}{section}{Acknowledgements}

The research of HCL has been partially funded by the \ac{DFG} --- Project-ID \href{https://gepris.dfg.de/gepris/projekt/318763901}{318763901} --- SFB1294.
BS has been supported by the \ac{DFG} project \href{https://gepris.dfg.de/gepris/projekt/389483880}{389483880}.
DR gratefully acknowledges partial support of the \href{http://fbms.math.uni-goettingen.de/index.php?id=15}{Felix Bernstein Institute for Mathematical Statistics in the Biosciences} and the \ac{DFG} within project \href{https://gepris.dfg.de/gepris/projekt/456842089}{432680300} --- SFB
1456.
TJS has been partially supported by the Freie Universit\"at Berlin and the Zuse Institute Berlin within the Excellence Initiative of the \ac{DFG}, and by \ac{DFG} projects \href{https://gepris.dfg.de/gepris/projekt/390685689}{390685689} and \href{https://gepris.dfg.de/gepris/projekt/415980428}{415980428}.
The authors wish to thank Stephan Huckemann and Michael Habeck for helpful comments and, in particular, Andre Wibisono for the inspiring discussion which led to the proof of \Cref{theo:main}.

\bibliographystyle{abbrvnat}
\addcontentsline{toc}{section}{References}
\bibliography{acg-sphere-mcmc}

\appendix

\section{Gaussian measures and angular central Gaussian measures}\label{sec:Gaussian_ACG}
\subsection{Gaussian measures on Hilbert spaces}
\label{ssec:Gaussian}

We first briefly recall some basic notions related to measures, especially Gaussian measures, on Hilbert spaces.
A standard reference in this area is the book of \citet{Bogachev1998}.

In the following, we consider a separable Hilbert space $\HH$ with norm $\norm{ \quark }$ and inner product $\innerprod{ \quark }{ \quark }$.
Suppose $\mu \in \probson{\HH}$ with finite second moment $\int_{\HH} \norm{ x }^{2} \, \mu (\rd x)$ is given.
Then the \emph{mean element} $a \in \HH$ and \emph{covariance operator} $C \colon \HH \to \HH$ of $\mu$ are determined by
\begin{align}
	0 & = \int_{\XX} \innerprod{ v }{ x - a } \, \mu (\rd x) & & \text{for all $v \in \HH$,} \\
	\innerprod{ C u }{ v } & = \int_{\XX} \innerprod{ u }{ x - a } \innerprod{ v }{ x - a } \, \mu (\rd x) & & \text{for all $u, v \in \HH$.}
\end{align}
When $a = 0$, the measure $\mu$ is said to be \emph{centred}.

In this paper, we focus on \emph{Gaussian measures} $\mu = \Normal (a, C)$ on $\HH$.
Such measures are equivalently determined by the property that every one-dimensional linear image is Gaussian on $\Reals$, i.e.\ identifying $\ell \in \HH$ with the continuous linear functional $\HH \to \Reals$, $x \mapsto \innerprod{\ell}{x}$,
\[
	\mu = \Normal (a, C) \in \probson{\HH} \iff \forall \ell \in \HH,\; \ell_{\sharp} \mu = \Normal ( \innerprod{\ell}{a} , \innerprod{C\ell}{\ell} ) \in \probson{\Reals} ;
\]
or by the form of the characteristic function, i.e.
\[
	\mu = \Normal (a, C) \in \probson{\HH} \iff \forall v \in \HH,\; \int_{\XX} \exp (i \innerprod{ v }{ x } ) \, \mu( \rd x ) = \exp \bigl( i \innerprod{ v }{ a } - \tfrac{1}{2} \innerprod{ C v }{ v } \bigr) .
\]

The covariance operator $C$ of a probability measure on $\HH$ is always self-adjoint and positive semi-definite, and so its eigenvalues are all real and non-negative.
Furthermore, the assumption of finite second moment implies that these eigenvalues are summable, i.e.\ $C$ is a trace-class operator.

Given any self-adjoint and positive-definite operator $C \colon \HH \to \HH$, we define for $x, y \in \rg(C^{1/2})$ the weighted inner product and norm
\begin{align}
	\label{eq:precision_inner_product}
	\innerprod{ x }{ y }_{C} & \defeq \innerprod{ C^{-1/2} x }{ C^{-1/2} y } , \\
	\label{eq:precision_norm}
	\norm{ x }_{C} & \defeq \sqrt{\innerprod{ x }{ x }_{C}} ,
\end{align}
If $C$ is the covariance operator of a Gaussian measure $\mu = \Normal(a, C)$, then $\rg(C^{1/2})$ is the \emph{Cameron--Martin space} of $\mu$, and \eqref{eq:precision_inner_product} and \eqref{eq:precision_norm} are often called the \emph{precision} inner product and norm respectively.

The topological support of $\Normal(0, C)$ --- i.e.\ the smallest closed set of full measure --- is the closure in $\HH$ of $\rg(C)$ \citep[Theorem~3.6.1]{Bogachev1998}.
A sufficient condition for the density of $\rg(C)$ in $\HH$ is the strict positivity of $C$, i.e.\ that it has no null eigenvalue.
The containments $\rg(C) \subseteq \rg(C^{1/2}) \subseteq \HH$ always hold, and are strict when the Cameron--Martin space $\rg(C^{1/2})$ has infinite dimension.

\subsection{Angular central Gaussian measures on spheres in Hilbert spaces}
\label{ssec:ACG}

We denote the unit sphere in a separable Hilbert space $\HH$ by
\[
	\Sphere \defeq \set{ x \in \HH }{ \norm{ x } = 1 }.
\]
If $\HH = \Reals^{d}$, then we emphasise the dimension and denote the unit sphere by $\Sphere^{d - 1}$.
The sphere $\Sphere$ is equipped with the following metric $d_{\Sphere} \colon \Sphere\times\Sphere \to [0,\pi]$:
\begin{equation}
	\label{eq:dS}
	d_{\Sphere}(\xx,\yy)
	\defeq
	\arccos \left( \innerprod{ \xx }{ \yy } \right)
	=
	2\arcsin\left(\frac{1}{2}\norm{\xx-\yy}\right)
	\qquad
	\xx,\yy \in \Sphere,
\end{equation}
where the second identity follows from elementary trigonometry.
The definition of $d_{\Sphere}$ by the arcsine function also extends to the unit sphere $\Sphere$ in general Banach spaces.
Moreover, it yields the Lipschitz equivalence --- and hence the topological equivalence --- of $d_{\Sphere}$ and the metric on $\Sphere$ induced by the norm $\norm{ \quark }$ of the ambient space $\HH$:
\begin{equation}
	\label{eq:dS_equiv}
	\norm{ \xx - \yy }
	\leq
	d_{\Sphere}(\xx,\yy)
	\leq \frac \pi2 \norm{ \xx - \yy }
	\qquad
	\forall \xx,\yy \in \Sphere,
\end{equation}
since $g(r) = 2 \arcsin\left(\frac{1}{2} r\right) / r$ varies between $1$ and $\pi/2$ for $r\in[0,2]$.
Thus, the metric $d_{\Sphere}$ is topologically equivalent to the norm of the ambient space on $\Sphere$.
We record this result in \Cref{lem:dS_topology_equiv_rel_norm_topology} below.
According to \citet[Proposition~3.3(ii)]{Kechris1995}, a closed subset of a Polish space is always itself Polish in the relative (subspace) topology.
Thus, $(\Sphere, d_{\Sphere})$ is a Polish space and
$\Borel{ \Sphere } = \set{ B \cap \Sphere }{ B \in \Borel{\HH} }$.

\begin{lemma}
	\label{lem:dS_topology_equiv_rel_norm_topology}
	The topology on $\Sphere$ generated by $d_{\Sphere}$ coincides with the relative topology on $\Sphere$ generated by the norm $\norm{\quark}$ of $\HH$.
\end{lemma}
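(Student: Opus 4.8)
The plan is to read the lemma off directly from the two-sided estimate \eqref{eq:dS_equiv}, which has already been recorded in the text: for all $\xx,\yy\in\Sphere$,
\[
	\norm{\xx-\yy} \le d_{\Sphere}(\xx,\yy) \le \tfrac{\pi}{2}\norm{\xx-\yy}.
\]
This says precisely that $d_{\Sphere}$ and the restriction to $\Sphere$ of the metric induced by the ambient norm $\norm{\quark}$ are bi-Lipschitz equivalent, and bi-Lipschitz equivalent metrics on a set always generate the same topology. So the proof amounts to making this standard observation explicit in terms of open balls.

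Concretely, for $\xx\in\Sphere$ and $r>0$ let $B^{d_{\Sphere}}_r(\xx)$ and $B^{\norm{\quark}}_r(\xx)$ denote the open balls of radius $r$ about $\xx$ in the two metrics. The left-hand inequality in \eqref{eq:dS_equiv} gives the inclusion $B^{d_{\Sphere}}_r(\xx)\subseteq B^{\norm{\quark}}_r(\xx)$, while the right-hand inequality gives $B^{\norm{\quark}}_{2r/\pi}(\xx)\subseteq B^{d_{\Sphere}}_r(\xx)$. Hence each family of balls refines the other, so a subset of $\Sphere$ is open in the $d_{\Sphere}$-topology if and only if it is open in the relative norm topology, which is exactly the claim. (Equivalently, one may simply note that the identity map between $(\Sphere,d_{\Sphere})$ and $\Sphere$ with the relative norm metric is Lipschitz in both directions, hence a homeomorphism.)

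There is no genuine obstacle here: the only nontrivial input is \eqref{eq:dS_equiv} itself, whose justification is elementary — it rests on the identity $\arccos\innerprod{\xx}{\yy} = 2\arcsin(\tfrac12\norm{\xx-\yy})$ for unit vectors $\xx,\yy$, together with the fact that $g(r) = 2\arcsin(r/2)/r$ takes values in $[1,\tfrac{\pi}{2}]$ for $r\in(0,2]$, both of which are already observed immediately before the lemma statement. So in the write-up I would state \eqref{eq:dS_equiv}, deduce the mutual containments of balls, and conclude.
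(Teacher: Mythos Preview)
Your proposal is correct and follows exactly the same approach as the paper: the paper's proof is the single sentence ``This is a consequence of the Lipschitz equivalence of the metrics \eqref{eq:dS_equiv}.'' You have simply spelled out the standard ball-containment argument that underlies this observation.
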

\begin{proof}
	This is a consequence of the Lipschitz equivalence of the metrics \eqref{eq:dS_equiv}.
\end{proof}

Fix an arbitrary $\bar{z}\in\Sphere$.
Recall the radial projection to the sphere defined in \eqref{eq:radial_projection_map_T},
\begin{equation*}
	\RadProjToSphere\colon \HH \to\Sphere,\quad \RadProjToSphere(x) \defeq \begin{cases}
		\frac{x}{\norm{ x }}, & \text{if $x \neq 0$,}
		\\
		\bar{z}, & \text{if $x = 0$.}
	\end{cases}
\end{equation*}
The choice of $\bar{z}$ is not important in what follows, but we fix $\bar{z}$ in order to ensure that $\RadProjToSphere$ is a measurable mapping from $\HH$ into $\Sphere$.

Recall that the \ac{ACG} measure on $\Sphere$ associated to a centred Gaussian measure $\Normal(0, C)$ on $\HH$ is given by
\[
	\mu \defeq \RadProjToSphere_{\sharp} \Normal (0, C)
\]
and denoted by $\mu = \ACG (C)$.
In other words, under $\ACG (C)$, each $E \in\Borel{\Sphere}$ is assigned the Gaussian $\Normal(0, C)$-measure of the cone $\set{ \alpha u \in \XX }{ \alpha > 0, u \in E }$.
Note that $\ACG(C) = \ACG(\lambda C)$ for $\lambda \neq 0$ and so it can sometimes be useful to fix a normalisation for $C$, e.g.\ by taking its leading eigenvalue to be unity.

\begin{corollary}
	\label{cor:openness_of_cones}
	Let $C_{E} \defeq \set{ \alpha e }{ \alpha > 0, e \in E } \subseteq \HH$ denote the cone spanned by a subset $E \subseteq \Sphere$.
	The set $E$ is open in $(\Sphere, d_{\Sphere})$ if and only if $C_{E}$ is open in $(\HH, \norm{\quark})$.
\end{corollary}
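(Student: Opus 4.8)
The plan is to reduce the statement to two elementary observations and then invoke \Cref{lem:dS_topology_equiv_rel_norm_topology}. The first observation is that the radial projection $\RadProjToSphere$, restricted to $\HH \setminus \{0\}$, is continuous as a map into $(\Sphere, d_{\Sphere})$: indeed $x \mapsto x / \norm{x}$ is continuous into $(\HH, \norm{\quark})$ on $\HH \setminus \{0\}$, being a composition of continuous maps, and by \Cref{lem:dS_topology_equiv_rel_norm_topology} the $d_{\Sphere}$-topology on $\Sphere$ coincides with the relative norm topology, so the same map is continuous into $(\Sphere, d_{\Sphere})$. The second observation is that $C_{E}$ and $E$ determine each other: $C_{E} = (\RadProjToSphere|_{\HH \setminus \{0\}})^{-1}(E)$ and $E = C_{E} \cap \Sphere$.

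First I would verify these two identities. The equality $C_{E} = (\RadProjToSphere|_{\HH \setminus \{0\}})^{-1}(E)$ holds since $x \in C_{E}$ means precisely that $x \neq 0$ and $x / \norm{x} \in E$. For $E = C_{E} \cap \Sphere$, if $\xx \in \Sphere \cap C_{E}$, write $\xx = \alpha e$ with $\alpha > 0$ and $e \in E \subseteq \Sphere$; taking norms gives $\alpha = 1$, so $\xx = e \in E$, and the reverse inclusion $E \subseteq C_{E} \cap \Sphere$ is immediate from $e = 1 \cdot e$.

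Granting these, both implications follow at once. If $E$ is open in $(\Sphere, d_{\Sphere})$, then $C_{E} = (\RadProjToSphere|_{\HH \setminus \{0\}})^{-1}(E)$ is open in $\HH \setminus \{0\}$ by continuity, and since $\HH \setminus \{0\}$ is open in $\HH$, it follows that $C_{E}$ is open in $\HH$. Conversely, if $C_{E}$ is open in $\HH$, then $E = C_{E} \cap \Sphere$ is open in the relative norm topology on $\Sphere$, which by \Cref{lem:dS_topology_equiv_rel_norm_topology} coincides with the $d_{\Sphere}$-topology, so $E$ is open in $(\Sphere, d_{\Sphere})$. There is essentially no obstacle here; the only points requiring care are the bookkeeping around the excised origin and the use of \Cref{lem:dS_topology_equiv_rel_norm_topology} to move between $d_{\Sphere}$ and the relative norm topology, after which the argument is just the standard facts that continuous preimages of open sets are open and that relatively open sets are traces of ambient open sets.
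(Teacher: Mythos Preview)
Your proof is correct and follows essentially the same route as the paper: both directions use the identities $C_{E} = (\RadProjToSphere|_{\HH\setminus\{0\}})^{-1}(E)$ and $E = C_{E}\cap\Sphere$, together with continuity of the restricted radial projection and \Cref{lem:dS_topology_equiv_rel_norm_topology}. Your version is a bit more explicit in checking these two identities and in noting why openness passes from $\HH\setminus\{0\}$ to $\HH$, but the argument is the same.
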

\begin{proof}
	Suppose that $C_{E}$ is open in $\HH$.
	Then $E = C_{E} \cap \Sphere$ is, by \Cref{lem:dS_topology_equiv_rel_norm_topology}, open in $(\Sphere, d_{\Sphere})$.

	Let $\RadProjToSphere\vert_{\HH\setminus\{0\}}$ be the restriction of the radial projection $\RadProjToSphere$ defined in \eqref{eq:radial_projection_map_T} to $\HH\setminus\{0\}$.
	For $E$ in $(\Sphere,d_{\Sphere})$, $ (\RadProjToSphere\vert_{\HH\setminus\{0\}})^{-1}(E) = C_E$.
	Equip $\HH\setminus\{0\}$ with the subspace topology.
	Then $\RadProjToSphere\vert_{\HH\setminus\{0\}}$ is continuous, and if $E$ is open, then $C_E$ is open in $(\HH\setminus\{0\},\norm{\quark})$, and hence in $(\HH,\norm{\quark})$.
\end{proof}
Whenever $\Normal (0, C)$ has support equal to $\HH$, \Cref{cor:openness_of_cones} immediately implies that the induced $\ACG(C)$ measure has support equal to $\Sphere$;
thus, in view of \Cref{ssec:Gaussian}, $\ACG (C)$ is a strictly positive measure on $\Sphere$ whenever $C$ is positive definite.

In the case where $\HH = \Reals^{d}$ with the usual Euclidean norm and $C\in\Reals^{d \times d}$ is symmetric and positive definite, one can show that the density $\rho \colon \Sphere^{d - 1} \to [0, \infty)$ of $\mu=\ACG (C)$ \wrt the $(d - 1)$-dimensional Hausdorff measure on the sphere is
\begin{align*}
	\rho(\xx)
	& = \int_{0}^{\infty} \frac{ \exp(- \tfrac{1}{2} (r \xx) \cdot C^{-1} (r \xx)) }{ \sqrt{ \det (2 \pi C) } } r^{d - 1} \, \rd r \\
	& = \frac{ \Gamma(d / 2) }{ 2 (\tfrac{1}{2} \norm{ \xx }_{C}^{2})^{d / 2} \sqrt{ \det (2 \pi C) } } \\
	& = \frac{ \Gamma(d / 2) }{ 2 \pi^{d / 2} \sqrt{ \det C } } \norm{ \xx }_{C}^{- d} .
\end{align*}
In the second equation, we used the fact that, for $a > 0$ and $n \in \Naturals$, $\int_{0}^{\infty} a^{- r^{2}} r^{n - 1} \, \rd r = \frac{\Gamma(n / 2)}{2 (\log a)^{n/2}}$
\citep[e.g.][Equation~(1)]{Tyler1987}.

\begin{remark}
	\label{rem:ACG_on_projective_spaces}
	The above definitions and properties can be readily adapted to the image of a centred Gaussian measure on a projective space, i.e.\ the quotient of $\HH$ by the equivalence relation $u \sim v \iff u = \lambda v$ for some $\lambda \neq 0$, i.e.\ $\Sphere$ with antipodal points identified.
	Note that, while $\ACG(C)$ on $\Sphere$ is always a multimodal distribution with at least two modes, its image on the projective space can be unimodal, which may be desirable in applications.
\end{remark}

\section{Markov chain Monte Carlo}
\label{sec:MCMC}

Markov chain methods provide a standard tool for approximate sampling of complicated distributions, such as posterior distributions appearing in the Bayesian analysis of data.
We recall here notions related to \ac{MCMC} on a general state space $\XX$ equipped with a target probability measure $\mu \in \probson{\XX}$, the Metropolis--Hastings and slice sampling paradigms, and make some particular points about \ac{MCMC} on infinite-dimensional Hilbert spaces.

\subsection{General notions}\label{sec:MCMC_general}

By a \emph{Markov kernel} on a topological space $\XX$ we mean a function $K \colon\XX \times \Borel{\XX} \to [0,1]$ such that $K (x, \quark) \in \probson{\XX}$ for each $x \in \XX$, and $K (\quark, A)$ is a measurable function for each $A \in \Borel{X}$.
A sequence of random variables $(X_n)_{n\in\mathbb{N}}$, mapping from $(\Omega,\mathcal{A},\Prob)$ to $\XX$, is a (\emph{time-homogeneous}) \emph{Markov chain} with \emph{transition kernel} $K$ on $\XX$ if
\[
	\Prob(X_{n+1}\in A
	\mid X_1,\dots,X_n) = \Prob(X_{n+1}\in A\mid X_n) = K(X_n,A), \quad
n\in\mathbb{N},\,A\in\Borel{\XX},
\]
where $K$ is a Markov kernel\footnote{In this paper, we shall distinguish between Markov kernels and transition kernels.
A transition kernel is a Markov kernel associated to a time-homogeneous Markov chain.
}.
We abuse notation and also use $K$ to denote the transition operator induced by the kernel $K$;
the transition operator acts on functions $f \colon \XX \to \Reals$ via
\begin{equation}
	\label{eq:transition_operator}
	(K f) (x) \defeq \int_{\XX} f(y) \, K(x, \rd y) = \ev{ f(X_{n + 1}) \mid X_{n} = x }\quad \text{for $x \in \XX$.}
\end{equation}

The idea of \ac{MCMC} is to construct a Markov chain $(X_n)_{n\in\mathbb{N}}$ with transition kernel $K$ such that the distribution of $X_n$ converges to $\mu$ as $n\to\infty$.
Ideally, this convergence will be ``fast'' and the correlation between successive random variables $X_{n}$ and $X_{n+1}$ will also be weak.

A necessary condition for a transition kernel $K$ to have $\mu$ as a limiting distribution is that $\mu$ is an \emph{invariant distribution} of $K$, i.e.
\begin{equation}
	\label{eq:invariant_distribution}
	\mu(A)
	=
	\mu K(A)
	\defeq
	\int_{\XX} K(x,A) \, \mu(\rd x), \quad \text{for all $A\in\Borel{\XX}$.}
\end{equation}
\emph{Reversibility} (or \emph{detailed balance}) of a transition kernel $K$ on $\XX$ \wrt $\mu$ refers to the property that
\begin{equation}
	\label{eq:detailed_balance}
	K(x,\rd y) \mu(\rd x) = K(y,\rd x) \mu(\rd y),
\end{equation}
i.e.\ that the measure $K(x,\rd y) \mu(\rd x)$ on $\XX\times\XX$ is symmetric.
If \eqref{eq:detailed_balance} holds we say that $K$ is $\mu$-reversible.
Reversibility of a transition kernel $K$ \wrt $\mu$ implies that $K$ has $\mu$ as an invariant distribution, although the converse is generally false.

If $\mu$ is an invariant distribution of $K$,
and if some non-restrictive regularity conditions such as $\varphi$-irreducibility and Harris recurrence hold, then a strong law of large numbers holds, i.e.
\[
	\lim_{n \to \infty} \frac{1}{N} \sum_{n = 1}^{N} f(X_{n}) = \int_{\XX} f(x) \, \mu (\rd x) \quad \Prob\text{-a.s.},
\]
for any $\mu$-integrable $f\colon\XX\to\Reals$ \citep{MeTw09,AsGl11}.
This shows that the ``\ac{MCMC}-time average'' $\frac{1}{N} \sum_{n = 1}^{N} f(X_{n})$ can be used to approximate the mean of $f$ \wrt the distribution of interest $\mu$.
For more quantitative statements the spectral gap of a Markov chain (or its transition kernel) is a crucial quantity.
For a transition kernel $K$ that is reversible \wrt $\mu$ (inducing the transition operator $K$ via \eqref{eq:transition_operator}) we define
\begin{equation}
	\label{eq:gap}
	\gap_{\mu}(K) \defeq 1 - \norm{ K }_{L^{2}_{0,\mu} \to L^{2}_{0,\mu}},
\end{equation}
where we recall that $L^2_{0,\mu}\defeq \set{f\in L^2(\mu)}{ \int f(u) \, \mu(\rd u)=0}$ and where $\norm{ K }_{L^{2}_{0,\mu} \to L^{2}_{0,\mu}}$ denotes the norm of the operator $K$, which we view as an element of the space of bounded linear operators from $L^{2}_{0,\mu}$ to itself.
An $L^2_{\mu}$-spectral gap, that is, $\gap_{\mu}(K)>0$, leads to desirable properties of a Markov chain $(X_n)_{n\in\mathbb{N}}$ with transition kernel $K$.
For instance, it implies a central limit theorem, see e.g.\ \cite[Section~22.5]{DoucEtAl2018} and the relevant references therein.
In particular, an explicit lower bound on $\gap_{\mu}(K)$ leads to an
estimate of the total variation distance and a mean squared error bound of the time average $\frac{1}{N} \sum_{n = 1}^{N} f(X_{n})$.
More precisely, it is well known, e.g.\ by virtue of \cite[Lemma~2 and Lemma~3]{novak2014computation}, that
\begin{equation}
	\label{eq:geometric_ergodic}
	\norm{ \xi K^{n} - \mu }_{\TV}
	\leq (1-\gap_{\mu}(K))^n
	\left \norm{ \frac{\rd \xi }{\rd \mu} -1 \right }_{L^2_\mu},
\end{equation}
where $\norm{ \xi K^n- \mu }_{\TV} \defeq  \sup_{A\in\Borel{\XX}} \vert \xi K^n(A) - \mu(A) \vert$ denotes the total variation distance between $\xi K^n= \Prob_{X_{n+1}}$ and $\mu$, $\xi\defeq \Prob_{X_1}$, and $\norm{ f }_{L^p_\mu}^p \defeq  \int_{\XX} \absval{ f }^p \, \rd \mu $ for $f\colon \XX \to \Reals$.
Furthermore, \citet{Ru12} shows that, for every $p>2$, there exists an explicit constant $c_p$ such that, for $f\in L^p(\mu)$ with $\norm{f}^{p}_{L^p_\mu}\leq 1$,
\[
	\ev{ \Absval{ \frac{1}{N} \sum_{n = 1}^{N} f(X_{n}) - \int_{\XX} f {\rd} \mu }^2 }
	\leq \frac{2}{N\cdot \gap_\mu(K)} +
	\frac{c_p 	\Norm{ \frac{\rd \xi }{\rd \mu} -1 }_{\infty}}{N^2\cdot\gap_\mu(K)^2} .
\]
There are also other non-asymptotic bounds that consider different convergence assumptions on the Markov chain and the underlying error criterion.
For details we refer to \citep{LaNi11,LaMiNi13,rudolf2015error,paulin2015concentration,Fan2021hoeffding} and the references therein.

In the following we briefly discuss two popular methods for the construction of a transition kernel $K$ that is reversible \wrt $\mu$.

\subsection{Metropolis--Hastings (MH) algorithms}

Probably the most popular method for constructing a transition kernel that is reversible \wrt $\mu$ is given by the \ac{MH} algorithm.
Given a Markov kernel $Q$ on $\XX$ which serves as a proposal mechanism, i.e.\ a `proposal kernel', and given a function $\alpha \colon \XX \times \XX \to [0,1]$ which provides acceptance probabilities and depends on $\mu$ and $Q$, a transition from a state $x$ to a state $y$ in the \ac{MH} algorithm proceeds as follows.

\begin{myAlgorithm}[Metropolis--Hastings]
\label{alg:Metropolis_Hastings}
	Let $Q$ be a proposal kernel and $\alpha \colon \XX \times \XX \to [0,1]$ be an acceptance probability function.
	Given the current state $x\in\XX$,
	one obtains the next state $y\in\XX$ as follows:
	\begin{enumerate}[label=(\arabic*)]
		\item Draw $X'\sim Q(x,\quark)$ and $U\sim \Uniform[(0,1)]$ independently and denote the realisations by $x'$ and $u$ respectively;
		\item If $u<\alpha(x,x')$, return $y \defeq x'$, otherwise return $y \defeq x$.
	\end{enumerate}
\end{myAlgorithm}
The algorithm above can be rewritten as a transition kernel:
\begin{equation}
\label{eq:MH_transition_kernel}
	K(x, \rd y)
	=
	\alpha(x,y) \PropKernel(x, \rd y) + r(x) \delta_x(\rd y),
	\qquad
	r(x) \defeq 1 - \int_{\XX} \alpha(x,y) \, \PropKernel(x, \rd y),
\end{equation}
where $\delta_x$ denotes the Dirac measure on $\XX$ at $x\in\XX$ and the function $r$ is called the `rejection probability'.
It remains to specify $\alpha$.
Let $\sigma^{+},\sigma^{-} \in \probson{\XX \times \XX}$ with
\begin{equation}
\label{eq:sigma_plus_minus}
\begin{aligned}
\sigma^{+} (\rd x, \rd x') & \defeq \PropKernel (x, \rd x') \mu (\rd x) , \\
\sigma^{-} (\rd x, \rd x') & \defeq \sigma^{+} (\rd x', \rd x) = \PropKernel (x', \rd x) \mu (\rd x'),
\end{aligned}
\end{equation}
and set
\begin{align*}
\alpha(x, x') & \defeq \min \left\{1 , \frac{\rd \sigma^{-}}{\rd \sigma^{+}} (x, x') \right\} .
\end{align*}
Then the transition kernel $K$ defined in \eqref{eq:MH_transition_kernel} is reversible \wrt $\mu$ \citep{Tierney1998}.
Of course, the Radon--Nikodym derivative $\frac{\rd \sigma^{-}}{\rd \sigma^{+}} (x, x')$ does not necessarily exist.
In the finite-dimensional Euclidean setting where $\XX = \Reals^{d}$, the derivative $\frac{\rd \sigma^{-}}{\rd \sigma^{+}}$ is often just the ratio of Lebesgue densities.
However, in infinite-dimensional spaces the absolute continuity $\sigma^{-}\ll\sigma^{+}$ requires the choice of a suitable proposal kernel $\PropKernel$.

\subsection{Slice sampling}

Suppose that a $\sigma$-finite reference measure $\mu_{0}$ on $\XX$ is given that satisfies $\mu \ll \mu_{0}$.
Additionally, we assume that the probability density function $\frac{\rd \mu}{\rd \mu_{0}}$ satisfies
\[
	\frac{\rd \mu}{\rd \mu_{0}}(x)
	\propto \exp( - \Phi(x)),
	\quad
	\mu_{0} \text{-a.e.\ }x\in\XX,
\]
for a measurable function $\Phi\colon \XX \to \Reals$ such that $\exp(-\Phi)$ is integrable \wrt $\mu_{0}$.
For some $s>0$, define
\begin{equation}
\label{eq:super_level_set}
\XX_s \defeq \set{ x\in\XX }{ \exp(-\Phi(x))\geq s },
\end{equation}
to be the super-level set of $\exp(-\Phi)$ to level $s$.
Let $\norm{ \exp(-\Phi)}_\infty \defeq  \mathop{\mu_0\text{-ess\,}\sup}\limits_{x\in\XX} \vert \exp(-\Phi(x))\vert$
and note that $\mu_{0}(\XX_s)\in (0,\infty)$ for $s\in(0,\norm{ \exp(-\Phi)}_\infty)$.
Here,
\[
	\mathop{\mu_0\textup{-ess\,sup}}\limits_{x\in\XX} f(x)
\]
denotes the essential supremum with respect to $\mu_0$ of $f \colon \XX \to \Reals$.
Define the probability measure $\mu_{0,s}\in\probson{\XX}$ by
\[
\mu_{0,s}(A) \defeq \frac{\mu_{0}(A\cap \XX_s)}{\mu_{0}(\XX_s)}, \quad A\in\Borel{\XX},
\]
that is, $\mu_{0,s}$ is the normalised restriction of $\mu_{0}$ to $\XX_s$.
In the idealised slice sampling algorithm, a transition from a state $x$ to a state $y$ proceeds as follows.

\begin{myAlgorithm}[Idealised slice sampling]
	\label{alg:ideal_ss}
Given the current state $x\in\XX$
one obtains the next state $y\in\XX$ as follows:
	\begin{enumerate}[label=(\arabic*)]
		\item Draw $S\sim\Uniform[(0,\exp(-\Phi(x)))]$ and let $s$ be the realisation.
		\item Draw $Y\sim \mu_{0,s}$ and let the state $y$ be the realisation of $Y$.
	\end{enumerate}
\end{myAlgorithm}
The corresponding transition kernel takes the form
\[
	K(x,\rd y) = \frac{1}{\exp(-\Phi(x))} \int_0^{\exp(-\Phi(x))} \mu_{0,s}(\rd y)\, \rd s,
\]
and it can be readily shown that $K$ is reversible \wrt $\mu$.

\begin{remark}
	In the case that $\XX=\Reals^d$ and $\mu_0$ is the Lebesgue measure, $\mu_{0,s}$ coincides with the uniform distribution on $\XX_s$.
	In that setting the corresponding method is known as simple (uniform) slice sampling and recent results \citep[Theorem~3.10]{Natarovskii2021} concerning the spectral gap indicate under which conditions robust convergence behaviour \wrt the dimension is present.
\end{remark}

However, the main issue with the idealised slice sampling algorithm is that the second step in \Cref{alg:ideal_ss} may be difficult to implement, because the implicit assumption of being able to draw samples from $\mu_{0,s}$ for an arbitrary level $s$ may be very restrictive.
Whenever one is not able to sample from a distribution exactly, one can try to use a suitable Markov chain step.
Namely, one substitutes the second step of \Cref{alg:ideal_ss} by performing a transition, depending on $x$ and $t$, which (at least) has stationary distribution $\mu_{0,s}$.
Such approaches are known as \emph{hybrid slice sampling} strategies; see e.g.\ \citep{latuszynski2014convergence} for some theory and comparison results.
There are several different methods in the literature that are feasible in finite-dimensional settings, and can --- from an algorithmic perspective --- be lifted to infinite-dimensional scenarios; see e.g.\ \citep{neal2003slice,Murray2010,li2020latent}.

\section{Two random walk-like \ac{MCMC} algorithms on $\Sphere^{d-1}$}
\label{sec:GeodesicZappa}

We describe now two \ac{MH} algorithms from the literature.
These algorithms use random walk-like proposals that are defined on general, finite-dimensional manifolds $\MM$.
We shall compare the algorithms described in \Cref{sec:reprojection_method} with these algorithms on a numerical example in \Cref{sec:numerical_illustrations}.
In the following paragraphs we first describe the corresponding \ac{MH} algorithm for a general manifold $\MM$ and then provide the particular algorithm for the case of sampling on a sphere $\MM = \Sphere^{d-1}$.
Throughout, we assume that $\MM$ admits a Hausdorff measure $\Hausdorff_{\MM}$ and that the target probability measure $\mu$ on $\MM$ is given by an unnormalised density $\rho$ \wrt $\Hausdorff_{\MM}$, that is,
\begin{equation}
	\label{equ:post_aux}
	\frac{\rd \mu}{\rd \Hausdorff_{\MM}}(\xx) \propto \rho(\xx), \qquad \xx \in \MM.
\end{equation}
When $\MM = \Sphere^{d-1}$ and $\mu$ is a posterior measure \wrt an \ac{ACG} prior $\mu_0 = \ACG(C)$ as in \eqref{eq:push_post},
\[
	\rho(\xx)
	=
	\frac{\exp(-\PPhi(\xx))}{\norm{ \xx }^d_C}.
\]

\paragraph{Geodesic random walk-\ac{MH} algorithm.}
Assume that $\Hausdorff_{\MM}(\MM)<\infty$.
Then the uniform measure $\Uniform(\MM)$ on $\MM$ is defined via $\Uniform(\MM)(A) \defeq \Hausdorff_{\MM}(A)/\Hausdorff_{\MM}(\MM)$ for $A \in \Borel{\MM}$.
The geodesic random walk as described in \citep{MangoubiSmith2018} yields a Markov chain $(\bar{X}_k)_{k\in\Naturals}$ on $\MM$ with $\Uniform(\MM)$ as its limit distribution.
For any $\xx \in \MM$, denote the tangent space at $\xx$ by $\mathcal{T}_{\xx}\MM$, and for any vector $v \in \mathcal{T}_{\xx}\MM$, denote by $\gamma_{\xx,v}$ the unique geodesic $\gamma_{\xx,v} \colon [0,\infty) \to \MM$ on $\MM$ that satisfies $\gamma_{\xx,v}(0) = \xx$ and $\gamma'_{\xx,v}(0) = v$, where $\gamma'$ denotes the first derivative.

Next, we present how a transition from a state $\xx$ to a state $\yy$ proceeds in the geodesic random walk algorithm.

\begin{myAlgorithm}[Geodesic random walk]
	Given the current state $\xx\in\MM$
	one obtains the next state $\yy\in\MM$ for fixed $t>0$ as follows:
	\begin{enumerate}[label=(\arabic*)]
		\item
		Draw from the uniform distribution
		on the unit sphere in $\mathcal{T}_{\xx}\MM$ and call the result $v$;

		\item
		Set $\yy = \gamma_{\xx, v}(t)$.
	\end{enumerate}
\end{myAlgorithm}

Hence, in order to sample approximately from $\mu$ as in \eqref{equ:post_aux}, we can employ the geodesic random walk kernel as a proposal kernel in an \ac{MH} algorithm.
The resulting acceptance probability $\alpha$ involves ratios of the unnormalised density $\rho$ in \eqref{equ:post_aux}.
\citet[Theorem~27]{goyal2019sampling} argue that the corresponding geodesic random walk transition kernel is reversible \wrt $\Uniform(\MM)$.
Therefore, the resulting algorithm for realising the Metropolised geodesic random walk on $\MM=\Sphere^{d-1}$ is given as in \Cref{alg:geodesic_MCMC}.

\begin{remark}\label{rem:Tx_ONB}
In order to sample from the uniform distribution on the unit sphere in $\mathcal{T}_{\xx}\MM$ we can use the tools outlined by \citet{Zappa2018} for the case of constrained $m$-dimensional manifolds embedded into $\Reals^d$, $d>m$, via
\begin{equation}\label{eq:M_constraint}
	\MM = \Set{x \in \Reals^d }{ q_i(x) =
	0 \quad \forall i = 1,\ldots,L }, \qquad L\geq d-m,
\end{equation}
where $q_i\colon \Reals \to \Reals$ are smooth functions.
In particular, an orthonormal basis (ONB) of $\mathcal{T}_{\xx}\MM$ can be  obtained by a QR decomposition of the Jacobian of $Q(x) \defeq  (q_1(x),\ldots,q_L(x))$ evaluated at $x=\xx$.
Let us denote such a basis by $u_1,\ldots, u_{m} \in \Reals^d$ and store the vectors in the columns of $U_{\xx} = [u_1 \mid \ldots \mid u_{m}] \in \Reals^{d \times m}$.
Drawing a sample $w$ from $\Uniform(\Sphere^{m-1})$ and setting $v = U_{\xx}w$ yields a sample according to the uniform distribution on the sphere in $\mathcal{T}_{\xx}\MM$.
If $\MM=\Sphere^{d-1}$, then there is only one constraint function, i.e.\ $q_1(x) = \norm{ x }^2 -1$.
Hence, the tangent space $\mathcal{T}_{\xx}\MM$ coincides with the orthogonal complement $\xx^\perp$ of $\sspan\{\xx\}$.
\end{remark}

\begin{algorithm}
  \caption{Geodesic random walk-\ac{MH} on $\Sphere^{d-1}$  \citep{MangoubiSmith2018}} \label{alg:geodesic_MCMC}
 \begin{algorithmic}[1]
  \STATE \textbf{Given:} time $t \in (0, \frac \pi2]$ and initial state $\xx_0 \in \Sphere^{d-1}$
  \FOR{$k \in\Naturals_0$}
	\STATE Compute ONB matrix $U_{{\xx}_k} \in \Reals^{d\times (d-1)}$ of $\xx_k^\perp$ according to \Cref{rem:Tx_ONB}
	\STATE Draw a sample $w_k$ from $\Uniform(\Sphere^{d-2})$ and set $v_k \defeq  U_{{\xx}_k}w_k$
	\STATE Set $\yy_{k+1} \defeq  \cos(t) \xx_k + \sin(t) v_k$
	\STATE Compute $a \defeq  \min\{1, \rho(\yy_{k+1}) / \rho(\xx_{k})\}$
	\STATE Draw a sample $u$ from $\Uniform([0,1])$
	\IF{$u\leq a$}
		\STATE Set $\xx_{k+1} = \yy_{k+1}$
	\ELSE
		\STATE Set $\xx_{k+1} = \xx_{k}$
	\ENDIF
  \ENDFOR
    \end{algorithmic}
\end{algorithm}

\begin{remark}[Ergodicity]
\label{rem:ergodicity}
In \citep[Theorem~7.1]{MangoubiSmith2018} it is shown that the geodesic random walk, i.e.\ the proposal kernel of \Cref{alg:geodesic_MCMC}, possesses a mixing time \wrt a Wasserstein distance that depends only on the positive curvature of the manifold $\MM$, given a suitably small integration time $t$.
Thus, in case of the sphere $\MM = \Sphere^{d-1}$, the mixing time is independent of the dimension $d$ for any $t \in (0, \frac \pi2]$.
However, this statement solely holds for the proposal chain.
As we will see in \Cref{sec:numerical_illustrations}, the \ac{MH} algorithm based on the geodesic random walk proposal shows deteriorating efficiency as $d\to\infty$.
Besides that, also the uniform ergodicity of \Cref{alg:geodesic_MCMC}, or the geodesic random walk itself, is not obvious and left open for future research.
\end{remark}

\paragraph{MH algorithm by \citet{Zappa2018}.}
The \ac{MH} algorithm presented in \citep{Zappa2018} shows some similarities with our method, in the sense that it first proposes a new state in the ambient Euclidean space which is then projected to the manifold $\MM$.
The manifold $\MM$ is assumed to be of the form \eqref{eq:M_constraint}.
Given a current state $\xx \in \MM$, we first draw a tangent vector $v \in \mathcal{T}_{\xx}\MM$, but this time \wrt an isotropic multivariate Gaussian measure $\Normal(0, s^2 I_{d-m})$.
Here, we can employ again the technique explained in \Cref{rem:Tx_ONB}, by using an ONB matrix $U_{\xx} \in \Reals^{d\times (d-m)}$ of $\mathcal{T}_{\xx}\MM$, drawing $w$ from $\Normal(0, s^2 I_{d-m})$, and defining the resulting sample $v= U_{\xx}w$.
We then consider $x \defeq \xx + v \in \Reals^d$, and project $x$ to some $\yy \in \MM$ by
\[
	\yy \defeq x + w
	\quad
	\text{ with suitable }
	\quad
	 w \in \left(\mathcal{T}_{\xx}\MM\right)^\perp.
\]
In general, computing $w$ and $\yy$ requires solving a nonlinear system;
see \citep[Equation~(2.5)]{Zappa2018}.
For $\MM = \Sphere^{d-1}$ the situation is rather easy: Since $\left(\mathcal{T}_{\xx}\MM\right)^\perp = \Span{\xx}$, $w = a \xx$ for some $a \in \Reals$ satisfying $\norm{\yy} = \norm{(1+a)\xx + v} = 1$.
Since $\xx \perp v$, if $\norm{ v }^2 \leq 1$, then
\[
	\yy = \sqrt{1 - \norm{ v }^2} \xx + v.
\]
If $\norm{ v }^2 > 1$, then $x$ cannot be projected back to the sphere along $\Span{\xx}$.
In this case, $x$ is rejected and the Markov chain remains at its current state, i.e.\ in the $k$th iteration it is realised as $\xx_{k+1} = \xx_k$.
In case of a successful proposal $\yy$ we still require a Metropolis step, where the correct acceptance probability for $\yy$ also requires the ingredients of the reverse move from $\yy$ to $\xx$.
That is, we require $\tilde{v} \in \mathcal{T}_{\yy}\MM$ such that
\[
	\xx = \yy + \tilde{v} + \tilde{w}, \qquad \tilde{w} \in (\mathcal{T}_{\yy}\MM)^\perp.
\]
The acceptance probability in the \ac{MH} algorithm targeting $\mu$ as in \eqref{equ:post_aux} is then given by
\[
	\alpha(\xx,\yy)
	=
	\min\left\{1, \frac{\rho(\yy) \, p(\yy,\tilde{v})}{\rho(\xx) \, p(\xx,v)} \right\},
\]
where $p(\yy,\tilde{v}) \propto \exp\left(-\frac{\norm{ U_{\yy}^\top \tilde{v}}^2}{2s^2} \right)$ denotes the proposal density for the tangential moves.
Since $U_{\yy}$ is orthonormal, $\norm{ U_{\yy}^\top \tilde{v}}=\norm{ \tilde{v}}$.
Moreover, for $\MM = \Sphere^{d-1}$, the vector $\tilde{v} \in \mathcal{T}_{\yy}\MM$ for going from $\yy = \sqrt{1 - \norm{ v }^2} \xx + v$ back to $\xx$ can be computed easily by projecting $\xx - \yy$ onto $\mathcal{T}_{\yy}\MM = (\yy)^\perp$, which yields
\[
	\tilde{v} = \xx - \yy - \innerprod{ \xx - \yy}{ \yy} \yy
	=
	\norm{ v }^2 \xx - \sqrt{1-\norm{ v }^2} v.
\]
In particular, since $\xx \perp v$, we obtain that $\norm{ \tilde{v} } = \norm{ v}$.
Hence, for $\MM = \Sphere^{d-1}$ the acceptance probability is just $\alpha(\xx,\yy) = \min\left\{1, \rho(\yy) / \rho(\xx)\right\}$.
We summarise the resulting \ac{MH} algorithm in \Cref{alg:Zappa_MCMC}.

\begin{algorithm}
  \caption{MH algorithm on $\Sphere^{d-1}$ \citep{Zappa2018}} \label{alg:Zappa_MCMC}
 \begin{algorithmic}[1]
  \STATE \textbf{Given}: step size $s>0$ and initial state $\xx_0 \in \Sphere^{d-1}$
  \FOR{$k \in\Naturals_0$}
	\STATE Compute ONB matrix $U_{{\xx}_k} \in \Reals^{d\times (d-1)}$ of $\xx_k^\perp$ according to \Cref{rem:Tx_ONB}
	\STATE Draw a sample $w_k$ of $\Normal(0, s^2 I_{d-1})$ and set $v_k \defeq  U_{{\xx}_k}w_k$
	\IF{$ \norm{ v } > 1$}
		\STATE Set $\xx_{k+1} = \xx_{k}$
	\ELSE
		\STATE Set $\yy_{k+1} \defeq   \sqrt{1 - \norm{ v }^2} \xx_k + v_k$
		\STATE Compute $a \defeq  \min\{1, \rho(\yy_{k+1}) / \rho(\xx_{k})\}$
		\STATE Draw a sample $u$ of $\Uniform[0,1]$
		\IF{$u\leq a$}
			\STATE Set $\xx_{k+1} = \yy_{k+1}$
		\ELSE
			\STATE Set $\xx_{k+1} = \xx_{k}$
		\ENDIF
	\ENDIF
  \ENDFOR
    \end{algorithmic}
\end{algorithm}

\begin{remark}[Ergodicity]
As stated in \citep[Section~2.1]{Zappa2018}, their proposed \ac{MH} algorithm yields uniform ergodicity for compact $\MM$ and continuous $\rho$.
This follows by standard arguments for \ac{MH} algorithms with continuous proposal densities bounded away from zero on compact state spaces, see, e.g.\ \cite[Example 15.3.2]{DoucEtAl2018}.
In particular, \Cref{alg:Zappa_MCMC} yields uniformly ergodic Markov chains under the conditions of \Cref{theo:Uniform_ergodic}.
\end{remark}

\section{Counterexample for the Markovianity of $T(X_k)$}
\label{sec:image_of_MC_is_not_MC}
    Let $\HH = \Reals^d$, $d>1$, and $\RadProjToSphere$ be the radial projection onto the unit sphere in $\HH$ as in \eqref{eq:radial_projection_map_T}, where we have fixed $\bar{z}=e_{d}=(0,\ldots,0,1)^\top$.
	Moreover, let $(X_n)_{n\in\Naturals}$ denote a Markov chain on $\HH$ with transition kernel $K$.

	If $(\RadProjToSphere (X_n))_{n\in\Naturals}$ were again a Markov chain, then we would have for the ``upper half'' of the sphere $H_d \defeq \set{ x = (x_1,\ldots,x_d)^\top \in \Sphere }{ x_d \geq 0 }$ that
	\[
		\Prob(\RadProjToSphere(X_2) \in H_d \ | \ \RadProjToSphere(X_1) = e_d)
		=
		\Prob(\RadProjToSphere(X_2) \in H_d \ | \ \RadProjToSphere(X_1) = e_d, \RadProjToSphere(X_0) = e_d),
	\]
	or, $\Prob(\RadProjToSphere(X_{n+2}) \in H_d\ | \ \RadProjToSphere(X_{n+1}) = e_d) = \Prob(\RadProjToSphere(X_{n+2}) \in H_d\ | \ \RadProjToSphere(X_{n+1}) = e_d, \RadProjToSphere(X_n) = e_d)$ for any $n\in\Naturals$.
	By definition of $\RadProjToSphere$, this is equivalent to
	\[
		\frac{\Prob(X_{2,d} > 0, X_{1,d} >0)}{\Prob(X_{1,d} > 0)}
		=
		\frac{\Prob(X_{2,d} > 0, X_{1,d} > 0, X_{0,d} > 0)}{\Prob(X_{1,d} > 0, X_{0,d} > 0)},
	\]
	where $X_n = (X_{n,1},\ldots,X_{n,d})^\top$ denotes the state vector of the Markov chain.
	Consider now a Markov chain $(X_n)_{n\in\Naturals}$ with initial distribution $X_0 \sim \Normal(0,I)$ and Gaussian random walk transition kernel $K(x, \quark) = \Normal(x, s^2 I)$ for a fixed step size $s > 0$.
	Then each component of the states is a Markov chain independent of the other components.
	Hence,
	\[
		\frac{\Prob(X_{2,d} > 0, X_{1,d} > 0)}{\Prob(X_{1,d} > 0)}
		=
		\frac{\Prob(W_0 + s W_1 + s W_2 > 0, W_0 + s W_1 >0 )}{\Prob(W_0 + s W_1 >0 )}
	\]
	for $W_0, W_1, W_2 \iidsim \Normal(0, 1)$ and, analogously,
	\[
		\frac{\Prob(X_{2,d} > 0, X_{1,d}, X_{0,d} > 0)}{\Prob(X_{1,d} > 0, X_{0,d} > 0)}
		=
		\frac{\Prob(W_0 + s W_1 + s W_2 > 0, W_0 + s W_1 >0, W_0 > 0 )}{\Prob(W_0 + s W_1 > 0, W_0 >0 )}.
	\]
	The expressions for these probabilities are difficult to evaluate exactly.
	Therefore, we perform a Monte Carlo integration using $10^8$ samples and obtain the following estimates for $s=1$:
	\begin{equation*}
	\begin{aligned}
		\frac{\Prob(W_0 + s W_1 + s W_2 > 0, W_0 + s W_1 >0 )}{\Prob(W_0 + s W_1 >0 )}
		& \approx
		0.8041,
		\\
		\frac{\Prob(W_0 + s W_1 + s W_2 > 0, W_0 + s W_1 >0, W_0 > 0 )}{\Prob(W_0 + s W_1 > 0, W_0 >0 )}
		& \approx
		0.8333.
		\end{aligned}
	\end{equation*}
	We now show that the above observation is not restricted to the random walk transition kernel.
	Consider a stationary Markov chain generated by the \ac{pCN} proposal kernel $Q(x, \quark) = \Normal(\sqrt{1-s^2}x, s^2 I)$ and initial distribution $X_0 \sim \Normal(0,I)$.
	We again obtain independent scalar Markov chains in each component:
	\[
		\frac{\Prob(X_{2,d} > 0, X_{1,d} > 0)}{\Prob(X_{1,d} > 0)}
		=
		\frac{\Prob( (1-s^2)W_0 + s\sqrt{1-s^2} W_1 + s W_2 > 0, \sqrt{1-s^2}W_0 + s W_1 >0 )}{\Prob(\sqrt{1-s^2}W_0 + s W_1 >0 )}
	\]
	as well as
	\begin{align*}
		& \frac{\Prob(X_{2,d} > 0, X_{1,d}, X_{0,d} > 0)}{\Prob(X_{1,d} > 0, X_{0,d} > 0)} \\
		& \quad =
		\frac{\Prob( (1-s^2)W_0 + s\sqrt{1-s^2} W_1 + s W_2 > 0, \sqrt{1-s^2}W_0 + s W_1 >0, W_0 > 0 )}{\Prob(\sqrt{1-s^2}W_0  + s W_1 > 0, W_0 >0 )},
	\end{align*}
	where $W_0, W_1,W_2 \iidsim \Normal(0, 1)$.
	Monte Carlo integration with the stationary \ac{pCN}-generated Markov chain, $s = 0.5$, and $10^8$ samples yields
	\[
		\frac{\Prob(X_{2,d} > 0, X_{1,d} > 0)}{\Prob(X_{1,d} > 0)}
		\approx
		0.8333
		,\qquad
		\frac{\Prob(X_{2,d} > 0, X_{1,d}, X_{0,d} > 0)}{\Prob(X_{1,d} > 0, X_{0,d} > 0)}
		\approx
		0.8620.
	\]
	Thus, these Markov chains serve as counterexamples to the claim that $(\RadProjToSphere(X_n))_{n\in\Naturals}$ is again a Markov chain.

\end{document}